\newtheorem{thm}{Theorem}[section]
\newtheorem{lemma}[thm]{Lemma}
\newtheorem{prop}[thm]{Proposition}
\newtheorem{cor}[thm]{Corollary}
\theoremstyle{definition}
\newtheorem{defi}[thm]{Definition}
\newtheorem{ex}[thm]{Example}
\newtheorem{rmk}[thm]{Remark}
\DeclareMathOperator{\R}{\mathbf R}
\DeclareMathOperator{\C}{\mathbf C}
\DeclareMathOperator{\N}{\mathbf N}
\DeclareMathOperator{\Z}{\mathbf Z}
\DeclareMathOperator{\id}{id}
\DeclareMathOperator{\Int}{Int}
\DeclareMathOperator{\Ima}{Im}
\DeclareMathOperator{\st}{st}
\DeclareMathOperator{\lk}{lk}
\DeclareMathOperator{\eps}{\epsilon}
\DeclareMathOperator{\wR}{\tilde \R}
\title{Real Milnor Fibres and Puiseux Series}
\author{Goulwen Fichou and Masahiro Shiota}
\thanks{We thank M. Takase for his advice on knots}
\address{IRMAR (UMR 6625), Universit\'e de Rennes 1, Campus de
  Beaulieu, 35042 Rennes Cedex, France}
\address{Graduate School of Mathematics, Nagoya University, Chikusa, Nagoya, 
464-8602, Japan}
\date\today
\subjclass[2010]{14P15}
\begin{document}
\begin{abstract} Given a real polynomial function and a point in its zero locus, we defined a set consisting of algebraic real Puiseux series naturally attached to these data. We prove that this set determines the topology and the geometry of the real Milnor fibre of the function at this point. To achieve this goal, we balance between the tameness properties of this set of Puiseux series, considered as a real algebraic object over the field of algebraic Puiseux series, and its behaviour as an infinite dimensional object over the real numbers.
\end{abstract}
\maketitle


Let $\R$ denote the field of real numbers, and $\N$ denote the non-negative integers. Denote by $\wR$ the field of continuous semialgebraic curve germs $\gamma : (0,\epsilon) \to \R$, which we identify with the field $\wR=\R_{alg} ((t^{\mathbb Q}))$  of algebraic Puiseux series over $\R$ (cf. \cite{BCR}). Recall that the subring of algebraic Puiseux series of the form $\sum_{i\in\N}a_i t^{i/p}$ is carried to the subring of continuous semialgebraic curve germs $[0,\,\infty)\to\R$ at 0.

Let $f$ be a polynomial function on $\R^n$, with $n\in \N^*$, and denote by $\tilde f :\wR^n \to \wR$ the extension of $f$ defined by $\tilde f (\gamma (t))=f\circ \gamma (t)$ for $\gamma \in \wR$ an algebraic Puiseux series. Let $x_0\in \R^n$ be a vanishing point for $f$. The object of study of the present paper is the subset $\mathcal F_{f,x_0} \subset \wR^n$ of continuous semialgebraic curve germs $\gamma :(\R,0) \to (\R^n,x_0)$ such that $f\circ \gamma (t)=t$, namely
$$\mathcal F_{f,x_0}=\{\gamma \in \wR^n:~~ \gamma (0)=x_0,~~ f\circ \gamma (t) =t\}.$$
We aim to relate the topology and the geometry of $\mathcal F_{f,x_0}$ to the topology and the geometry of the Milnor fibre associated with the real polynomial function $f$ at $x_0$, which can be described as the semialgebraic set
$$F_{r,a}(f)=\{x\in \R^n: ~~|x-x_0|<r,~~f(x)=a\}$$
for $0<a \ll r\ll 1$.

\vskip 5mm

The local study of the singular points of complex hypersurfaces has a rich story initiated by J. Milnor in its fundamental book \cite{FMilnor}, where he established the so-called Milnor fibration Theorem. More recently, the development of Motivic Integration \cite{DL} has brought a new enlightenment  on the subject, via the motivic zeta function introduced by J. Denef and F. Loeser, together with the analytic Milnor fibre defined by J. Nicaise and J. Sebag \cite{NS}. Even more recently, E. Hrushovski and F. Loeser have established a direct connection between the analytic Milnor fibre and the topological Milnor fibre, together with the motivic Milnor fibre \cite{HL}, passing through the integration into valued fields developped by E. Hrushovski and D. Kazhdan \cite{HK}.

In the real context, the action of the monodromy operator on the Milnor fibre disappears, which gives rise to the notion of positive and negative Milnor fibres, as study for example by C. McCrory and A. Parusi\'nski \cite{MCP}. However the motivic counterpart, initiated by G. Comte and G. Fichou \cite{CF} and Y. Yin \cite{Y}, does not provide a full understanding of the global feature. Our motivation to consider the set $\mathcal F_{f,x_0}$ is to study a naive real version of the analytic Milnor fibre. Looking at points, we obtain a set of real algebraic Puiseux series, which can almost (or more precisely weakly, see below) be considered as a classical semialgebraic object in real algebraic geometry, replacing the field of real numbers by the real closed field of algebraic Puiseux series over $\R$. 
Note that, if the definition of $\mathcal F_{f,x_0}$ makes sense for any continuous semialgebraic function $f$,
however $\mathcal F_{f,x_0}$
is not necessarily a semialgebraic subset of $\wR^n$ due to the condition that the arcs have origin in $x_0\in \R^n$. This condition can be described by a valuation condition, saying that we consider those arcs with strictly positive valuation after the translation by $\gamma \mapsto \gamma-x_0$. Such sets are sometimes called $T$-convex, or weakly o-minimal \cite{MMS}.

\vskip 5mm

Our aim in this paper is to show that the topology and the geometry of $\mathcal F_{f,x_0}$, which is definitively a natural and intrinsic object in real geometry, determine the topology and the geometry of the (positive) real Milnor fibre of $f$ at $x_0$, which can be considered as a semialgebraic set, but only well-defined up to the choice of (sufficiently small) constants. Note that a similar result holds for the negative Milnor fibre, a negative sign in front of $t$ being necessary in the definition of $\mathcal F_{f,x_0}$ in that case. We prove also that some natural homology groups of $\mathcal F_{f,x_0}$ coincide with the classical homology groups of the (positive) real Milnor fibre. The main achievement of the paper states that $\mathcal F_{f,x_0}$ completely determines the (positive) real Milnor fibre up to semialgebraic homeomorphism (cf Theorem \ref{main}.(1)), and determines it up to Nash diffeomorphism in dimension different from 5 and 6 (cf Theorem \ref{main}.(2)).

\vskip 5mm

As a real closed field, $\wR$ is naturally equipped with the ordered topology which coincides with the $t$-adic topology. But we may also regard $\wR$ as a $\R$-vector subspace of $\R^{\mathbb Q}=\prod_{r\in \mathbb Q}\R t^r$, and consider $\R^{\mathbb Q}$ as a topological space with the product topology. Then $\wR$ may also be equipped with the induced topology.

In the paper, we balance between the tameness properties of $\mathcal F_{f,x_0}$ as a subset of $\wR^n$ close to be semialgebraic, and its behaviour as a topological set of infinite dimension in $\R^{\mathbb Q}$. This situation leads to the study in the first part of the paper of the notion of weak continuity, and its relationship with the continuity of semialgebraic maps defined over $\wR$. We discuss in part 2 the associated homology theories, preparing the material for the comparison of homologies given in the third part as Theorem \ref{thm-hom}. In the fourth part, we focus on the semialgebraic characterisation of the real Milnor fibre, using notion form piecewise linear topology \cite{PL}. The last part is dedicated to the Nash characterisation, using the theory of topological microbundles of J. Milnor \cite{Milnor}. 

Note that all the results of the paper works verbatim for a Nash function in place of a polynomial function $f$, contrary to the analytic case for the reason that a globally subanalytic triviality theorem, analog to the Nash triviality theorem in \cite{CS2}, is not yet available.

\vskip 5mm

To distinguish an interval $(a,\,b)$ in $\R$ and in $\tilde\R$, we write $(a,\,b)$ in $\tilde\R$ as $(a,\,b)_{\tilde\R}$. 
A semialgebraic set is a semialgebraic set over $\R$ and an $\tilde\R$-semialgebraic set is a semialgebraic over $\tilde\R$. 
For a semialgebraic set $X\subset \R^n$, denote by $\tilde {X}$ the set of continuous semialgebraic germs at
$0\in \R$ of continuous semialgebraic functions from $(0;\infty)$ to
$X$. For $x \in X$, we denote $\tilde x\in \tilde X$ the germ of the constant
function equal to $x$. Let $X$ and $Y$ be semialgebraic sets and $h: X \to Y$ be a continuous semialgebraic map. Let $\tilde h:\wR
  \to \wR$ be defined by $\tilde h (\gamma (t))=h \circ \gamma
  (t)$ for $\gamma \in \wR$. We know from \cite{FS} that $\tilde h$ is continuous for the $t$-adic topology, but not necessarily for the product topology.

We define $\mathfrak m_+\subset \wR$ to be the set of infinitely small positive elements in $\wR$, namely:
$$\mathfrak m_+=\{\gamma \in \wR:~~0<\gamma < \tilde x \text{~for all~}x\in (0,+\infty)_{\R}\}.$$



\section{Weak continuity} 

A {\it Nash manifold} is a semialgebraic $C^\infty$ submanifold of some $\R^n$ and a {\it Nash map} between Nash manifolds is a $C^\infty$ map with semialgebraic graph.
The Milnor fibre $F_f(r,a)$ as considered in the introduction is a Nash manifold with boundary, however the set $\mathcal F_{f,x_0}$ is not so, even by changing $\R$ with the field $\tilde\R$ of algebraic Puiseux series over $\R$. We will regard $\mathcal F_f$ as a local Nash manifold, see definition \ref{def-loc} below.
Properties of semialgebraic sets, Nash manifolds and Nash maps are explained in \cite{BCR} and \cite{Shiota}.
We will recall some of them for the convenience of the reader.

  For any ordered field $R$, we give a topology on $R$ by open intervals, and we called it the $R$-topology. We denote by $(a,\,b)_R$ the open interval defined by $a$ and $b$ in $R$, and called it an $R$-interval in order to emphasise again the dependence on $R$ (if $R$ is not the real numbers). 
In the same way as in the real number case, we define $R$-polynomial functions on $R^n$, $R$-algebraic sets in $R^n$, $R$-semialgebraic sets in $R^n$ and $R$-continuous maps from $R^n$ to $R^m$.

If $R$ is a real closed field, for example $\tilde\R$, we call a map $\phi:R\to R$ of {\it class} $R$-$C^1$ if, for every $x_0\in R$, the difference quotient
$$\frac{\phi(x+x_0)-\phi(x_0)}{x}$$ 
converges in $R$ as $x$ tends to $0$ in $R$.
In the same way we define an $R$-$C^k$ map $R^n\to R^m$, an $R$-Nash manifold and an $R$-Nash map.
Note that an $R$-Nash manifold $M$ admits a finite system of $R$-Nash coordinate neighbourhoods of the form $R^n\to M$.

\vskip 5mm

Consider the case $R=\tilde\R$. Then the set $\mathfrak m_+\subset \wR$ of infinitely small positive elements is open but not $\tilde\R$-semialgebraic.

\begin{defi}\label{def-loc}
A {\it local $\tilde\R$-Nash manifold} is a subset of an $\tilde\R$-Nash manifold $\mathcal M$ of the form $\phi ^{-1}(\mathfrak m_+)$ for some positive $\tilde\R$-Nash function $\phi$ on $\mathcal M$ (note that an $\tilde\R$-Nash manifold is a local $\tilde\R$-Nash manifold).
A {\it local $\tilde\R$-semialgebraic $\tilde\R$-continuous map} $\mathcal M_1\to \mathcal M_2$ between local $\tilde\R$-Nash manifolds is the restriction to $\mathcal M_1$ of an $\tilde\R$-semialgebraic $\tilde\R$-continuous map between the ambient $\tilde\R$-Nash manifolds.
\end{defi}

\begin{ex}\label{ex-f} Let $f$ be a polynomial function on $\R^n$. Then the set
$$\mathcal A_f=\{\gamma\in\tilde\R^n:\tilde f(\gamma)=t\}$$
is a non-singular $\tilde\R$-algebraic set and hence an $\tilde\R$-Nash manifold because the critical value set of $f$ is finite, and therefore there is no non-real critical value of $\tilde f$. Moreover
$$\{\gamma\in \mathcal A_f:\gamma(0)=0\}=\phi^{-1}(\mathfrak m_+),$$
where $\phi:\mathcal A_f\to\tilde\R$ is defined by $\phi(\gamma)=|\gamma|$, is a local $\tilde\R$-Nash manifold.
Such a set is sometimes called $T$-convex or definable in a weakly o-minimal structure \cite{MMS}.
\end{ex}

\subsection{Topology on $\wR$}
The $\tilde\R$-topology on $\tilde\R$ has many good properties when we treat $\tilde\R$ as an abstract real closed field.
However, this is not the case in this paper and for instance, the topology on $\R$ induced from the $\tilde\R$-topology on $\tilde\R$ is discrete.
We need to introduce another topology on $\tilde \R$, called {\it the product topology}.

Describe an element $\gamma \in \tilde \R$ as
$$\gamma (t)=\sum_{i\geq p} a_i t^{i/q},~~~ a_i \in \R ,~ (p,q) \in \Z\times \N^*$$
and regard $\wR$ as a vector subspace of $\Pi_{r\in\mathbf Q}\R t^r$ by the correspondence 
$$\gamma \to (...,a_i t^{i/q},...) \in \cdots \times \R t^{i/q} \times \cdots$$
Give the product topology to $\Pi_{r\in\mathbf Q}\R t^r$ and the induced topology on $\tilde\R$.
For an $\tilde\R$-Nash manifold $\mathcal M$ included in $\tilde\R^n$, we give to $\mathcal M$ the induced topology.

Then there are better relations between the topologies on $\tilde\R$ and $\R$, but nevertheless there are not enough many continuous maps.
Indeed, an $\tilde\R$-Nash map is not necessary continuous in this topology.
For example, the map $\tilde\R\ni\gamma\to\gamma^2\in\tilde\R$ is not continuous. Actually, for $\gamma_l=l t^{1/l}+t^{1-1/l}$, with $l\in \N^*$, we have $\gamma_l\to0$ because when we fix a finite number of exponents, the corresponding coefficients are equal to zero for $l$ big enough, whereas $\gamma^2_l\to\infty$ as $l\to\infty$ since the coefficient of $t$ is equal to $2l$.

We need to introduce a weaker notion of continuity. Actually, we introduce two kinds of weaker continuity, which happen to be equivalent to each other in the semialgebraic context (cf. Proposition \ref{prop-eq}). Our main result, Theorem \ref{main}.(2), holds for continuity in this topology.

 For $p\in\Z$ and $q\in\N^*$, let $\tilde\R_{p,q}$ denote the subset of $\tilde\R$ consisting of all Puiseux series of the form $\sum_{i\ge p}a_i t^{i/q}$.

\begin{defi}
We call a map $\phi:\tilde\R^n\to\tilde\R^m$ {\it weakly continuous} if 
$$\phi_{|{\tilde\R_{p,q}^n}}:\tilde\R_{p,q}^n\to\tilde\R^m$$
is continuous with respect to the induced topology on $\tilde\R_{p,q}^n$ and the product topology on $\tilde\R^m$ for any $(p,q)\in \Z\times \N^*$.

We call $\phi$ {\it finitely continuous} if $\phi_{|{\mathcal D}}:\mathcal D\to\tilde\R^m$ is continuous for any finite dimensional $\R$-linear subspace $\mathcal D$ of $\tilde\R^n$.
\end{defi}

Note that this definition is somehow natural from a singularity theory point of view, thinking for example that a $C^{\infty}$-map converges if the map, together with sufficiently enough derivatives, converge.

\begin{ex} A $\tilde\R$-polynomial map is an example of a weakly and finitely continuous map.
\end{ex}

\begin{lemma}\label{lem-b} Let $\phi: \wR^n \longrightarrow \wR$ be a $\wR$-semialgebraic $\wR$-continuous map. Then for all $(p,q) \in \Z \times \N^*$, there exists $(r,s) \in \Z \times \N^*$ such that
$$\phi(\wR^n_{p,q}) \subset \wR_{r,s}.$$
\end{lemma}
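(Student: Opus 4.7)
My plan is to combine $\wR$-semialgebraic cell decomposition (to reduce $\phi$ to roots of polynomials over $\wR$) with the Newton--Puiseux theorem (to bound the denominators of the Puiseux exponents of $\phi(\gamma)$) and a Lojasiewicz-type polynomial growth estimate (to bound the valuation $v(\phi(\gamma))$ from below).

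First I would apply cell decomposition over $\wR$ to obtain a finite partition $\wR^n = S_1 \sqcup \cdots \sqcup S_m$ together with polynomials $P_j(X,Y) = \sum_{k=0}^{d_j} c_{j,k}(X)\, Y^k \in \wR[X,Y]$, each with leading coefficient non-vanishing on $S_j$, such that $P_j(\gamma, \phi(\gamma)) = 0$ for every $\gamma \in S_j$. Only finitely many Puiseux series occur as coefficients of monomials in the $P_j$'s, so I pick $N \in \N^*$ so that all of them lie in $\R_{alg}((t^{1/N}))$. Fixing $(p,q)$ and setting $L := \mathrm{lcm}(N, q)$ and $D := \max_j d_j$, for every $\gamma \in \wR^n_{p,q}$ the substituted polynomial $P_j(\gamma, Y)$ has coefficients in $\R_{alg}((t^{1/L}))$, and by Newton--Puiseux its roots lie in $\R_{alg}((t^{1/s}))$ with $s := L \cdot D!$. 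This already yields the denominator bound for the Puiseux exponents of $\phi(\gamma)$, uniform in the cell containing $\gamma$. For the valuation, I would invoke the classical Lojasiewicz polynomial growth estimate $|\phi(x)| \le C(1+|x|)^{D_0}$ (valid over $\R$ for continuous semialgebraic functions and transferred to $\wR$ by Tarski--Seidenberg, since both fields are real closed and hence elementarily equivalent); as $v(|\gamma|) = \min_i v(\gamma_i) \ge p/q$ on $\wR^n_{p,q}$, taking $t$-adic valuations gives $v(\phi(\gamma)) \ge v(C) + D_0 \min(0, p/q) =: \tau$, and setting $r := \lfloor s\tau \rfloor$ yields $\phi(\gamma) \in \wR_{r,s}$.

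The step I expect to be the main obstacle is the valuation lower bound: as $\gamma$ ranges over $\wR^n_{p,q}$, the leading coefficient $c_{j,d_j}(\gamma)$ can have arbitrarily large valuation, so the Newton polygon of $P_j(\gamma, Y)$ alone admits roots of arbitrarily negative valuation, and one really needs the global $\wR$-continuity of $\phi$ (through the Lojasiewicz bound) to ensure that the specific root $\phi(\gamma)$ stays bounded in valuation throughout $\wR^n_{p,q}$. By contrast, the denominator bound from Newton--Puiseux is essentially a formal consequence of the cell decomposition.
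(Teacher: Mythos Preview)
Your proof is correct and shares the paper's two-step strategy: bound the Puiseux denominator via an algebraic relation, then bound the valuation from below via $\wR$-continuity. The paper's execution is lighter on both fronts. For the valuation bound---which you flag as the main obstacle---the paper simply notes that $\wR^n_{p,q}$ is order-bounded in $\wR^n$ (e.g.\ by $t^{(p-1)/q}$), hence contained in a closed $\wR$-ball on which the globally defined $\wR$-continuous $\wR$-semialgebraic $\phi$ is bounded by some $B\in\wR$; then $|\phi(\gamma)|\le B$ forces $v(\phi(\gamma))\ge v(B)$, and no Lojasiewicz estimate is needed. For the denominator bound, the paper dispenses with cell decomposition and Newton--Puiseux: it takes a single $P\in\wR[X_1,\dots,X_{n+1}]$ with $P(\gamma,\phi(\gamma))=0$, lets $s$ bound the number of $\tilde\C$-roots of $P(\gamma,X)$, and argues that a root $\phi(\gamma)$ genuinely requiring a denominator $k>s$ would, by Galois conjugation, yield more than $s$ roots. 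Your route through Newton--Puiseux is in fact more careful here, since your $L=\operatorname{lcm}(N,q)$ explicitly tracks the common denominator of the substituted coefficients, a point the paper's conjugate count glosses over. In either approach the cell decomposition is unnecessary: a single polynomial relation suffices, as Newton--Puiseux only requires $P(\gamma,Y)$ to be non-zero, not to have invertible leading coefficient.
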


\begin{proof} Note first that $\wR_{p,q}$ is bounded, so that the image of $\wR^n_{p,q}$ by the $\wR$-continuous function $\phi$ is bounded in $\wR$.
Moreover, there exists a polynomial $P\in \wR[X_1,\ldots,X_{n+1}]$ such that $P(\gamma,\phi(\gamma))=0$ for any $\gamma \in \wR^n$ since $\phi$ is $\wR$-semialgebraic. Let $\tilde \C$ denote the algebraic closure of $\wR$, that is the complex algebraic Puiseux series field. For a fixed $\gamma \in \wR^n$, the number of $\tilde \C$-solutions of $P(\gamma, X)=0$ is finite, and we can choose $s\in \N$ so that this number is less than equal to $s$ for any choice of $\gamma \in \wR^n$.
We are going to prove that
$$\phi(\wR^n_{p,q}) \subset \cup_{r\in \Z} \wR_{r,s}.$$
If not, there exist $k>s$, $p_0\in \Z$ and $\gamma \in \wR^n_{p,q}$ such that the equation $P(\gamma,X)=0$ admits a solution $\delta$ of the form
$$\delta \in \wR_{p_0,k} \setminus \cup_{i=1}^{k-1} \wR_{p_0,i}.$$
But in that case the number of $\tilde \C$-roots of $P(\gamma,X)$ is strictly greater than $s$.

Finally, we conclude that there exists $r\in \Z$ such that $\phi(\wR^n_{p,q}) \subset \wR_{r,s}$ using the boundedness of $\phi(\wR^n_{p,q})$.
\end{proof}

\begin{prop}\label{prop-eq} Let $\phi: \wR^n \longrightarrow \wR^m$ be a $\wR$-semialgebraic weakly continuous map. Then $\phi$ is linearly continuous if and only if $\phi$ is weakly continuous.
\end{prop}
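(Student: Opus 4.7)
\emph{Proof plan.} The easy direction, that weak continuity implies linear (finite) continuity, goes as follows. Any finite-dimensional $\R$-linear subspace $\mathcal D \subset \wR^n$ is spanned by finitely many Puiseux series $\gamma_1,\dots,\gamma_k$, each lying in some $\wR^n_{p_i,q_i}$. Setting $p=\min_i p_i$ and $q=\operatorname{lcm}(q_i)$ gives $\mathcal D \subset \wR^n_{p,q}$, and on the finite-dimensional space $\mathcal D$ the topology induced from the product topology on $\wR^n_{p,q}$ coincides with the standard Euclidean topology (one recovers the coordinates $\lambda_1,\dots,\lambda_k$ by solving a linear system in sufficiently many coefficient projections). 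Hence weak continuity of $\phi$ on $\wR^n_{p,q}$ restricts to continuity on $\mathcal D$.

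For the reverse direction, fix $(p,q)\in\Z\times\N^*$. By Lemma~\ref{lem-b} there exist $(r,s)$ with $\phi(\wR^n_{p,q})\subset\wR^m_{r,s}$, so writing $\phi(\gamma)=\sum_{j\geq r}c_j(\gamma)\,t^{j/s}$ it suffices to prove that each coordinate functional $c_j\colon\wR^n_{p,q}\to\R^m$ is continuous in the product topology. Since $\wR^n_{p,q}\simeq\prod_{i\geq p}\R^n$ is metrizable, sequential continuity is enough.

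The key structural step I would try to establish is a uniform finite determinacy lemma: for each $L\geq r$, there exists $N\geq p$ such that for any $\gamma,\gamma'\in\wR^n_{p,q}$ agreeing on all coefficients of exponent $\leq N/q$, the outputs $\phi(\gamma)$ and $\phi(\gamma')$ agree on all coefficients of exponent $\leq L/s$. To prove this, I would fix, as in the proof of Lemma~\ref{lem-b}, a polynomial $P\in\wR[X_1,\dots,X_n,Y_1,\dots,Y_m]$ vanishing on the graph of $\phi$, and propagate coefficients via a Newton-Puiseux style analysis, using the uniform bound $s$ on output denominators together with $\wR$-continuity to consistently track the selected branch of roots.

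Given the lemma, let $\mathcal D_N\subset\wR^n_{p,q}$ be the finite-dimensional $\R$-span of the monomials $t^{i/q}$ with $p\leq i\leq N$ (in $n$ coordinates), and let $\tau_N\colon\wR^n_{p,q}\to\mathcal D_N$ be the truncation map, which is continuous since it is a projection onto finitely many coordinates. Uniform finite determinacy then gives $c_j\circ\phi=c_j\circ(\phi|_{\mathcal D_N})\circ\tau_N$ for every $j\leq L$; the factor $\phi|_{\mathcal D_N}$ is continuous by the finite continuity hypothesis, so the composition is continuous. Letting $L\to\infty$ yields continuity of every coefficient, hence the desired weak continuity. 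The main obstacle in this strategy is the uniform finite determinacy step, where the control of the propagation of coefficients must be maintained as $\gamma$ approaches the loci where the defining polynomial $P$ becomes singular in $Y$.
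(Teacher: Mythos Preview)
Your overall architecture matches the paper's: the easy direction via the observation that any finite-dimensional $\R$-subspace sits inside some $\wR^n_{p,q}$, and the hard direction via Lemma~\ref{lem-b}, a finite-determinacy step, and then finite continuity applied to truncations.

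The difference---and the gap you yourself flag---lies in how finite determinacy is obtained. Your proposed Newton--Puiseux propagation through a defining polynomial $P$ is delicate precisely where you say it is (near the $Y$-singular locus of $P$), and you have not carried it out. The paper bypasses this entirely. Since $\wR^n_{p,q}$ is bounded in $\wR^n$ and $\phi$ is $\wR$-continuous (this is part of the standing hypothesis; it is needed already to invoke Lemma~\ref{lem-b}), $\phi$ is \emph{uniformly} $\wR$-continuous on $\wR^n_{p,q}$: for each $k\in\N$ there exists $l\in\N$ with $\phi(\gamma+[-t^l,t^l]^n)\subset\phi(\gamma)+[-t^k,t^k]$ for every $\gamma\in\wR^n_{p,q}$. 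But within $\wR^n_{p,q}$ the condition $\gamma'-\gamma\in[-t^l,t^l]^n$ says exactly that $\gamma$ and $\gamma'$ share all coefficients of exponent below $l$, and the conclusion says that the outputs share all coefficients of exponent below $k$. This \emph{is} your uniform finite-determinacy lemma, obtained in one line with no analysis of $P$. From there your argument---factor each output coefficient $c_j$ through a truncation $\tau_N$ onto a finite-dimensional $\R$-space and apply finite continuity---is exactly what the paper does.
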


\begin{proof} It is sufficient to prove the case $m=1$.

Assume $\phi$ is weakly continuous. Any finite set of points in $\wR^n$ is included in some $\wR_{p,q}^n$, and the $\R$-linear space generated by those vectors is also included in $\wR_{p,q}^n$, since $\wR_{p,q}^n$ is also a $\R$-linear space. As a consequence $\phi$ is finitely continuous.

Assume now that $\phi$ is finitely continuous, and take $(p,q)\in \Z \times \N^*$. Note that, as a $\wR$-continuous $\wR$-semialgebraic map on the bounded set $\wR^n_{p,q}$, the function $\phi$ is uniformly $\wR$-continuous, namely 
$$\forall k\in \N, \exists l\in \N, \forall \gamma \in \wR^n_{p,q},~~\phi(\gamma +[-t^l,t^l])\subset \phi(\gamma)+[-t^k,t^k].$$
Moreover, there exists $(r,s) \in \Z \times \N^*$ such that
$$\phi(\wR^n_{p,q}) \subset \wR_{r,s}$$
by Lemma \ref{lem-b}.
In particular, for any $j\geq r$, the $t^{j/s}$-coefficient of the image $\phi(\gamma)$ in $\wR$ of a series $\gamma \in \wR^n_{p,q}$ is decided by a finite number of coefficients of $\gamma$. 
More precisely, if we denote by $\sum_{i\geq p}a_it^{i/q}$, with $a_i\in \R^n$, the elements of $\wR^n_{p,q}$ and by $\sum_{i\geq r}b_it^{i/s}$, with $b_i\in \R$, the elements of $\wR_{r,s}$, we can consider $\phi$ as a map
$$(a_1,a_2,\ldots)\mapsto (b_1,b_2,\ldots).$$
Then, for any $j\geq r$ there exists $i_0\in \N$ such that the map $\phi$ induces a map
$$(a_1,a_2,\ldots, a_{i_0})\mapsto b_j$$
which is continuous by finite continuity of $\phi$. As a consequence, $\phi$ is continuous on $\wR^n_{p,q}$, and so $\phi$ is weakly continuous.
\end{proof}

\begin{lemma}\label{lem-wcc} Let $\phi: \wR^n \longrightarrow \wR$ be a $\wR$-semialgebraic map. If $\phi$ is weakly continuous, then $\phi$ is $\wR$-continuous.
\end{lemma}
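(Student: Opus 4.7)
The plan is to argue by contradiction. Suppose $\phi$ is not $\wR$-continuous at some point $\gamma_0\in\wR^n$; I want to produce a violation of weak continuity. Because the graph $G(\phi)\subset\wR^n\times\wR$ is $\wR$-semialgebraic, discontinuity at $\gamma_0$ is equivalent to the $\wR$-closure of $G(\phi)$ containing some point $(\gamma_0,y_1)$ with $y_1\neq\phi(\gamma_0)$. The $\wR$-semialgebraic curve selection lemma then yields a $\wR$-semialgebraic $\wR$-continuous map $\alpha:[0,\eps)_{\wR}\to\wR^n$ such that $\alpha(0)=\gamma_0$, $\alpha(s)\to\gamma_0$ and $\phi(\alpha(s))\to y_1$ in the $\wR$-topology as $s\to 0^+$.

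Next, I will evaluate $\alpha$ on a concrete sequence of small Puiseux series, for instance $s_k=t^k\in\wR_{1,1}$, which tends to $0$ in the $\wR$-topology. A coordinatewise application of Lemma \ref{lem-b} to $\alpha$ (after a harmless $\wR$-semialgebraic extension to all of $\wR$) produces $(r_1,q_1)\in\Z\times\N^*$ with $\alpha(s_k)\in\wR^n_{r_1,q_1}$ for every $k$, and Lemma \ref{lem-b} applied directly to $\phi$ confines $\phi(\alpha(s_k))$ to some $\wR_{r_2,q_2}$. Enlarging the exponents if necessary, the points $\gamma_0$ and $y_1$ also lie in the corresponding strata.

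The decisive observation is that on any fixed $\wR^n_{p,q}$ the $\wR$-topology is finer than the induced product topology: indeed, if $|\gamma-\gamma'|<t^k$ in $\wR$, then the Puiseux coefficients of $\gamma-\gamma'$ of order strictly less than $k$ all vanish, so $\gamma$ and $\gamma'$ agree in every $t^{i/q}$-coordinate with $i/q<k$ and are a fortiori product-close. Consequently $\alpha(s_k)\to\gamma_0$ in the product topology of $\wR^n_{r_1,q_1}$, and weak continuity of $\phi$ forces $\phi(\alpha(s_k))\to\phi(\gamma_0)$ in the product topology of $\wR$. The same remark applied in $\wR_{r_2,q_2}$ gives $\phi(\alpha(s_k))\to y_1$ in the product topology. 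Since the ambient space $\prod_{r\in\mathbf Q}\R t^r$ is Hausdorff, limits are unique, so $y_1=\phi(\gamma_0)$, contradicting the choice of $y_1$.

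The step I expect to be the main nuisance is justifying the use of Lemma \ref{lem-b} for the curve $\alpha$: strictly speaking that lemma is stated for $\wR$-semialgebraic $\wR$-continuous maps $\wR^n\to\wR$, whereas $\alpha$ is defined only on $(0,\eps)_{\wR}$ and is vector valued. The workaround is either to extend $\alpha$ $\wR$-semialgebraically and continuously to all of $\wR$ and then apply Lemma \ref{lem-b} coordinate by coordinate, or to rerun its proof directly, using that $\alpha$ is bounded on the closed bounded $\wR$-interval $[0,\eps/2]_{\wR}$ and that each component $\alpha_i$ satisfies a nontrivial polynomial relation with $s$. Once this bookkeeping is dispatched, the Hausdorff uniqueness argument closes the proof cleanly.
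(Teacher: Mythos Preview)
Your argument follows essentially the same route as the paper's: both use the curve selection lemma to produce a curve $\alpha$ along which $\phi$ has a limit $y_1\ne\phi(\gamma_0)$, then test weak continuity against the sequence $s_k=t^k$ (the paper writes $t^l$) and obtain a contradiction from Hausdorffness of the product topology. The paper phrases the reduction as ``it suffices to treat the case $n=1$'', absorbing $\alpha$ into $\phi$, whereas you keep $\alpha$ explicit and invoke Lemma~\ref{lem-b} to confine the points $\alpha(s_k)$ and $\gamma_0=\alpha(0)$ to a single $\wR^n_{r_1,q_1}$; this is a legitimate use of that lemma since $\alpha$ is $\wR$-continuous by construction, and it is exactly what is needed so that weak continuity of $\phi$ can be applied.

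There is, however, one step you should repair. You write that ``Lemma~\ref{lem-b} applied directly to $\phi$ confines $\phi(\alpha(s_k))$ to some $\wR_{r_2,q_2}$.'' This is circular: Lemma~\ref{lem-b} is stated for $\wR$-semialgebraic \emph{$\wR$-continuous} maps, and $\wR$-continuity of $\phi$ is precisely what you are proving. Fortunately the step is also unnecessary. Convergence in the $\wR$-topology always implies convergence in the product topology, with no need to sit inside a fixed $\wR_{p,q}$: if $|\delta_k-y_1|<t^m$ for large $k$, then for every fixed $r\in\mathbf Q$ one may take $m>r$ and conclude that the $t^r$-coefficient of $\delta_k-y_1$ vanishes for large $k$. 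Hence $\phi(\alpha(s_k))\to y_1$ in the product topology directly, and your Hausdorff argument closes the proof. So simply delete the appeal to Lemma~\ref{lem-b} for $\phi$ (and the claim that $y_1$ lies in $\wR_{r_2,q_2}$), keeping it only for $\alpha$.
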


\begin{proof} Using the curve selection lemma (Theorem 2.5.5 in \cite{BCR}), it is sufficient to treat the case $n=1$. The function $\phi$ is piecewise $\wR$-continuous as a $\wR$-semialgebraic function. Assume that $\phi(0)=0$ and that $\phi$ is not $\wR$-continuous at 0. Let assume that $\phi$ admits a finite limit $\beta\in \wR$ when approaching 0 from above (note that the proof is similar is the limit is infinite), namely
$$\forall \epsilon \in \wR_+^*, \exists \eta_{\epsilon} \in \wR_+^*, \forall \gamma \in (0,\eta_{\epsilon}],~~|\phi(\gamma)-\beta|<\epsilon.$$
Assume $\beta(t)=bt^{p/q}+\cdots$ with $b\in \R^*$, and choose $\epsilon=t^k$ with $k>p/q$. Then for $l\in \N$ big enough, the series $t^l$ belongs to $(0,\eta_{t^k}]$, so that $\phi(t^l)=bt^{p/q}+\cdots$. But the series $t^l$ also belongs to $\wR_{0,1}$, so by weak continuity of $\phi$ the $t^{p/q}$-coefficient of $\phi(t^l)$ should converge to 0 as $l$ goes to infinity.
\end{proof}

\begin{prop}\label{prop-compo} The composition of weakly continuous $\wR$-semialgebraic maps is weakly continuous.
\end{prop}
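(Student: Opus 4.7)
The plan is to fix a pair $(p,q)\in\Z\times\N^*$ and show that $(\psi\circ\phi)_{|\wR^n_{p,q}}:\wR^n_{p,q}\to\wR^k$ is continuous, where $\phi:\wR^n\to\wR^m$ and $\psi:\wR^m\to\wR^k$ are the two given weakly continuous $\wR$-semialgebraic maps. By the very definition of weak continuity, the obstacle to composing naively is that the image $\phi(\wR^n_{p,q})$ need not lie inside a single strip $\wR^m_{r,s}$, so one cannot directly feed the output of $\phi_{|\wR^n_{p,q}}$ into a weak continuity hypothesis for $\psi$.

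First I would observe that $\phi$, being weakly continuous and $\wR$-semialgebraic, is $\wR$-continuous by Lemma \ref{lem-wcc} (applied componentwise to the coordinate functions of $\phi$). Then I would apply Lemma \ref{lem-b} componentwise to each coordinate $\phi_j:\wR^n\to\wR$ of $\phi$: this yields, for each $j=1,\ldots,m$, a pair $(r_j,s_j)\in\Z\times\N^*$ such that $\phi_j(\wR^n_{p,q})\subset\wR_{r_j,s_j}$. Setting $r=\min_j r_j$ and letting $s$ be a common multiple of the $s_j$'s, we obtain a single $(r,s)$ with
$$\phi(\wR^n_{p,q})\subset \wR^m_{r,s}.$$

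Now the composition factors as
$$\wR^n_{p,q}\xrightarrow{\phi_{|\wR^n_{p,q}}}\wR^m_{r,s}\xrightarrow{\psi_{|\wR^m_{r,s}}}\wR^k,$$
where each space carries the topology induced from the product topology of its ambient $\wR^N$. Weak continuity of $\phi$ guarantees that the first arrow, viewed as a map into $\wR^m$ with the product topology, is continuous; since $\wR^m_{r,s}$ is a subspace of $\wR^m$ and the image lies there, the first arrow is continuous into $\wR^m_{r,s}$ with the subspace topology. Weak continuity of $\psi$ gives continuity of the second arrow. The composition is therefore continuous, so $(\psi\circ\phi)_{|\wR^n_{p,q}}$ is continuous into $\wR^k$ with the product topology, for every $(p,q)$, which is exactly weak continuity of $\psi\circ\phi$.

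The only genuinely nontrivial step is the passage from "$\phi_j(\wR^n_{p,q})\subset\cup_r\wR_{r,s_j}$" to a single strip $\wR^m_{r,s}$; this is handled by Lemma \ref{lem-b}, whose applicability rests on having first upgraded weak continuity to $\wR$-continuity via Lemma \ref{lem-wcc}. Everything else is a formal manipulation of subspace topologies.
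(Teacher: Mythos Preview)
Your argument is correct and is exactly the approach taken in the paper, whose proof reads in full: ``By Lemma \ref{lem-wcc} the maps are $\wR$-continuous, so we can use Lemma \ref{lem-b} to conclude''; you have simply spelled out the implicit factorisation through $\wR^m_{r,s}$. One small slip: with $s$ a common multiple of the $s_j$ (say $s=k_j s_j$), the inclusion $\wR_{r_j,s_j}\subset\wR_{r,s}$ requires $r\le k_j r_j$, so the correct choice is $r=\min_j k_j r_j$ rather than $r=\min_j r_j$; this does not affect the argument.
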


\begin{proof} By Lemma \ref{lem-wcc} the maps are $\wR$-continuous, so we can use Lemma \ref{lem-b} to conclude.
\end{proof}

\subsection{Weak continuity in dimension one}

In this section, we discuss some properties of weak continuity specific to the one dimensional case. The results exposed here will be useful for the study of the general case in next section.

Let us begin with an illustrative example. 

\begin{ex}\label{ex-simple}
Let $\phi:\wR \longrightarrow \wR$ denote the function defined by $\phi(\gamma)=\gamma^{p/q}$ with $p,q\in \N^*$. If $\phi$ is weakly continuous, then $p/q\in \N$. 

Actually, suppose on the contrary that $p/q\notin \N$. Denote $k=[p/q]$, where $[\cdot]$ stands for the floor function. Note that $\phi$ is of class $C^{\infty}$ on $\wR_+^*$, so that for $x\in \R_+^*$ there exists $\gamma \in [0,1]_{\wR}$ such that
$$\phi(x+t)=\phi(x)+\phi'(x)t+\cdots+\frac{\phi^{(k+1)}(x)t^{k+1}}{(k+1)!}+\frac{\phi^{(k+2)}(x+\gamma (t) t)t^{k+2}}{(k+2)!}$$
by the mean value theorem.
Due to the particular form of $\phi$, note that $\phi(x),\cdots,\phi^{(k)}(x)$ tends to 0 in $\R$ as $x$ tends to 0 in $\R$, whereas $\phi^{(k+1)}(x)$ tends to infinity. Note moreover that $\phi^{(k+2)}(x+\gamma (t) t)$ is continuous at $t=0$, so that the $t^{k+1}$-coefficient of $\phi^{(k+2)}(x+\gamma (t) t)t^{k+2}$ is zero, and thus the $t^{k+1}$-coefficient of the right hand side of the equality above tends to infinity as $x$ goes to $0$ in $\R$. This fact leads to a contradiction because on the left hand side, since $x+t\in \wR_{0,1}$ and $\phi$ is continuous for the product topology on $\wR_{0,1}$, the $t^{k+1}$-coefficient of $\phi(x+t)$ should converge in $\R$ as $x$ goes to $0$ in $\R$.
\end{ex}

\begin{lemma}\label{lem-Nw} Let $\phi: [0,1]_{\wR} \longrightarrow \wR$ be a $\wR$-Nash function. Then there exists $\epsilon \in (0,1]_{\wR}$ such that $\phi$ is weakly continuous on $[0,\eps]_{\wR}$.
\end{lemma}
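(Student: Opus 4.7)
My plan is to expand $\phi$ as a power series in $\gamma$ with integer exponents and $\wR$-coefficients near $0$, and then verify weak continuity by a finite-coefficient computation. The key point is that, although a generic $\wR$-Nash function admits a Puiseux-type expansion in $\gamma^{1/N}$ for some $N \in \N^*$, the $\wR$-$C^{\infty}$ hypothesis at $0$ must force all fractional exponents to vanish. This is exactly the mechanism that will rule out the obstruction encountered in Example \ref{ex-simple}.

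First, I would use that $\phi$ is algebraic over $\wR(\gamma)$ and invoke the Newton--Puiseux theorem applied to the defining polynomial of $\phi$ over $\wR$ (whose complexification is algebraically closed) to obtain an expansion
\[
\phi(\gamma) = \sum_{k \geq 0} \beta_k \gamma^{k/N}, \quad \beta_k \in \wR, \ N \in \N^*,
\]
valid on some $\wR$-interval $[0, \eps_0]_{\wR}$. Comparing with the Taylor expansion of $\phi$ at $0$, which exists since $\phi$ is $\wR$-$C^\infty$, I would force $\beta_k = 0$ whenever $k$ is not a multiple of $N$: otherwise, a sufficient number of derivatives would produce a term $\gamma^{k/N - j}$ with negative exponent blowing up at $0$, contradicting $C^\infty$-ness. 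Hence $\phi(\gamma) = \sum_{i \geq 0} c_i \gamma^i$ with $c_i \in \wR$. I would then shrink $\eps_0$ to some $\eps$ small enough that this series converges in the $t$-adic topology of $\wR$ for every $\gamma \in [0, \eps]_{\wR}$; this is feasible because, by the algebraic nature of $\phi$, the orders of the $c_i$ in $t$ are bounded below by a linear function of $i$.

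Next, to check weak continuity on $[0, \eps]_{\wR}$, I would fix $(p, q) \in \Z \times \N^*$ and a target rational exponent $\alpha$. Taking $\gamma = \sum_{j \geq p} a_j t^{j/q} \in \wR_{p, q} \cap [0, \eps]_{\wR}$, so that $\gamma$ is a positive infinitesimal whose order in $t$ is at least that of $\eps$, only finitely many indices $i$ can contribute to the coefficient of $t^{\alpha}$ in $\phi(\gamma) = \sum c_i \gamma^i$, namely those for which $i$ times the order of $\gamma$ plus the order of $c_i$ does not exceed $\alpha$. For each such $i$, the $t^\alpha$-coefficient of $c_i \gamma^i$ is a polynomial expression in finitely many of the real numbers $a_j$, with coefficients being certain fixed real coefficients of the Puiseux series $c_i$. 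This polynomial dependence is continuous for the product topology, which gives the required weak continuity.

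The hardest part will be making the initial Puiseux expansion rigorous over the non-archimedean real closed field $\wR$ and guaranteeing convergence on an $\wR$-interval. This relies on combining the Newton--Puiseux theorem for algebraic functions over $\wR$ with the $\wR$-$C^\infty$ hypothesis to eliminate fractional exponents, together with the $\wR$-analog of the fact that Nash functions are real analytic (which provides the geometric bounds on the orders of the Taylor coefficients needed for convergence on a small $\wR$-interval). Once these analytic tools are in place, the finite-dependence argument of the previous paragraph finishes the proof.
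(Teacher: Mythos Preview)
Your strategy is close in spirit to the paper's: both reduce weak continuity to showing that each $t^r$-coefficient of $\phi(\gamma)$ depends polynomially on finitely many coefficients of $\gamma$, via a Taylor-type expansion. The organisation differs: you want a full power series $\phi(\gamma)=\sum_i c_i\gamma^i$ at $0$ and read off coefficients directly on $\wR_{p,q}\cap[0,\eps]_{\wR}$, whereas the paper uses a finite Taylor expansion with Lagrange remainder at a variable base point $\delta$ and checks finite continuity (then invokes Proposition~\ref{prop-eq}). Your final finite-dependence computation is correct and, if anything, slightly more direct than the paper's route; note only that it forces $\eps$ to be infinitesimal so that every $\gamma\in[0,\eps]_{\wR}$ has strictly positive $t$-order, which is permitted by the statement and is also what the paper ends up with via the condition $\eps\beta<t$.

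The step you flag as ``the hardest part'' is, however, the entire technical content of the lemma, and you have not proved it. Two things hide there. First, the Newton--Puiseux detour is superfluous: since $\phi$ is $\wR$-Nash, hence $\wR$-$C^\infty$ by hypothesis, its Taylor coefficients $c_i=\phi^{(i)}(0)/i!$ already give an integer-exponent formal series, so there are no fractional powers to eliminate. Second, and this is the genuine gap, you need not merely $t$-adic convergence of $\sum c_i\gamma^i$ (which would follow from your claimed linear lower bound on $\mathrm{ord}_t(c_i)$) but also that the limit \emph{equals} $\phi(\gamma)$ on $[0,\eps]_{\wR}$. Over $\R$ this is ``Nash implies analytic''; over the non-archimedean field $\wR$ it is not a citation but a computation, and your appeal to ``the $\wR$-analog'' of real analyticity is exactly what must be established.

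The paper supplies precisely this missing piece. It proves a uniform bound $|\phi^{(k)}(\gamma)|\le\beta^k$ on $[0,\eps]_{\wR}$ by first normalising the minimal polynomial $P(\gamma,\phi(\gamma))=0$ (multiplying by a suitable power $\gamma^{sd-e}$ so that the leading coefficient in $y$ is a unit at $0$ while the others vanish there) and then differentiating the relation repeatedly to bound $|\psi^{(k)}|$ recursively in terms of lower derivatives. With $\eps\beta<t$, the Lagrange remainder of order $k+1$ has $t$-adic order exceeding $k$, which simultaneously yields your coefficient bound and identifies the truncated Taylor polynomial with $\phi$ modulo high powers of $t$. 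If you insert this derivative bound into your outline, the argument goes through.
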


\begin{proof} Let assume that there exist $\epsilon\in (0,1]_{\wR}$ and $\beta\in (0,\infty)_{\wR}$ such that 
$$\forall \gamma \in [0,\eps]_{\wR}, \forall k \in \N,~~|\phi^{(k)}(\gamma)|\leq \beta ^k.$$
Then we can prove that $\phi$ is weakly continuous on $[0,\eps]_{\wR}$. Actually, for $\gamma_1,\ldots,\gamma_l\in [0,\eps]_{\wR}$, we are going to prove that $\phi$ is continuous on the $\R$-linear space $\mathcal D$ generated by $\gamma_1,\ldots,\gamma_l$. Choose $\delta \in \mathcal D$. Since the mean value theorem holds for $\phi$, for any $k\in \N$ and any $\gamma \in \mathcal D$, there exists $\theta\in [0,1]_{\wR}$ such that
$$\phi(\delta+\gamma)-\phi(\delta)=\phi'(\delta)\gamma+\cdots+\frac{\phi^{(k)}(\delta)}{k!}\gamma^k+\frac{\phi^{(k+1)}(\delta+\theta \gamma)}{(k+1)!}\gamma^{k+1}.$$
Fix $r\in \mathbf Q$. We want to prove that the $t^r$-coefficient of $\phi(\delta+\gamma)-\phi(\delta)$ goes to zero in $\R$ as $\gamma$ goes to zero in $\mathcal D$. Let us write $\gamma=\sum_{j=1}^l b_j\gamma_j$, with $b_1,\ldots,b_l\in \R$, and $\gamma_j=\sum_{i\in \N}c_{j,i}t^{i/q}$, with $c_{j,i}\in \R$ and $q\in \N^*$ a common denominator for $\gamma_1,\ldots,\gamma_l$ (note that the index $i$ runs in $\N$ because $\gamma_1,\ldots,\gamma_l\in [0,\eps]_{\wR}$).

Shrinking $\epsilon$ if necessary, let assume that $\epsilon \beta <t$. Then, for $k\in \N$ big enough the $t^r$-coefficient of $\phi^{(k+1)}(\delta+\theta \gamma)\gamma^{k+1}$ is equal to zero. Therefore the $t^r$-coefficient of $\phi(\delta+\gamma)-\phi(\delta)$ is a polynomial in $b_1,\ldots,b_l$, which proves the linear continuity of $\phi$.

To achieve the proof, it remains to justify the existence of $\epsilon$ and $\beta$ such that the inequality upstairs is valid. There exists a non zero polynomial $P\in \wR[x,y]$ such that $P(\gamma,\phi(\gamma))=0$ since $\phi$ is $\wR$-semialgebraic. Set $P(x,y)=P_0(x)y^d+\cdots+P_d(x)$ and $P_0(x)=x^eQ_0(x)$, with $e\in \N$ and $Q_0\in \wR[x]$ verifying $Q_0(0)\neq 0$. For $s\in \N$, multiply the equality $P(\gamma,\phi(\gamma))=0$ by $\gamma^{sd-e}$ so that we obtain
$$Q_0(\gamma)\psi^d(\gamma)+\gamma^{s-e}P_1(\gamma)\psi^{d-1}(\gamma)+\cdots+\gamma^{sd-e}P_d(\gamma)=0,$$
where $\psi(\gamma)=\gamma^s\phi(\gamma)$. Fix $s>e$ and set $Q_i(\gamma)=\gamma^{si-e}P_i(\gamma)$ for $i\in \{1,\ldots,d\}$. 
Then the polynomial $Q(x,y)=Q_0(x)y^d+\cdots+Q_d(x)$ satisfies $Q(\gamma, \psi(\gamma))=0$ with $Q_0(0)\neq 0$ and $Q_i(0)=0$ for $i\in \{1,\ldots,d\}$. Note that it is sufficient to prove the result for $\psi$ instead of $\phi$.
By deriving $Q(\gamma, \psi(\gamma))$, we find
$$0=\psi'(dQ_0\psi^{d-1}+\cdots+Q_{d-1})+Q_0'\psi^d+\cdots+Q_d'.$$
If $\psi$ is not constant on a neighbourhood of zero, there exists $\alpha\in \wR_+^*$ such that 
$$|dQ_0\psi^{d-1}+\cdots+Q_{d-1}|\geq \alpha$$
on $[0,\epsilon]_{\wR}$ for $\epsilon\in \wR_+^*$ sufficiently small. Then
$$|\psi'|\leq \frac{1}{\alpha}|Q_0'\psi^d+\cdots+Q_d'|$$
on $[0,\epsilon]_{\wR}$, and define $\beta$ to be equal to the right hand side of the inequality. Repeating the derivation of $Q(\gamma, \psi(\gamma))$, we see that
$$|\psi''|\leq \frac{1}{\alpha}|\psi'\big((d(d-1)Q_0\psi^{d-2}+\cdots+2Q_{d-2})+(dQ_0'+\cdots+Q_1')\big)+Q_0''\psi^d+\cdots+Q_d''|$$
so that $|\psi''|\leq \beta^2$ on $[0,\epsilon]_{\wR}$, by enlarging $\beta$ if necessary. We conclude that we can find a common $\beta$ such that $\psi^{(k)}\leq \beta^k$ on $[0,\epsilon]_{\wR}$ for all $k\in \N$ because the $k$-derivatives of $Q_i$, for $i\in \{0,\ldots,d\}$, vanish for $k$ big enough since the $Q_i$ are polynomials.
\end{proof}

Now we are in position to prove that a weakly continuous $\wR$-semialgebraic function is not only $\wR$-continuous as in Lemma \ref{lem-wcc}, but moreover $\wR$-Nash.

\begin{prop}\label{lem-wcN} Let $\phi: \wR \longrightarrow \wR$ be a $\wR$-semialgebraic function. If $\phi$ is weakly continuous, then $\phi$ is $\wR$-Nash.
\end{prop}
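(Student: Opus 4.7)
First, by Lemma~\ref{lem-wcc} the map $\phi$ is already $\wR$-continuous, and the combination of semialgebraicity and continuity (together with the implicit function theorem applied to a defining polynomial equation $P(\gamma,\phi(\gamma))=0$ away from the finitely many $\gamma$ where $\partial_y P$ vanishes) shows that there is a finite set $F\subset\wR$ outside of which $\phi$ is $\wR$-Nash. The plan is therefore to prove $\wR$-Nashness of $\phi$ at each point of $F$; by translation one may assume such a point is $0$ with $\phi(0)=0$. On $(0,\eps)_{\wR}$, Newton--Puiseux applied to $P\in\wR[x,y]$ yields a convergent expansion
\[
\phi(\gamma)=\sum_{i\ge 1}c_i\gamma^{i/q},\qquad c_i\in\wR,\ q\in\N^*\text{ minimal},
\]
and a symmetric expansion holds on $(-\eps,0)_{\wR}$. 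I would then argue in two claims: first, that $q=1$ on each side, and second, that the resulting two one-sided power series have matching coefficients.

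For the first claim, assume by contradiction $q\ge 2$, let $p$ be the smallest index with $q\nmid p$ and $c_p\ne 0$, set $k_0=\lceil p/q\rceil$, and denote $s_0=\inf\supp(c_p)$, so that $c_p^{s_0}\ne 0$. Following the spirit of Example~\ref{ex-simple}, the test curves to pick are $\gamma_n=x_n+t\in\wR_{0,1}$ with $x_n\in\R_+^*$, $x_n\to 0^+$; in the product topology $\gamma_n\to t$, so weak continuity on $\wR_{0,1}$ would force $\phi(\gamma_n)\to\phi(t)$ in the product topology. Expanding $(x_n+t)^{i/q}$ by the binomial series, the $t^r$-coefficient in $\R$ of $\phi(\gamma_n)\in\wR$ becomes
\[
\alpha_r(x_n)=\sum_{i,k\ge 0}c_i^{r-k}\binom{i/q}{k}x_n^{i/q-k}.
\]
Blow-ups as $x_n\to 0^+$ come only from indices with $q\nmid i$, since $\binom{i/q}{k}=0$ whenever $i/q\in\N$ and $k>i/q$. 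For $r=s_0+k_0$ the pair $(p,k_0)$ contributes the non-zero term $c_p^{s_0}\binom{p/q}{k_0}x_n^{p/q-k_0}$ at the slowest blow-up rate, while indices $i<p$ with $q\nmid i$ do not contribute (by minimality of $p$) and indices $i>p$ with $q\nmid i$ produce strictly slower rates. The only potential coincidences at the same rate are with the pairs $(p+nq,k_0+n)$ for $n\ge 1$; if cancellation happens to occur for our chosen $r$, one perturbs $r$ within $s_0+k_0+(\supp(c_p)-s_0)$ to break it. This yields $|\alpha_r(x_n)|\to\infty$, contradicting weak continuity.

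Once integer-exponent expansions $\phi_+(y)=\sum a_iy^i$ on the right and $\phi_-(y)=\sum b_iy^i$ on the left are secured, I would match them via the test curves $\gamma_n^-=-x_n+\lambda$, where $x_n\in\R_+^*$, $x_n\to 0^+$, and $\lambda\in\mathfrak m_+$ is arbitrary. Since $\lambda$ is infinitesimal while $x_n$ is a finite positive real, $\gamma_n^-<0$ in $\wR$, yet $\gamma_n^-\to\lambda>0$ in the product topology on the relevant $\wR_{p,q}$. Weak continuity then forces $\phi(\gamma_n^-)\to\phi(\lambda)$, while a direct term-by-term computation gives $\phi(\gamma_n^-)\to\phi_-(\lambda)$ and $\phi(\lambda)=\phi_+(\lambda)$ (using that $\lambda>0$). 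Thus $\sum_{i\ge 1}(a_i-b_i)\lambda^i=0$ for every $\lambda\in\mathfrak m_+$, and the uniqueness of Taylor expansion of the $\wR$-analytic function so obtained at $0$ yields $a_i=b_i$ for all $i$. The two one-sided expansions then glue into a single convergent $\wR$-power series at $0$, so $\phi$ is $\wR$-Nash at $0$.

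The hard part is the cancellation analysis in the first claim: one has to verify that the contributions to $\alpha_r(x_n)$ at the dominant blow-up rate cannot be coincidentally killed by higher-index terms. The freedom to perturb $r$ within $\supp(c_p)$, combined with the growth bound on $\inf\supp(c_i)$ coming from the $\wR$-radius of convergence of the Puiseux series $\sum c_i\gamma^{i/q}$, is what I would rely on to rule out pathological cancellations and secure the contradiction.
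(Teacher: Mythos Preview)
Your overall architecture---show the one-sided Puiseux exponent $q$ equals $1$, then glue---matches the paper's. The decisive difference is in how the first claim is obtained. The paper does \emph{not} analyse the full series $\sum c_i\gamma^{i/q}$: it first observes that $\phi^q$ is $\wR$-Nash (Prop.~8.1.13 in \cite{BCR}), so near $0$ one can write $\phi^q=\rho^p$ for a $\wR$-Nash diffeomorphism $\rho$; then Lemma~\ref{lem-Nw} gives that $\rho^{-1}$ is weakly continuous on a small interval, and by Proposition~\ref{prop-compo} the composite $\phi\circ\rho^{-1}(\gamma)=\gamma^{p/q}$ is weakly continuous. Example~\ref{ex-simple} then applies to a \emph{single monomial}, and the paper also treats explicitly the case where the interval $(0,\eps']$ contains no real number by rescaling. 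Similarly, for the gluing the paper reduces $\psi=\phi_+-\phi_-$ to the normal form $\gamma^k$ before testing with $x+t$. In both steps the reduction to a monomial is what kills the need for any cancellation bookkeeping.

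Your direct approach has two genuine gaps. First, the identity $\phi(\gamma)=\sum c_i\gamma^{i/q}$ is only a formal Puiseux expansion over the non-Archimedean field $\wR$; you cannot simply substitute $\gamma=x_n+t$ with $x_n\in\R_+^*$ and expand each $(x_n+t)^{i/q}$ by the binomial series to obtain the double sum for $\alpha_r(x_n)$. Indeed the radius of validity of the expansion may lie in $\mathfrak m_+$ (as the paper notes), in which case no positive real $x_n$ is admissible at all; and even after rescaling, convergence and interchange of the two infinite sums over $\wR$ need an argument you have not supplied. Second, the cancellation analysis is not settled: your claim that $(p,k_0)$ sits at the ``slowest blow-up rate'' while $i>p$ produce ``strictly slower rates'' is self-contradictory as phrased, and the pairs $(p+nq,k_0+n)$ all share the \emph{same} rate $p/q-k_0$, so the coefficient at that rate is an infinite (or at least unbounded-length) sum $\sum_{n\ge 0}c_{p+nq}^{\,r-k_0-n}\binom{p/q+n}{k_0+n}$. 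Your proposed remedy---perturb $r$ within $\supp(c_p)$ and invoke a growth bound on $\inf\supp(c_i)$---is a plausible heuristic but is not carried out, and it is exactly the step the paper's monomial reduction is designed to avoid. The same issue resurfaces in your gluing step, where ``$\phi_-(\gamma_n^-)\to\phi_-(\lambda)$'' tacitly uses weak continuity of the Nash extension of $\phi_-$, i.e.\ again Lemma~\ref{lem-Nw}.
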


\begin{proof} We are going to simplify the expression of $\phi$ in order to come down to the form studied in Example \ref{ex-simple}. The problem is local, so consider the question at $0$ and assume $\phi(0)=0$.

First of all, there exist $q\in \N^*$ and $\epsilon \in \wR_+^*$ so that the function $\phi^q$ is Nash on $[0,\epsilon]_{\wR}$ since $\phi$ is $\wR$-semialgebraic (it follows from Proposition 8.1.13 in \cite{BCR} for example). So, shrinking $\epsilon$ if necessary, there exist $p\in \N^*$, $\epsilon'\in \wR_+^*$ and a $\wR$-Nash diffeomorphism $\rho :[0,\epsilon]_{\wR} \longrightarrow [0,\epsilon']_{\wR}$ such that $\phi^q=\rho^p$.

Note that, by shrinking $\epsilon'$ if necessary, $\rho^{-1}$ is weakly continuous by Lemma \ref{lem-Nw}. Then it suffices to treat the case of the composed function $\phi \circ \rho^{-1}$, which is weakly continuous by Proposition \ref{prop-compo}, and defined on $[0,\epsilon']$ by the formula $\phi \circ \rho^{-1} (\gamma)=\gamma^{p/q}$.

Note that if there exists a real number in $(0,\epsilon']_{\wR}$, the explanation given in Example \ref{ex-simple} are suitable. Assume now that $(0,\epsilon']\cap \R=\emptyset$, or equivalently that $\epsilon'\in \mathfrak m_+$. Then the multiplication by $\epsilon'$ gives a $\wR$-Nash diffeomorphism $\xi: [0,1]_{\wR}\longrightarrow [0,\epsilon']_{\wR}$. then $\phi\circ \rho^{-1} \circ \xi (\gamma)=\epsilon'^{p/q}\gamma^{p/q}$ is weakly continuous again, so $p/q\in \N$ by Example \ref{ex-simple}.

To achieve the proof, we need to see that if $\phi$ is of class $\wR$-Nash on $[-\epsilon,0]_{\wR}$ and on $[0,\epsilon]_{\wR}$, then $\phi$ is of class $\wR$-Nash on $[-\epsilon_1,\epsilon_1]_{\wR}$ for some $\epsilon_1\in \wR_+^*$. As $\phi$ is $\wR$-Nash on $[-\epsilon,0]_{\wR}$, there exist $\epsilon_1\in \wR_+^*$ and a $\wR$-Nash function $\phi_-$ defined on $[-\epsilon,\epsilon_1]_{\wR}$ which coincides with $\phi$ on $[-\epsilon,0]_{\wR}$. Shrinking $\epsilon_1$ if necessary, there exists a $\wR$-Nash function $\phi_+$ defined on $[-\epsilon_1,\epsilon]_{\wR}$ which coincides with $\phi$ on $[0,\epsilon]_{\wR}$. Define a function $\psi$ on $[\epsilon_1,\epsilon_1]_{\wR}$ to be zero on $[-\epsilon_1,0]_{\wR}$ and to be $\phi_+-\phi_-$ on $[0,\epsilon_1]_{\wR}$. Then $\psi$ is $\wR$-semialgebraic and weakly continuous. If $\psi$ is not identically zero, we can assume (as before) that $\psi(\gamma)=\gamma^{k}$ on $[0,\tilde a]_{\wR}$ for some $k\in \N^*$ and $a\in \R_+^*$. Then
$$
\psi(x+t)=\left\{
\begin{array}{l}
0\qquad\qquad\qquad\qquad\qquad\,\text{for}\ x\in[-a,\,0)\\
x^k+k t x^{k-1}+\cdots+t^k\quad\text{for}\ x\in[0,\,a).
\end{array}
\right.
$$
In particular, the $t^k$-coefficient of $\psi(x+t)$ is equal to $1$ for $x\in [0,a)$ and to $0$ for $x\in [a,0)$, in contradiction with the linear continuity of $\psi$ in restriction to the $\R$-linear space generated by $1$ and $t$. As a consequence $\psi$ is constant equal to zero, and therefore $\phi$ is of class $\wR$-Nash.
\end{proof}

\subsection{Finite dimensional case}

For the definition of manifold with corners, we refer to \cite{}.

\begin{prop}\label{prop-Nwc} Let $g: M_1 \longrightarrow M_2$ be a Nash map between Nash manifolds possibly with corners. Then for any compact $\wR$-semialgebraic subset $\mathcal X$ of $\tilde M_1$, the restriction $\tilde g_{| \mathcal X}$ is weakly continuous.
\end{prop}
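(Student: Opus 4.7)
The plan is to combine Proposition \ref{prop-eq} with a multi-variable Taylor expansion argument extending the one-variable proof of Lemma \ref{lem-Nw}. First, after embedding $M_1\subset\R^n$ and $M_2\subset\R^m$ and extending $g$ to a Nash function on a semialgebraic neighbourhood of $M_1$, I may reduce to a scalar Nash function $g:\R^n\to\R$. By Proposition \ref{prop-eq} applied to $\tilde g$ it is then enough to check that $\tilde g_{|\mathcal X}$ is finitely continuous on every finite-dimensional $\R$-linear subspace $\mathcal D=\mathrm{span}_{\R}(\gamma_1,\ldots,\gamma_l)$ of $\wR^n$.

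Second, the compactness hypothesis provides a real compact $K\subset M_1$ with $\gamma(0)\in K$ for every $\gamma\in\mathcal X$, forcing each such $\gamma$ to have non-negative $t$-adic valuation and to split as $\gamma=\gamma(0)+\tilde\gamma$ with $\tilde\gamma$ of strictly positive valuation. Since $g$ is Nash, hence real-analytic on a neighbourhood of $K$, Cauchy estimates furnish a real radius $R>0$ and a constant $C$ with $|D^\alpha g(x)|/\alpha!\leq C/R^{|\alpha|}$ uniformly for $x\in K$. The Taylor expansion
$$\tilde g(\gamma)=\sum_{\alpha\in\N^n}\frac{D^\alpha g(\gamma(0))}{\alpha!}\,\tilde\gamma^{\alpha}$$
then converges in the $\wR$-topology, being dominated by the geometric series $\sum_\alpha C(|\tilde\gamma|/R)^{|\alpha|}$ with infinitesimal ratio $|\tilde\gamma|/R$. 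Termwise evaluation at $t>0$ small identifies the sum with $\tilde g(\gamma)$.

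Third, I extract finite continuity coefficient by coefficient. Fix $(p,q)\in\Z\times\N^*$ and restrict to $\gamma\in\mathcal X\cap\mathcal D\cap\wR^n_{p,q}$: then $\tilde\gamma\in\wR^n_{1,q}$, so $\tilde\gamma^\alpha$ has $t$-adic valuation at least $|\alpha|/q$. Consequently, for any prescribed $r\in\mathbf Q$ the coefficient of $t^r$ in $\tilde g(\gamma)$ receives contributions only from the finitely many multi-indices $\alpha$ with $|\alpha|\leq rq$. Each such contribution is the product of the analytic quantity $D^\alpha g(\gamma(0))/\alpha!$, continuous in $\gamma(0)=\sum_j b_j\gamma_j(0)$ and hence in the coordinates $b=(b_1,\ldots,b_l)\in\R^l$, with a polynomial in the Puiseux coefficients of $\tilde\gamma$, themselves linear in $b$. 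Summing these finitely many continuous functions gives the continuity of the $t^r$-coefficient of $\tilde g(\gamma)$ in $b$, and therefore finite, hence weak, continuity.

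The main obstacle will be to make precise the compactness hypothesis in a way that genuinely rules out elements of negative $t$-adic valuation: mere $\wR$-boundedness permits bounds such as $|\gamma|\leq t^{-N}$, and already $g(x)=1/(1-x)$ on $M_1=\R\setminus\{1\}$ with $\gamma=bt^{-1/2}$, which converges to $0$ in the product topology as $b\to 0$, shows that without this restriction the proposition fails. The correct reading of ``compact'' here is that there exists a real compact $K\subset M_1$ with $\mathcal X\subset\tilde K$. A secondary technicality is to extend $g$ uniformly across the corners of $M_1$ so that the Cauchy estimates apply on an open real neighbourhood of $K$; this can be handled either by a Nash extension across the boundary strata, or by applying the Taylor argument in one-sided coordinates near corner points.
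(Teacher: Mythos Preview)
Your proposal is correct and follows essentially the same route as the paper: reduce to showing finite continuity via Proposition~\ref{prop-eq}, and then exploit the real-analyticity of the Nash map $g$ through its Taylor expansion to see that each Puiseux coefficient of $\tilde g(\gamma)$ depends continuously on the finitely many $\R$-coordinates parametrising $\gamma\in\mathcal D$.

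The differences are cosmetic. Where you extend $g$ to a Nash function on an open neighbourhood of $M_1$ in the ambient $\R^n$ (which, as you note, needs a word about corners), the paper instead localises and writes $M_1$ as the graph of a Nash map over a full-dimensional compact $M_3\subset\R^n$, replacing $g$ by $g\circ(\id\times h)$; this sidesteps the extension issue entirely. And where you expand about the moving centre $\gamma(0)$ and invoke uniform Cauchy estimates on the compact $K$, the paper fixes the centre at $\delta(0)$ and appeals directly to the convergent power-series description of a Nash germ (Proposition~8.1.8 in \cite{BCR}), then argues that the $t^k$-coefficient of $\tilde g(\sum_j b_j\gamma_j)$ depends only on a truncation of the $\gamma_j$ and is a Nash function of $b$. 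Your variable-centre formulation makes the finite truncation in $|\alpha|$ more transparent; the paper's graph reduction is slightly cleaner on the manifold side. Both are the same argument.

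Your closing remark about the meaning of ``compact'' is apt: the paper's proof, like yours, tacitly works inside $\tilde U$ for a real compact neighbourhood $U\subset M_1$, so that $\tilde M_1\subset\bigcup_q\wR^n_{0,q}$ and every $\gamma$ has a well-defined endpoint $\gamma(0)\in M_1$.
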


\begin{proof} We can assume that $M_2=\R$ without loss of generality. Assume $M_1\subset \R^m$. 

For simplicity, we can reduce the proof to the case where $M_1$ is a $n$-dimensional compact Nash manifold with corners of $\R^n$ as follows. First, it suffices to prove that $\tilde g$ is continuous in restriction to $\tilde U$ for any compact semialgebraic neighbourhood $U \subset M_1$. In particular we may assume that $M_1$ is the graph of a Nash map $h$ from a $n$-dimensional compact Nash manifold with corners $M_3\subset \R^n$ to $\R^{m-n}$. Since the projection map from $\tilde M_1$ to $\tilde M_3$ is clearly weakly continuous, it is sufficient (using Proposition \ref{prop-compo}) to prove that the map $\tilde g \circ (\id \times h)$ from $\tilde M_3$ to $\tilde M_2$ is weakly continuous.

So we assume that $g$ is a Nash map from a $n$-dimensional compact Nash manifold with corners $M_1\subset \R^n$ to $\R$. We are going to prove that $\tilde g$ is finitely continuous, and conclude using Proposition \ref{prop-eq}. Note that $\tilde M_1\subset \cup_{q\in \N^*}\wR^n_{0,q}$ since $M_1$ is compact, so that an element $\gamma \in \tilde M_1$ has a well-defined endpoint $\gamma(0)\in M_1$. 

Let $\mathcal D$ be a $\R$-linear subspace of $\cup_{q\in \N^*}\wR^n_{0,q}$ generated over $\R$ by $\gamma_1,\ldots,\gamma_l$, and chose $\delta \in \mathcal D$. Set $\delta(0)=x_0\in M_1$. Since $g$ is Nash over the real numbers, describe $g$ (\cite{BCR} Proposition 8.1.8) around $x_0$ as a series
$$g(x)=\sum_{I\in \N^n} a_I(x-x_0)^I,~~a_I\in \R ,$$
and similarly, for $j\in \{1,\ldots,l\}$, denote
$$\gamma_j=\sum_{i\in \N}c_{j,i}t^{i/q},~~c_{j,i}\in \R^n$$
where $q\in \N^*$ is a common denominator for $\gamma_1,\ldots,\gamma_l$. Then, for $b_1,\ldots,b_l\in \R$, we have
$$\tilde g (\sum_{j=1}^lb_j\gamma_j)(t)=g(\sum_{j=1}^lb_j\gamma_j(t))=\sum_{I\in \N^n} a_I(\sum_{j=1}^l\sum_{i\in \N} b_jc_{j,i}t^{i/q}-x_0)^I$$
with $t\in [0,r]$, for $r\in \R_+^*$ sufficiently small. As a consequence, for any $k\in \mathbf Q$, there exists $i_0\in \N$ such that the $t^k$-coefficient of $\tilde g (\sum_{j=1}^lb_j\gamma_j)(t)$ is equal to the $t^k$-coefficient of
$$\sum_{I\in \N^n} a_I(\sum_{j=1}^l\sum_{i=0}^{i_0} b_jc_{j,i}t^{i/q}-x_0)^I,$$
for $t$ sufficiently small and $b_1,\ldots,b_l$ such that $\sum_{j=1}^lb_j\gamma_j(0)$ is closed enough to $x_0$. The latter function is Nash, therefore the $t^k$-coefficient of $\tilde g (\sum_{j=1}^lb_j\gamma_j)(t)$ is continuous in $b_1,\ldots,b_l$ and $\tilde g$ is continuous in restriction to $\mathcal D$.
\end{proof}

\begin{rmk}\label{rmk} \begin{flushleft}
\end{flushleft}
\begin{enumerate}
\item If $M_1$ is not compact, the map $\tilde g$ is not necessarily weakly continuous. Consider for example the function $g:\R\longrightarrow \R$ defined by $g(x)=(1+x^2)^{-1}$. Then $\tilde g(ct^{-1})=t^2/c^2-t^4-c^4+\cdots$ for $c\in \R^*$, so that $\tilde g(ct^{-1})$ does not tend to $\tilde g(0)=1$ as $c$ goes to 0 in $\R$.
\item A $\wR$-Nash map defined on a closed and bounded $\wR$-Nash manifold is not necessarily weakly continuous. Consider for example the map $\phi$ defined by $\phi(\gamma)=\tilde g (\gamma/t)$ on $[0,1]_{\wR}$, where $g$ is defined upstairs.
\end{enumerate}
\end{rmk}

Next result is a weaker analogue of Lemma \ref{lem-wcN} in higher dimensions.

\begin{prop}\label{prop-C1} Let $\phi: \mathcal M_1 \longrightarrow \mathcal M_2$ be a local $\wR$-semialgebraic map between local $\wR$-Nash manifolds possibly with corners. If $\phi$ is weakly continuous, then it is of class $\wR$-$C^1$. 
\end{prop}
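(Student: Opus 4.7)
Being $\wR$-$C^1$ is a local property and can be checked coordinate by coordinate in the target, so after composing with local $\wR$-Nash coordinate charts I may assume that $\phi : U \to \wR$ is a weakly continuous $\wR$-semialgebraic function on an open $\wR$-semialgebraic subset $U \subset \wR^n$, and fix a point $x_0 \in U$ at which to verify the $\wR$-$C^1$ regularity.

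The core idea is to reduce to the one-variable statement Proposition \ref{lem-wcN}. Given any $v \in \wR^n$, the restriction $\psi_v(s) := \phi(x_0 + sv)$ is $\wR$-semialgebraic; it is also weakly continuous, because if $s \in \wR_{p,q}$ then each coordinate of $x_0 + sv$ lies in $\wR_{p',q'}$ for some $(p',q')$ depending only on $(x_0, v, p, q)$, and the weak continuity of $\phi$ on $\wR_{p',q'}^n$ transfers through the continuous affine map $s \mapsto x_0 + sv$. Proposition \ref{lem-wcN} then shows that $\psi_v$ is $\wR$-Nash in a neighbourhood of $0$, so the directional derivative $D_v \phi(x_0) := \psi_v'(0) \in \wR$ exists; in particular, every partial derivative $\partial_i \phi(x_0) = D_{e_i} \phi(x_0)$ exists at every $x_0 \in U$.

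By $\wR$-semialgebraic stratification, $\phi$ is $\wR$-$C^\infty$ on a dense open $\wR$-semialgebraic subset $U_0 \subset U$ whose complement $S$ has strictly smaller $\wR$-dimension, and on $U_0$ the partials $\partial_i \phi$ agree with the $D_{e_i} \phi$ produced above and are therefore $\wR$-semialgebraic and $\wR$-continuous in the classical sense. The remaining point is to show $\partial_i \phi$ extends $\wR$-continuously across $S$. I would argue this by contradiction: if some $x_0 \in S$ violated continuity, the $\wR$-semialgebraic curve selection lemma would produce an $\wR$-Nash arc $\alpha : (0, \epsilon)_{\wR} \to U_0$ with $\alpha(s) \to x_0$ and $\partial_i \phi(\alpha(s))$ failing to converge to $\partial_i \phi(x_0)$. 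Comparing the $\wR$-Nash Puiseux expansion of $\phi \circ \alpha$ near $0$ with the directional derivative $D_{\alpha'(0)} \phi(x_0)$ already constructed should yield the contradiction.

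The main obstacle lies in this final step: to invoke Proposition \ref{lem-wcN} on $\phi \circ \alpha$ one must verify its weak continuity, and unlike the linear case a general $\wR$-semialgebraic arc $\alpha$ may mix coordinates of unboundedly large denominators, so that $\alpha(\wR_{p,q})$ need not be contained in any single stratum $\wR_{p',q'}^n$. The remedy I envisage is an argument in the spirit of Lemma \ref{lem-b}, reparametrising $\alpha$ by a suitable uniformiser so that the reparametrised arc lies in a fixed $\wR_{p',q'}^n$; at that point the one-variable weak continuity machinery applies, the chain-rule expansion of $\phi \circ \alpha$ becomes available, and its comparison with $D_{\alpha'(0)} \phi(x_0)$ forces $\partial_i \phi(\alpha(s)) \to \partial_i \phi(x_0)$, contradicting the hypothesis and establishing $\wR$-$C^1$ regularity.
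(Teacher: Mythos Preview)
Your skeleton matches the paper's: localise, use the one-variable result to get existence of partials, then argue continuity of the partials by contradiction via curve selection and a reparametrised arc. The gap is in the last paragraph, where the actual contradiction is supposed to be produced.

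Your plan is to make $\phi\circ\alpha$ weakly continuous (hence $\wR$-Nash by Proposition~\ref{lem-wcN}) and then ``compare the $\wR$-Nash Puiseux expansion of $\phi\circ\alpha$ near $0$ with $D_{\alpha'(0)}\phi(x_0)$''. This does not isolate the partial derivative you care about. On the smooth locus the chain rule gives $(\phi\circ\alpha)'(s)=\sum_j\partial_j\phi(\alpha(s))\,\alpha_j'(s)$, so a single arc only controls one linear combination of the $\partial_j\phi(\alpha(s))$; a blow-up of $\partial_i\phi$ along $\alpha$ could be masked by the other terms or by $\alpha_i'(s)\to 0$. At $s=0$ you only know the directional derivative $D_{\alpha'(0)}\phi(x_0)$, and you have no right to equate it with $\sum_j\partial_j\phi(x_0)\,\alpha_j'(0)$ since linearity of the differential is part of what you are trying to prove. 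So the comparison you describe does not yield a contradiction.

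The paper closes this gap differently and the missing idea is worth noting. After reparametrising the curve $\eta$ to a weakly continuous $\wR$-Nash arc $\xi$, it does \emph{not} analyse $\phi\circ\xi$. Instead it replaces $\partial_1\phi(\xi(\gamma))$ by a difference quotient
\[
\frac{\phi\big(\xi_1(\gamma)+\beta,\xi_2(\gamma),\dots,\xi_n(\gamma)\big)-\phi(\xi(\gamma))}{\beta}
\]
via the mean value theorem, and uses semialgebraicity to choose a single $\beta_0$ that works uniformly along the one-real-parameter family $\gamma=at^s$, $a\in\R_+^*$. This family lies in a fixed $\R$-line, so the weak (equivalently finite) continuity of $\phi$ and of $\xi$ apply directly and force the relevant $t^r$-coefficient to be continuous in $a$; that contradicts the quantitative gap $|\theta-\partial_1\phi(0)|>ct^r$ coming from the curve selection. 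The crucial move is thus the difference-quotient replacement in the $e_i$ direction together with evaluation on an $\R$-line in $\wR$, not an analysis of $\phi\circ\alpha$.
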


\begin{proof} As the problem is local, we assume that $\phi$ is a $\wR$-semialgebraic weakly continuous function on $\wR^n$. The partial derivatives of $\phi$ exist by Proposition \ref{lem-wcN}, therefore it suffices to prove that they are $\wR$-continuous. Consider the partial derivative $\frac{\partial \phi}{\partial\gamma_1}$ and let us prove that it is $\wR$-continuous at $0\in \wR^n$. Let assume that $\frac{\partial \phi}{\partial\gamma_1}$ is not $\wR$-continuous at $0\in \wR^n$. In that case, by the curve selection lemma there exists a $\wR$-continuous curve $\eta:[0,1]_{\wR} \longrightarrow \wR^n$ satisfying $\eta(0)=0$, whose composition $\frac{\partial \phi}{\partial\gamma_1}\circ \eta$ is $\wR$-continuous on $(0,1]_{\wR}$ but the limit $\theta \in \wR^n$ of $\frac{\partial \phi}{\partial\gamma_1}\circ \eta(\gamma)$ as $\gamma$ tends to $0$ in $[0,1]_{\wR}$ is not equal to $\frac{\partial \phi}{\partial\gamma_1}(0)$.

We are going to obtain a contradiction with the weak continuity of $\phi$ by particularising this limit to a relevant set of series. 
Note that there exist $q\in \N$ and $\epsilon\in \wR_+^*$ such that the curve $\xi$ defined by $\xi(\gamma)=\eta(\epsilon \gamma^q)$ is a weakly continuous $\wR$-Nash map on $[0,1]_{\wR}$ by Proposition 8.1.13 in \cite{BCR} and Proposition \ref{lem-wcN}.

As $\theta$ is different from $\frac{\partial \phi}{\partial\gamma_1}(0)$, there exist $r\in \mathbf Q_+^*$ and $c\in \R_+^*$ such that
$$ct^r< |\theta -\frac{\partial \phi}{\partial\gamma_1}(0) |<2ct^r.$$
As a consequence, for $s\in \mathbf Q$ big enough, the inequalities
$$\frac{c}{2}t^r< |\frac{\partial \phi}{\partial\gamma_1}(\xi(at^s)) -\frac{\partial \phi}{\partial\gamma_1}(0) |<3ct^r\leqno(1)$$
hold for any $a\in \R_+^*$. We are going to replace the derivative of $\phi$ by a difference quotient in order to make use of the weak continuity of $\phi$. More precisely, for any $\gamma \in [0,1]_{\wR}$, there exist $\beta \in \wR_+^*$ such that
$$|\frac{\phi\big(\xi_1(\gamma)+\beta,\xi_2(\gamma),\ldots,\xi_n(\gamma)\big)}{\beta}-\frac{\partial \phi}{\partial\gamma_1}(\xi(\gamma))|<\frac{c}{8}t^r\leqno(2)$$
by the mean value theorem, and moreover the set of all such $(\gamma,\beta)$ is a $\wR$-semialgebraic subset of $[0,1]_{\wR}\times \wR_+^*$. In particular there exists a strictly increasing $\wR$-semialgebraic $\wR$-continuous function $\zeta$ on $[0,1]_{\wR}$ such that $\zeta^{-1}(0)=0$ and $(2)$ holds for any $(\gamma, \beta)\in (0,1]_{\wR}\times \wR_+^*$ satisfying $\beta \leq \zeta(\gamma)$. Restricting $\zeta$ to $\{at^s:a\in \R_+^*\}$, we see that there exists $\beta_0\in \wR_+^*$ such that $\beta_0<\zeta(at^s)$  for any $a\in \R_+^*$. As a consequence $(2)$ holds for $\beta=\beta_0$ and any $\gamma \in \{at^s:a\in \R_+\}$. Combining $(1)$ and $(2)$ provides 
$$\frac{c}{4}t^r<|\frac{\phi\big(\xi_1(at^s)+\beta_0,\xi_2(at^s),\ldots,\xi_n(at^s)\big)}{\beta_0} -\frac{\phi(\beta_0,0,\ldots,0)-\phi(0)}{\beta_0} | < 4ct^r \leqno(3)$$
for any $a\in \R_+^*$. But $(3)$ is in contradiction with the fact that, by weak continuity of $\phi$ and $\xi$, the $t^r$-coefficient of
$$\frac{\phi\big(\xi_1(at^s)+\beta_0,\xi_2(at^s),\ldots,\xi_n(at^s)\big)}{\beta_0}$$
converges to the $t^r$-coefficient of 
$$\frac{\phi(\beta_0,0,\ldots,0)-\phi(0)}{\beta_0}$$
as $a$ tends to $0$ in $\R$.
\end{proof}

\begin{rmk} In Proposition \ref{prop-C1}, we do not not whether the partial derivatives of $\phi$ are weakly continuous. this is the reason why we cannot prove that $\phi$ is of class $\wR$-Nash as in the one dimensional case.
\end{rmk}

We end this section with a technical result that will be useful in the proof of Lemma \ref{lem-homN2}.
Let $\Upsilon: \wR \longrightarrow \R$ denote the projection onto the constant term, namely if $\gamma=\sum_{i\geq p} a_i t^{i/q} \in \wR$, then $\Upsilon(\gamma)=a_0$.

\begin{lemma}\label{lem-tau} Let $\phi$ be an $\tilde\R$-semialgebraic weakly continuous function on $[0,\,1]_{\tilde\R}^n$ such that $\Ima \phi\subset[-\tilde a,\tilde a]_{\wR}$, with $a\in \R_+^*$.
Then 
$$\Upsilon\circ \phi=\Upsilon\circ \phi\circ(\Upsilon\times\cdots\times\Upsilon)$$
on $[0,\,1]_{\tilde\R}^n$, and restricting to the real numbers, we have that $\Upsilon\circ \phi|_{[0,\,1]^n}$ is a semialgebraic continuous function.
\end{lemma}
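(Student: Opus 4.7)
The plan is to reduce the functional identity to the statement that a certain real-valued continuous function of one real variable is constant, and to then constrain this function to a finite set by exploiting the $\tilde\R$-semialgebraicity of $\phi$.

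Given $\gamma\in[0,1]_{\tilde\R}^n$, each coordinate $\gamma_i$ lies in $\tilde\R_{0,q}$ for a common denominator $q$, so $\Upsilon(\gamma)\in[0,1]^n$ and $\gamma-\Upsilon(\gamma)$ has strictly positive valuation componentwise. For $s\in[0,1]_{\R}$, introduce the convex combination $\gamma(s)=(1-s)\Upsilon(\gamma)+s\gamma$, which remains in $[0,1]_{\tilde\R}^n$ by convexity, interpolates between $\Upsilon(\gamma)$ and $\gamma$, and satisfies $\Upsilon(\gamma(s))=\Upsilon(\gamma)$ for every real $s$ since $s(\gamma-\Upsilon(\gamma))$ has positive valuation and $\Upsilon$ is $\R$-linear. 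Setting $F(s):=\Upsilon(\phi(\gamma(s)))$, the first identity reduces to $F(0)=F(1)$. The path $s\mapsto\gamma(s)$ lives in the at most $2$-dimensional $\R$-linear subspace of $\tilde\R^n$ spanned by $\Upsilon(\gamma)$ and $\gamma$, so weak continuity of $\phi$ together with Proposition \ref{prop-eq} ensures that $\phi\circ\gamma$ is continuous for the product topology; since $\Upsilon$ is a coordinate projection, $F$ is continuous on $[0,1]_{\R}$.

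To constrain $F$, choose a non-zero polynomial $P\in\tilde\R[x_1,\ldots,x_n,y]$ with $P(\eta,\phi(\eta))=0$ on $[0,1]_{\tilde\R}^n$, supplied by $\tilde\R$-semialgebraicity. Multiplying $P$ by a suitable power of $t$, we may assume every coefficient has non-negative valuation and at least one has valuation zero, so the reduction $P^{\Upsilon}\in\R[x_1,\ldots,x_n,y]$ obtained by applying $\Upsilon$ coefficient-wise is non-zero. The hypothesis $\Ima\phi\subset[-\tilde a,\tilde a]_{\tilde\R}$ together with $\eta\in[0,1]_{\tilde\R}^n$ guarantees that every monomial appearing in $P(\eta,\phi(\eta))$ has non-negative valuation, and on such products $\Upsilon$ is multiplicative; applying $\Upsilon$ termwise to $P(\eta,\phi(\eta))=0$ yields
$$P^{\Upsilon}\bigl(\Upsilon(\eta),\Upsilon(\phi(\eta))\bigr)=0.$$
Specialising to $\eta=\gamma(s)$ and using $\Upsilon(\gamma(s))=\Upsilon(\gamma)$, we see that $F(s)$ is a root of the fixed polynomial $Q(y):=P^{\Upsilon}(\Upsilon(\gamma),y)\in\R[y]$ for every $s\in[0,1]_{\R}$. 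When $Q\not\equiv0$, the continuous $F$ takes values in a finite root set, hence is constant, giving $F(0)=F(1)$. The main technical obstacle is the degenerate case $Q\equiv0$, which can only arise when $\Upsilon(\gamma)$ lies in the proper algebraic subset of $[0,1]^n$ where all coefficients of $P^{\Upsilon}$ in $y$ vanish simultaneously; I would handle it either by replacing $P$ by an alternative defining polynomial whose reduction is non-degenerate at $\Upsilon(\gamma)$, or by approximating $\gamma$ within $\gamma+\mathfrak m_+^n$ by perturbations whose constant terms lie in the non-degenerate locus, then passing to the limit using continuity of $F$ in both arguments.

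For the second assertion, $\phi|_{[0,1]^n}:[0,1]^n\to\tilde\R$ is continuous for the product topology by weak continuity applied to the subset of constant Puiseux series $\R^n\subset\tilde\R_{0,1}^n$, and composing with the continuous projection $\Upsilon$ yields continuity of $\Upsilon\circ\phi|_{[0,1]^n}$. Semialgebraicity follows because the same reduction shows that the graph of $\Upsilon\circ\phi|_{[0,1]^n}$ is contained in the real algebraic hypersurface $\{(x,y):P^{\Upsilon}(x,y)=0\}\subset\R^{n+1}$, and a continuous real function on $[0,1]^n$ whose graph is contained in such a hypersurface is semialgebraic, obtained by continuous selection of a branch of roots on the dense open locus where $P^{\Upsilon}(x,\cdot)\not\equiv0$ and extended by continuity across the exceptional set.
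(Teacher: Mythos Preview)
Your approach to the first identity is genuinely different from the paper's. The paper reduces to $n=1$ by restricting $\phi$ to the $\tilde\R$-line through $\gamma_0$ and $(\Upsilon\times\cdots\times\Upsilon)(\gamma_0)$, and then exploits the piecewise monotonicity of one-variable $\tilde\R$-semialgebraic functions together with weak continuity along well-chosen $\R$-lines to reach a contradiction; the polynomial $P$ is invoked only for the second assertion. Your idea of confining the continuous real function $F(s)=\Upsilon(\phi(\gamma(s)))$ to the finite root set of $Q(y)=P^{\Upsilon}(\Upsilon(\gamma),y)$ is elegant, and your treatment of the second assertion agrees with the paper's.

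The gap is the degenerate case $Q\equiv0$, which you acknowledge but do not actually resolve. Neither proposed remedy works as written. There is no reason a different choice of $P$ avoids the degeneracy: if, say, the Zariski closure of the graph of $\phi$ is an irreducible hypersurface $\{P=0\}$, every polynomial vanishing on the graph is a multiple of $P$ and its reduction a multiple of $P^{\Upsilon}$. Your second suggestion, approximating $\gamma$ within $\gamma+\mathfrak m_+^n$, is self-defeating: any $\gamma'\in\gamma+\mathfrak m_+^n$ has $\Upsilon(\gamma')=\Upsilon(\gamma)$, so the constant term never leaves the degenerate locus. The correct perturbation is by small \emph{real} vectors $\epsilon\in\R^n$, so that $\Upsilon(\gamma+\epsilon)=\Upsilon(\gamma)+\epsilon$ can be moved into the dense non-degenerate set, after which one passes to the limit by finite continuity on the $\R$-span of $\gamma$ and $\R^n$; but even this requires care to keep $\gamma+\epsilon$ inside $[0,1]_{\tilde\R}^n$ when some coordinate of $\gamma$ is infinitesimally close to $0$ or $1$ (for example $\gamma_i=t$ forces $\epsilon_i\ge0$). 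The paper's monotonicity argument sidesteps all of this.
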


\begin{proof} We prove the equality announced by reduction to absurdity. Let $\gamma_0\in [0,\,1]_{\tilde\R}^n$ such that $\Upsilon \circ \phi (\gamma_0)\neq \Upsilon \circ \phi \circ(\Upsilon\times\cdots\times\Upsilon)(\gamma_0)$. We can assume than $n=1$ by restricting $\phi$ to the line over $\wR$ passing through $\gamma_0$ and $(\Upsilon\times\cdots\times\Upsilon)(\gamma_0)$, regarding that line as $\wR$ via the $\wR$-linear $\wR$-homeomorphism $\theta$ from $\wR$ to that line defined by $\gamma \mapsto \gamma \frac{\gamma_0}{|\gamma_0|}+(1-\gamma)(\Upsilon\times\cdots\times\Upsilon)(\gamma_0)$. Note that $\Upsilon \circ \theta^{-1}(\gamma_0)=0$.

As a consequence, we suppose $n=1$ and $\Upsilon( \gamma_0)=0$, and without loss of generality let assume moreover $\phi(0)=0$, so that $\Upsilon \circ \phi(\gamma_0)\neq 0$. Suppose $\Upsilon \circ \phi(\gamma_0)< 0$ for example. The function $\phi$ is monotone by pieces since $\phi$ is a $\wR$-semialgebraic function, so we may suppose that $\phi$ is increasing on $[\gamma_0,1]_{\wR}\cap \mathfrak m_+$ (or decreasing on a small neighborhoud on the left of $\gamma_0$, modifying $\gamma_0$ if necessary). Note that $\Upsilon\circ \phi$ is also increasing on $[\gamma_0,1]_{\wR}\cap \mathfrak m_+$.

Let $\gamma_1\in (\gamma_0,1]_{\wR}\cap \mathfrak m_+$ be such that $\gamma_0<x\gamma_1$ for any $x\in \R_+^*$. Denote by $\mathcal L_1$ the $\R$-line passing through 0 and $\gamma_1$, and by $\mathcal L_1^+$ the positive half line $\mathcal L_1^+=\{x\gamma_1,~~x\in \R^*_+\}$. Then the function $\Upsilon\circ \phi_{|\mathcal L_1}$ is continuous by weak continuity of $\phi$, it is increasing on $\mathcal L_1^+$ and $\Upsilon \circ \phi_{|\mathcal L_1}(0)=0$. Therefore $\Upsilon \circ \phi (\gamma)\geq 0$ for any $\gamma \in \mathcal L_1^+$.

Denote by $\mathcal L_2$ the $\R$-line passing through $\gamma_0$ and $\gamma_1$ and by $[\gamma_0,\gamma_1]_{\R}$ the segment between $\gamma_0$ and $\gamma_1$ in $\mathcal L_2$. For any $\gamma \in (\gamma_0,\gamma_1]_{\R}$, note that we have $\gamma_0<x\gamma$ for any $x\in \R_+^*$, so that $\Upsilon\circ \phi (\gamma)\geq 0$ as before. However $\Upsilon \circ \phi$ should be continuous on  $[\gamma_0,\gamma_1]_{\R}$ by weak continuity of $\phi$, which is in contradiction with $\Upsilon\circ\phi(\gamma_0)<0$.

It remains to prove that $\Upsilon\circ \phi$ is a semialgebraic function on $[0,1]^n\subset \R^n$, since the continuity of $\Upsilon\circ\phi_{|[0,1]^n}$ follows from the weak continuity of $\phi$. As $\phi$ is $\wR$-semialgebraic, let $P\in \wR[x_1,\ldots,x_{n+1}]$ be a nonzero polynomial such that $P(\gamma,\phi(\gamma))=0$ for any $\gamma \in [0,\,1]_{\tilde\R}^n$. Multiplying $P$ by the relevant power of $t$, we may suppose that all the coefficients of $P$ are bounded, and moreover some of them are not in $\mathfrak m_+$. Denote by $Q\in \R[x_1,\ldots,x_{n+1}]$ the polynomial obtained from $P$ by replacing the coefficients of $P$ with their value under $\Upsilon$. Then $Q$ is nonzero and $Q(x,\Upsilon\circ \phi (x))=0$ for any $x\in [0,1]^n$, so that $\Upsilon\circ \phi$ is semialgebraic on $[0,1]^n$.
\end{proof}

\section{Comparison of homologies}

In this section we compare different homology theories on local Nash manifolds, the usual singular homology, the $\wR$-semialgebraic singular homology (cf. \cite{DK} for the introduction of semialgebraic homology) where we consider $\wR$-semialgebraic $\wR$-continuous chains, and another sort of singular homology where we consider $\wR$-semialgebraic weakly continuous chains. 

\subsection{Algebraic topology over Puiseux series}\label{sect-algtop}

Let $\triangle^n$ denote the $n$-simplex spanned by $0,(1,0,...,),...,(0,...,0,1)$ in $\R^n$. For a topological space $X$, let $S_n(X)$ be the set of singular $n$-simplexes from $\triangle^n$ to $X$. We denote by $H_*(X)$ the singular homology groups of $X$ (with coefficient in $\mathbf Z$).

In this section, we establish several isomorphisms between different homology groups. We begin with the elementary remark that the singular homology groups $H_*(X)$ of a topological space $X$ can be defined by $\tilde\R$-simplexes as well as by usual $\R$-simplexes. Actually, replacing $\triangle^n$ with its extension $\tilde\triangle^n$ to $\wR^n$, consider the set  $S'_n(X)$ of singular $n$-$\tilde\R$-simplexes from $\tilde\triangle^n$ to $X$ and denote the corresponding homology groups by $H'_*(X)$.

\begin{lemma}\label{lem-h} The singular homology groups $H_*(X)$ of a topological space $X$ are isomorphic to the homology groups $H_*'(X)$ defined using $\tilde\R$-simplexes.
\end{lemma}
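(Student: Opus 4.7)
The plan is to construct explicit chain maps between $S_*(X)$ and $S'_*(X)$ going in both directions and to show they are mutual chain-homotopy inverses. All the real content lies in the geometric fact that $\tilde{\triangle}^n$ deformation retracts onto $\triangle^n$ inside the product topology.

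First, introduce the evaluation map $\mathrm{ev}\colon\tilde{\triangle}^n\to\triangle^n$ defined by $\gamma\mapsto\gamma(0)$, together with its section, the inclusion $j\colon\triangle^n\hookrightarrow\tilde{\triangle}^n$ sending $x$ to the constant germ $\tilde{x}$. Viewing $\tilde{\R}^n$ as a subspace of $\prod_{r\in\mathbf{Q}}\R t^r$ with the product topology, $\mathrm{ev}$ is coordinate projection onto the $t^0$-slot and $j$ is the insertion with zeroes elsewhere, so both are continuous, and $\mathrm{ev}\circ j=\mathrm{id}_{\triangle^n}$. Precomposition defines chain maps
\[
e:=\mathrm{ev}^{*}\colon S_*(X)\to S'_*(X),\qquad r:=j^{*}\colon S'_*(X)\to S_*(X),
\]
and $r\circ e=\mathrm{id}$ is immediate.

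For the other composition, I would exhibit the linear retraction
\[
H\colon\tilde{\triangle}^n\times[0,1]_{\R}\longrightarrow\tilde{\triangle}^n,\qquad H(\gamma,s):=(1-s)\gamma+s\,\widetilde{\gamma(0)}.
\]
In product coordinates, $H$ fixes the $t^0$-coefficient of each $\gamma_i$ and multiplies every higher-order coefficient by the \emph{real} factor $1-s$, so it is jointly continuous for the product topology. Its image lies in $\tilde{\triangle}^n$ by convexity, and $H(\cdot,0)=\mathrm{id}$, $H(\cdot,1)=j\circ\mathrm{ev}$. Hence for any $\sigma'\in S'_n(X)$, the composition $\sigma'\circ H$ is a continuous homotopy from $\sigma'$ to $e(r(\sigma'))$.

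Finally, I would convert $H$ into a chain homotopy $P\colon S'_n(X)\to S'_{n+1}(X)$ via the standard prism operator. Decompose the real prism $\triangle^n\times[0,1]$ into its canonical $n+1$ affine $(n+1)$-simplexes $\pi_i\colon\triangle^{n+1}\to\triangle^n\times[0,1]$, and extend each to a product-topology continuous map $\tilde{\pi}_i\colon\tilde{\triangle}^{n+1}\to\tilde{\triangle}^n\times[0,1]_{\R}$ by taking the $\tilde{\R}$-affine extension on the simplex factor and extracting the real scalar of the $[0,1]$-factor via the constant-term projection $\Upsilon$. Setting $P\sigma':=\sum_i(-1)^i(\sigma'\circ H)\circ\tilde{\pi}_i$, the classical prism computation yields $\partial P+P\partial=er-\mathrm{id}$. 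Combined with $re=\mathrm{id}$, this shows that $e$ and $r$ induce mutually inverse isomorphisms $H_*(X)\cong H'_*(X)$.

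The main obstacle is geometric rather than combinatorial: one must verify that $\tilde{\triangle}^n$ deformation retracts onto $\triangle^n$ within the product topology (not the $\tilde{\R}$-topology, which would destroy connectedness of intervals), and that this retraction can be plugged into the prism operator while keeping every intervening map product-topology continuous. Both reduce to continuity of real-affine operations on product-topology coordinates and of the projection $\Upsilon$, which is automatic since $\Upsilon$ is just projection onto the $t^0$-slot.
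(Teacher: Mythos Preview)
Your proof is correct and follows essentially the same route as the paper: the paper defines the same pair of maps (its $\alpha$ is your $e$, its $h$ is your $r$), checks $r\circ e=\id$, and exhibits exactly the linear homotopy $H(\gamma,s)=(1-s)\gamma+s\,\widetilde{\gamma(0)}$ between $\id_{\tilde\triangle^n}$ and $j\circ\mathrm{ev}$. The paper leaves the passage from this homotopy to a chain homotopy implicit, whereas you spell out the prism operator (with the pleasant device of projecting the interval factor through $\Upsilon$ so that the $(n{+}1)$-simplices remain $\tilde\R$-simplices); this is additional detail, not a different argument.
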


\begin{proof}
For $u\in S_n(X)$, define $\alpha(u)\in S'_n(X)$ by $\alpha(u)(\gamma)=u(\gamma(0))$ for $\gamma\in\tilde\triangle^n$, so that we obtain a map $\alpha: S_n(X) \longrightarrow S_n'(X)$. Similarly, for $\sigma \in S'_n(X)$, define $h(\sigma)\in S_n(X)$ by $h(\sigma)(x)=\sigma(\tilde x)$ for $x\in\triangle^n$, and this gives a map $h:S_n'(X) \longrightarrow S_n(X)$. Then $\alpha$ and $h$ define an homotopy equivalence. Indeed, $h\circ \alpha$ is the identity map of $S_n(X)$. Moreover the $\wR$-simplex $\alpha \circ h(\sigma)$, for $\sigma \in S_n'(X)$, is given by $\alpha \circ h(\sigma)(\gamma)=\sigma(\tilde {\gamma(0)})$ for any  $\gamma \in \tilde \triangle^n$. As a consequence, the map
$$\begin{array}{l}
\tilde \triangle^n\times [0,1] \longrightarrow X\\
\quad (\gamma,s) \quad \mapsto ~ \sigma\big((1-s)\gamma+s\tilde{\gamma(0)} \big)
\end{array}$$
defines an homotopy between $\sigma \in S_n'(X)$ and $\alpha \circ h(\sigma)$.
\end{proof}

  As we have already noticed, there are not enough many continuous $\tilde\R$-semialgebraic maps from $\tilde\triangle^n$ to $X$.
Hence we are interested in $\tilde\R$-semialgebraic $\tilde\R$-continuous maps and $\tilde\R$-semialgebraic weakly continuous maps rather than simply continuous maps.
We define below two kinds of homology groups which take into account this phenomenon. We will prove some isomorphisms between the corresponding homologies, in the spirit of Lemma \ref{lem-h} but with more involved proofs.

\begin{defi}
Let $\mathcal X$ and $\mathcal Y$ be $\tilde\R$-semialgebraic sets.
An {\it $\tilde\R$-semialgebraic singular $n$-simplex} of $\mathcal X$ is an $\tilde\R$-semialgebraic $\tilde\R$-continuous map from $\tilde\triangle^n$ to $\mathcal X$. We denote by $\tilde S_n(\mathcal X)$ the set of $\tilde\R$-semialgebraic singular $n$-simplex of $\mathcal X$, by $\tilde H_*(\mathcal X)$ the associated homology groups, and we call them the {\it $\tilde\R$-semialgebraic singular homology groups}. 

An {\it $\tilde\R$-semialgebraic $\tilde\R$-homotopy} $\theta_{\lambda}:\mathcal X\to \mathcal Y$, with $\lambda\in[0,\,1]_{\tilde\R}$, means an $\tilde\R$-semialgebraic $\tilde\R$-continuous map $(\gamma,\lambda)\mapsto \theta_{\lambda}(\gamma)$ from $\mathcal X\times[0,\,1]_{\tilde\R}$ to $\mathcal Y$.
If the $\theta_{\lambda}$ are all embeddings for $\lambda\in[0,\,1]_{\tilde\R}$, the $\tilde\R$-semialgebraic $\tilde\R$-homotopy is called an {\it $\tilde\R$-semialgebraic $\tilde\R$-isotopy}.
\end{defi}

The $\tilde\R$-semialgebraic $\tilde\R$-continuous maps between $\tilde\R$-semialgebraic sets and the homology groups $\tilde H_*$ satisfy the Eilenberg-Steenrod axioms of homology groups.
In the case where $\mathcal X$ is a local $\tilde\R$-Nash manifold, we define $\tilde H_*(\mathcal X)$ similarly.

Note that the family of local $\tilde\R$-Nash manifolds possibly with corners and the family of $\tilde\R$-semialgebraic weakly continuous maps between them form a category as shown already.
  
\begin{defi}
Let $\mathcal M$ be a local $\tilde\R$-Nash manifold possibly with corners. An {\it $\tilde\R$-semialgebraic weak singular $n$-simplex} of $\mathcal M$ is a $\tilde\R$-semialgebraic weakly continuous map from $\tilde\triangle^n$ to $\mathcal M$. We denote by $\tilde H^w_*(\mathcal M)$ the associated homology groups, and we call them the $\tilde\R$-semialgebraic weak singular homology groups of $\mathcal M$.
\end{defi}

For $\mathcal N$ another local $\tilde\R$-Nash manifold possibly with corners, we define similarly a {\it local $\tilde\R$-semialgebraic weak homotopy} $\theta_{\lambda}:\mathcal M\to \mathcal N$, with $\lambda\in[0,\,1]_{\tilde\R}$. The $\tilde\R$-semialgebraic weakly continuous maps between local $\tilde\R$-Nash manifolds possibly with corners and the homology groups $\tilde H^w_{*}$ satisfy the Eilenberg-Steenrod axioms of homology groups.

An $\tilde\R$-semialgebraic weak singular $n$-simplex is in particular an $\tilde\R$-semialgebraic singular $n$-simplex by Lemma \ref{lem-wcc}. As a consequence we have a natural map from $\tilde H^w_{*}(\mathcal M)$ to $\tilde H_*(\mathcal M)$, where $\mathcal M$ is a local $\tilde\R$-Nash manifold possibly with corners. This defines a functorial morphism between the covariant functors $\mathcal M\to\tilde H^w_{*}(\mathcal M)$ and $\mathcal M\to\tilde H_*(\mathcal M)$.

The goal of this section is to prepare the material to prove Theorem \ref{thm-hom} which states that the natural maps from $\tilde H^w_{*}(\mathcal F_f)$ to $\tilde H_*(\mathcal F_f)$ and from $\tilde H_*(\mathcal F_f)$ to $H_*(F_f(a,r))$ are isomorphisms, for $a\in \R_+^*$ and $r\in \R_+^*$ small enough (where $\mathcal F_f$ is a local $\wR$-Nash manifold and $F_f(a,r)$ is a semialgebraic set, cf part \ref{sect-homol}). In order to do this, we begin with considering the compact case in Lemma \ref{lem-comp}, proving that 
$\tilde H_*(\tilde X)$ is isomorphic to $H_*(X)$ for a compact semialgebraic set $X$.

\begin{lemma}\label{lem-comp}\begin{flushleft}
\end{flushleft}
\begin{enumerate}
\item A closed and bounded $\tilde\R$-semialgebraic set is $\tilde\R$-semialgebraically $\tilde\R$-homeomorphic to the $\tilde\R$-extension of some compact semialgebraic set $X$.
\item For such a set $\tilde X$ there is a natural isomorphism $\tilde H_*(\tilde X)\to H_*(X)$.
\end{enumerate}
\end{lemma}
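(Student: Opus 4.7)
The plan is to reduce both statements to a simplicial picture via the semialgebraic triangulation theorem over a real closed field, and then to invoke the classical identification of singular and simplicial homology on each side.

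For (1), I would apply the semialgebraic triangulation theorem over $\tilde\R$ (cf.\ \cite{BCR}, \cite{Shiota}) to the given closed bounded $\tilde\R$-semialgebraic set $\mathcal X$. This provides a finite simplicial complex $K$ in some $\tilde\R^N$ with vertices $v_1,\dots,v_k$, together with an $\tilde\R$-semialgebraic $\tilde\R$-homeomorphism $\mathcal X\to K$. Let $\Sigma$ be the abstract (combinatorial) simplicial complex recording which subsets of $\{v_1,\dots,v_k\}$ span simplices of $K$. Realize $\Sigma$ geometrically over $\R$ in some $\R^M$ (for $M$ large enough, by placing vertices $w_1,\dots,w_k$ in general position), producing a compact semialgebraic polyhedron $X\subset\R^M$. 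Its $\tilde\R$-extension $\tilde X$ is $|\Sigma|_{\tilde\R}\subset\tilde\R^M$, since both are cut out by the same affine inequalities in barycentric coordinates. Finally, the piecewise-linear map $K\to\tilde X$ sending $v_i\mapsto w_i$ and extending linearly on each simplex is $\tilde\R$-semialgebraic and $\tilde\R$-continuous with $\tilde\R$-semialgebraic continuous inverse; composing with the triangulation of $\mathcal X$ gives the required homeomorphism.

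For (2), I would show that both $\tilde H_*(\tilde X)$ and $H_*(X)$ are naturally isomorphic to the simplicial homology $H^{\Delta}_*(\Sigma)$ of the common underlying abstract complex, and then compose the two isomorphisms. The identification $H_*(X)\cong H^{\Delta}_*(\Sigma)$ is the classical simplicial-to-singular comparison for a compact polyhedron. The identification $\tilde H_*(\tilde X)\cong H^{\Delta}_*(\Sigma)$ is provided by the semialgebraic singular homology theory of Delfs--Knebusch \cite{DK}, which is developed over an arbitrary real closed field and establishes this isomorphism for any closed bounded $\tilde\R$-semialgebraic polyhedron. The key input is $\tilde\R$-semialgebraic triangulation of maps: any $\tilde\R$-semialgebraic $\tilde\R$-continuous singular simplex $\sigma:\tilde\triangle^n\to\tilde X$ admits a finite $\tilde\R$-semialgebraic subdivision of its domain on which $\sigma$ is piecewise affine onto the simplices of $\tilde X=|\Sigma|_{\tilde\R}$, which feeds the acyclic-carrier machinery. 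Once both identifications are in place, the composite $\tilde H_*(\tilde X)\cong H^{\Delta}_*(\Sigma)\cong H_*(X)$ is well-defined and independent of the triangulation by standard arguments, hence natural.

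The main obstacle is the chain-level comparison $\tilde H_*(\tilde X)\cong H^{\Delta}_*(\Sigma)$ over the non-archimedean real closed field $\tilde\R$: classical arguments via iterated barycentric subdivision rely on compactness and Lebesgue numbers, which are unavailable over $\tilde\R$. I would bypass this by citing the Delfs--Knebusch semialgebraic homology theory, whose subdivision and acyclic-carrier arguments are designed to work uniformly over any real closed field, rather than adapting the euclidean arguments directly. A secondary delicate point is to check that the abstract complex extracted from the $\tilde\R$-triangulation really admits a real realization whose $\tilde\R$-extension matches the original up to $\tilde\R$-semialgebraic homeomorphism; this is handled by the fact that a geometric realization is determined functorially by the abstract complex together with vertex positions, so the two realizations differ only by a canonical piecewise-linear affine map.
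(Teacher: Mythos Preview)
Your argument for (1) is essentially the paper's: triangulate over $\tilde\R$, extract the abstract complex, realize it over $\R$, and observe that the two geometric realizations are related by the obvious PL isomorphism. No issue there.

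For (2) your strategy is correct in outline but differs from the paper's in execution, and there is one technical misstatement. You write that any $\tilde\R$-semialgebraic $\tilde\R$-continuous simplex $\sigma:\tilde\triangle^n\to\tilde X$ ``admits a finite $\tilde\R$-semialgebraic subdivision of its domain on which $\sigma$ is piecewise affine.'' That is false as stated: a general semialgebraic continuous map (already over $\R$, think $x\mapsto x^2$) is not PL after any subdivision. What is true, and what actually drives the acyclic-carrier or DK-style argument, is that $\sigma$ is $\tilde\R$-semialgebraically \emph{homotopic} to a PL map, relative to any subpolyhedron where it is already PL. This is precisely the simplicial homotopy theorem (Lemma~3.1 in \cite{SI}) that the paper isolates as Lemma~\ref{lem-homo} and uses as its key input.

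The paper then does not cite \cite{DK} as a black box but instead writes out the comparison by hand: it factors the natural map through a chain $H_*(X)\to H_*(K)\to H_*(\tilde K)\to\tilde H_*^L(\tilde X)\to\tilde H_*^{PL}(\tilde X)\to\tilde H_*(\tilde X)$ and proves the last arrow is an isomorphism directly at the chain level, by using Lemma~\ref{lem-homo} to deform semialgebraic chains to PL ones while controlling boundaries (with the canonical prism decomposition supplying the bounding $(n{+}1)$-chains). The reason for this explicit treatment is not that your citation route fails, but that the paper needs the same argument template again, under more delicate hypotheses, for the weak-continuity comparison in Proposition~\ref{prop-comp} and Lemmas~\ref{lem-homN}--\ref{lem-homN2}. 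Your approach buys brevity; the paper's buys a reusable mechanism.
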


The proof of $(2)$ follows from usual arguments of algebraic topology combined with the simplicial homotopy theorem (Lemma 3.1 in \cite{SI}). However, we write down the proof in full details because we will use similar arguments under more involved situations latter.

Let $X$ be a compact semialgebraic set, and let $K$ be a simplicial decomposition of $X$. Denote $\tilde K$ the extension of the simplexes of $K$, namely $\tilde K=\{\tilde \sigma: \sigma \in K\}$. Then $\tilde K$ is an $\tilde R$-simplicial complex whose (simplicial) homology $H_*(\tilde K)$ is isomorphic to the simplicial homology $H_*(K)$ of $K$ (and which is also isomorphic to the singular homology groups of $X$). We are going to relate $H_*(K)$ to $\tilde H_*(\tilde X)$ passing through homology groups related to $X$ and $K$.

Let denote by $\tilde S^{L}_n(\tilde X)$ the set of $\tilde R$-linear (relatively to $\tilde K$) maps from $\tilde \triangle^n$ to $\tilde X$, and by $\tilde H^{L}_n(\tilde X)$ the associated homology groups. Denote similarly by $\tilde S^{PL}_n(\tilde X)$ the set of $\tilde R$-piecewise linear maps from $\tilde \triangle^n$ to $\tilde X$, and by $\tilde H^{PL}_n(\tilde X)$ the associated homology groups. Then we have natural maps
$$H_n(X) \longrightarrow H_n(K) \longrightarrow H_n(\tilde K) \longrightarrow \tilde H_n^{L}(\tilde X) \longrightarrow \tilde H_n^{PL}(\tilde X) \longrightarrow \tilde H_n(\tilde X),$$
and the first fourth ones are isomorphisms by usual arguments in algebraic topology. The goal of the proof of the second part of Lemma \ref{lem-comp} is to see that the fifth one is also an isomorphism. 

Before entering into the details of the proof, we begin by recalling the statement of the simplicial homotopy theorem for the convenience of the reader.

\begin{lemma}(Lemma 3.1 in \cite{SI})\label{lem-homo} Let $\mathcal X$ and $\mathcal Y$ be closed and bounded $\wR$-polyhedra, and let $\phi:\mathcal X \longrightarrow \mathcal Y$ be a $\wR$-semialgebraic $\wR$-continuous map which is $\wR$-piecewise linear in restriction to a closed and bounded $\wR$-polyhedron $\mathcal X_0\subset \mathcal X$. Then $\phi$ is $\wR$-homotopic to a $\wR$-piecewise linear map, and the homotopy can be choosen to be fixed on $\mathcal X_0$.
\end{lemma}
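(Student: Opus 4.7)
My plan is to follow the classical simplicial approximation strategy, transferred to the real closed field $\wR$ via Tarski--Seidenberg, using a semialgebraic triangulation of $\phi$ compatible with $\mathcal X_0$.

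First, I would invoke the semialgebraic triangulation theorem for semialgebraic maps (valid over any real closed field): there exist $\wR$-simplicial decompositions $\mathcal K_1$ of $\mathcal X$ and $\mathcal K_2$ of $\mathcal Y$ such that $\mathcal X_0$ is a subcomplex of $\mathcal K_1$, the decomposition of $\mathcal X_0$ induced by $\mathcal K_1$ refines the $\wR$-linear structure of $\phi|_{\mathcal X_0}$ (so that $\phi|_{\mathcal X_0}$ is $\wR$-linear on each simplex of $\mathcal K_1$ lying in $\mathcal X_0$), and for every closed simplex $\sigma$ of $\mathcal K_1$ the image $\phi(\sigma)$ is contained in some closed simplex of $\mathcal K_2$. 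The last condition is the analogue of the simplicial approximation condition: it is obtained by first triangulating $\phi$ as a semialgebraic map (so that $\phi(\sigma)$ is a simplex of $\mathcal K_2$) or, less rigidly, by starlike subdivision of $\mathcal K_1$, using uniform $\wR$-continuity of $\phi$ on the closed and bounded $\mathcal X$ together with the Lebesgue-number argument over $\wR$.

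Next, I would build the $\wR$-PL approximation $\psi:\mathcal X\to\mathcal Y$ by setting $\psi(v)=\phi(v)$ on each vertex $v$ of $\mathcal K_1$ and extending $\wR$-linearly (in barycentric coordinates) on each closed simplex of $\mathcal K_1$. Two points to check: (i) on $\mathcal X_0$ we have $\psi=\phi$, because $\phi|_{\mathcal X_0}$ is already $\wR$-linear on each simplex of $\mathcal K_1\cap\mathcal X_0$ and therefore equals the linear extension of its vertex values; (ii) for any closed simplex $\sigma\in\mathcal K_1$ with $\phi(\sigma)\subset\tau$ for some closed simplex $\tau\in\mathcal K_2$, the vertices of $\sigma$ are mapped by $\psi=\phi$ into the convex set $\tau$, so the linear extension $\psi$ sends all of $\sigma$ into $\tau$.

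Then I would define the straight-line homotopy
\[
\theta_\lambda(x)=(1-\lambda)\phi(x)+\lambda\,\psi(x),\qquad (x,\lambda)\in\mathcal X\times[0,1]_{\wR}.
\]
By construction $\theta_\lambda$ is $\wR$-semialgebraic and $\wR$-continuous; for $\lambda=0$ it equals $\phi$ and for $\lambda=1$ it equals the PL map $\psi$. On each $\sigma\in\mathcal K_1$, both $\phi(\sigma)$ and $\psi(\sigma)$ lie in the same convex simplex $\tau\in\mathcal K_2$, so the whole segment $\theta_\lambda(x)$ stays in $\tau\subset\mathcal Y$. On $\mathcal X_0$, since $\psi=\phi$, we have $\theta_\lambda(x)=\phi(x)$ for every $\lambda$, so the homotopy is fixed on $\mathcal X_0$ as required.

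The main obstacle I anticipate is the first step: securing a $\wR$-semialgebraic triangulation of $\mathcal X$ that is simultaneously compatible with $\mathcal X_0$, refines the given PL structure of $\phi|_{\mathcal X_0}$, and is fine enough relative to $\mathcal K_2$ so that every simplex is sent into a closed simplex of the target. Over $\R$ this is the standard semialgebraic triangulation theorem together with a Lebesgue-number/subdivision argument; over $\wR$ the same statement holds by Tarski--Seidenberg transfer, but one has to be careful that \emph{linearity} and \emph{piecewise linearity} in the statement are understood with coefficients in $\wR$ rather than in $\R$, so that the transferred triangulation and the linear extension really produce an $\wR$-PL map. Once this triangulation is in hand, the construction of $\psi$ and of the straight-line homotopy is formal.
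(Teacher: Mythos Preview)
The paper does not give its own proof of this lemma; it is quoted from \cite{SI} (Lemma~3.1) and used as a black box. So there is no argument in the paper to compare against, only your attempt to supply one.

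Your strategy---simplicial approximation followed by a straight-line homotopy---is the natural one, and once a suitable $\mathcal K_1$ is in hand your construction of $\psi$ and of $\theta_\lambda$ is correct. The difficulty is entirely in step~1, and it is more serious than you acknowledge.

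The Lebesgue-number/subdivision route genuinely fails over $\wR$ because $\wR$ is non-Archimedean. The Lebesgue number of the open-star cover of $\mathcal K_2$ can be infinitesimal (take $\mathcal Y$ with an infinitesimal simplex), and then the $\delta$ from uniform continuity is infinitesimal as well. But a simplex of non-infinitesimal diameter in $\wR^n$ admits no \emph{finite} subdivision into $\wR$-simplices of diameter $<\delta$: already for $[0,1]_{\wR}$, any partition into $m$ subintervals has one of length $\ge 1/m$, and $1/m>\delta$ for every integer $m$ once $\delta\in\mathfrak m_+$. Your alternative, ``triangulating $\phi$ as a semialgebraic map'', is also unavailable: semialgebraic triangulation of maps is only known for targets of dimension $\le 1$, not for maps into a general polyhedron $\mathcal Y$; its failure in higher dimension is precisely what makes the Hauptvermutung of \cite{SI} non-trivial.

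Your closing appeal to Tarski--Seidenberg is the right rescue, but as written it is incomplete. The sentence ``there exists a PL subdivision $\mathcal K_1$ such that \ldots'' quantifies over simplicial complexes of unbounded size and is not first-order. What is missing is a uniform bound $N_0$, depending only on the combinatorial complexity of the formulae defining $\mathcal X,\mathcal Y,\mathcal X_0,\phi$, on the number of simplices required. Such a bound exists over $\R$: for fixed formulae, the minimal number of simplices needed is a semialgebraic $\N$-valued function of the real parameters, and a semialgebraic subset of $\N$ is finite, hence bounded. With $N_0$ fixed the statement becomes first-order and transfers to $\wR$. You should make this step explicit rather than leave it hidden behind ``the same statement holds by transfer''.
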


\begin{proof}[Proof of Lemma \ref{lem-comp}] For the proof of $(1)$, note that for any real closed field $R$, a closed and bounded $R$-semialgebraic set is $R$-homeomorphic to a closed and bounded $R$-polyhedron (\cite{PL}, Theorem 2.2). Moreover, a closed and bounded $R$-polyhedron is the underlying polyhedron of some finite $R$-simplicial complex (\cite{PL}, Theorem 2.11). Finally, a finite $R$-simplicial complex is defined by a finite set and finite relations between the elements of the set. As a consequence, a closed and bounded $\wR$-semialgebraic set is $\wR$-semialgebraic $\wR$-homeomorphic to $\tilde X$ for a compact polyhedron $X$.

\vskip 5mm

For the proof of $(2)$, consider a compact semialgebraic set $X$ and let $K$ be a simplicial decomposition of $X$. We are going to prove that the natural map 
$$\tilde H_n^{PL}(\tilde X) \longrightarrow \tilde H_n(\tilde X)$$
is an isomorphism.

Let focus first on injectivity. Let $\xi$ be a $\wR$-piecewise linear $n$-chain on $\tilde X$, and assume that $\xi$ is the boundary of a $\wR$-semialgebraic singular $n+1$-chains, namely there exist $l\in \mathbf N$, $\sigma_l \in \tilde S_{n+1}(\tilde X)$ and $m_i\in \mathbf Z$ for $i\in \{1,\ldots,l\}$, such that 
$$\xi=\sum_{i=1}^lm_i\partial \sigma_i$$
as a $\wR$-semialgebraic singular $n$-chain. We are going to deform $\sigma_1,\ldots,\sigma_l$ into $\wR$-piecewise linear $n+1$-simplexes using Lemma \ref{lem-homo}. Let $\Sigma$ denote the disjoint union of $l$ copies of $\tilde \triangle^{n+1}$, and define 
$$\sigma: \Sigma\longrightarrow \tilde X$$ 
to be the $\wR$-semialgebraic $\wR$-continuous map whose restriction to the $i$-th copy of $\tilde \triangle^{n+1}$ coincides with $\sigma_i$, for $i\in \{1,\ldots,l\}$. Let $\Sigma_0\subset \Sigma$ denote the union of those faces of the $i$-th copy of $\tilde \triangle^{n+1}$ where the $\wR$-semialgebraic map $\partial \sigma_i$ is already $\wR$-piecewise linear, for any $i\in \{1,\ldots,l\}$. Identify two $n$-dimensional faces $\tilde \triangle_{i_1}$ and $\tilde \triangle_{i_2}$ of the copies of $\tilde \triangle^{n+1}$ in $\Sigma$ through an $\wR$-linear isomorphism $\psi:\tilde \triangle_{i_1} \longrightarrow \tilde \triangle_{i_2}$ as soon as $\partial \sigma_{i_1}=\partial \sigma_{i_2} \circ \psi$ on $\tilde \triangle_{i_1}$. We denote by $\Sigma '$ the resulting closed and bounded $\wR$-polyhedron and by $\pi: \Sigma \longrightarrow \Sigma'$ the associated projection. 
Define a $\wR$-semialgebraic $\wR$-continuous map $\sigma': \Sigma'\longrightarrow \tilde X$ by $\sigma '\circ \pi=\sigma$, and set $\Sigma_0'=\pi(\Sigma_0)$. By Lemma \ref{lem-homo}, there exists a $\wR$-semialgebraic $\wR$-continuous map
$\theta:\Sigma'\times [0,1]_{\wR} \longrightarrow \tilde X$ such that 
\begin{enumerate}
\item[-] $\theta_{|\Sigma'\times\{0\}}$ coincides with $\sigma'$, 
\item[-] the restriction to $\Sigma_0'$ remains fixed, namely $\theta_{|\Sigma_0'\times [0,1]_{\wR}}=\sigma'_{|\Sigma_0'} \times \id$, 
\item[-] and finally $\theta_{|\Sigma'\times\{1\}}$ is $\wR$-piecewise linear. 
\end{enumerate}
As a consequence, denoting by $\sigma''_i:  \tilde \triangle^{n+1} \longrightarrow \tilde X$ the restriction of $\theta_{|\Sigma'\times\{1\}}$ to the $i$-th copy of $\tilde \triangle^{n+1}$, we obtain $\wR$-piecewise linear $n+1$-simplexes $\sigma_i''\in \tilde S_{n+1}^{PL}(\tilde X)$, for $i\in \{1,\ldots,l\}$. Moreover the equality
$$\sum_{i=1}^lm_i\partial \sigma_i=\sum_{i=1}^lm_i\partial \sigma_i''$$
holds by construction of $\Sigma'$, so that $\xi$ is the boundary of a $\wR$-piecewise linear chain.

\vskip 5mm

Concerning the surjectivity, consider a $\wR$-semialgebraic $n$-cycle $\xi$ on $\tilde X$, namely $\xi=\sum_{i=1}^lm_i \sigma_i$ with $\sigma_i \in \tilde S_n(\tilde X)$ and $\partial \xi=0$. We look for a $\wR$-piecewise linear $n$-chain $\mu$ on $\tilde X$ and a $\wR$-semialgebraic $n+1$-chain $\nu$ on $\tilde X$ such that $\xi=\mu-\partial \nu$. Define the polyhedrons $\Sigma$ and $\Sigma'$, the projection $\pi:\Sigma \longrightarrow
 \Sigma'$ together with the maps $\sigma: \Sigma \longrightarrow \tilde X$ and $\sigma': \Sigma' \longrightarrow \tilde X$ similarly as above in the proof of injectivity. 
 
 

We use Lemma \ref{lem-homo} which gives the existence of an $\wR$-semialgebraic $\wR$-continuous map $\theta':\Sigma'\times [0,1]_{\wR} \longrightarrow \tilde X$ such that $\theta'_{|\sigma'\times \{0\}}$ is equal to $\sigma'$ and $\theta'_{|\sigma'\times \{1\}}$ is $\wR$-piecewise linear. Set $\theta=\theta'\circ (\pi\times \id)$. Then
$$\theta:\Sigma\times [0,1]_{\wR} \longrightarrow \tilde X$$
is a $\wR$-semialgebraic $\wR$-continuous map such that $\theta_{|\sigma\times \{0\}}$ is equal to $\sigma$ and $\theta_{|\sigma\times \{1\}}$ is $\wR$-piecewise linear. Restricting to the $i$-th copy of $\tilde \triangle^n$ in $\Sigma$, we obtain moreover $\wR$-semialgebraic $\wR$-continuous maps
$$\theta_i:\tilde \triangle^n\times [0,1]_{\wR} \longrightarrow \tilde X$$
for $i\in\{1,\ldots,l\}$. These maps satisfy 
$$\sum_{i=1}^l m_i \partial ({\theta_i}_{|\tilde \triangle^n\times \{ \lambda \}})\leqno(1)$$
for any $\lambda \in [0,1]_{\wR}$. We set
$$\mu=\sum_{i=1}^l m_i {\theta_i}_{|\tilde \triangle^n\times \{ 1\}}.$$
Then $\mu$ is a $\wR$-piecewise linear $n$-chain on $\tilde X$ and $\partial \mu=0$. Note moreover that 
$$\xi-\mu=\sum_{i=1}^l m_i ({\theta_i}_{|\tilde \triangle^n\times \{ 0\}}-{\theta_i}_{|\tilde \triangle^n\times \{ 1\}}).\leqno(2)$$
In order to define $\nu$, consider the canonical $\tilde\R$-simplicial decomposition $\mathcal L$ of $\tilde\triangle^n\times[0,\,1]_{\tilde\R}$, namely each $\tilde\R$-simplex in $\mathcal L$ of dimension $n+1$ is spanned by 
$$(\tilde v_1,0),...,(\tilde v_k,0),(\tilde v_k,1),\ldots,(\tilde v_{n+1},1)$$ 
(and the vertices have this order) where 
$$v_1=0,\,v_2=(1,0,...,0),...,v_{n+1}=(0,...,0,1).$$
We denote by $\mathcal L_i$ the $\tilde\R$-simplicial decomposition of the $i$-th copy of $\tilde\triangle^n\times[0,\,1]_{\tilde\R}$ in $\Sigma\times[0,\,1]_{\tilde\R}$. Then the induced $\tilde\R$-simplicial decomposition $\mathcal L_\Sigma=\cup_{i=1}^l \mathcal L_i$ of $\Sigma\times[0,\,1]_{\tilde\R}$, satisfies
$$\mathcal L_\Sigma|_{\Sigma\times\{0\}}=\mathcal K\times\{0\},\ \mathcal L_\Sigma|_{\Sigma\times\{1\}}=\mathcal K\times\{1\},\ \mathcal L_\Sigma^0=\mathcal K^0\times\{0,1\}.$$
Note that the vertices of each $\tilde\R$-simplex in $\mathcal L_\Sigma$ have an order.
For each $i\in \{1,\ldots,l\}$ and $\omega\in \mathcal L_i$ of dimension $n+1$, consider $\omega$ as the standart $n+1$-simplexe $\tilde\triangle_{n+1}$ through the unique $\tilde\R$-linear isomorphism from $\tilde\triangle_{n+1}$ to $\omega$ which preserves the orders of the vertices, and set $\theta_{i,\omega}=\theta_i|_\omega$.
Set
$$\nu=\sum_{i=1}^l\sum_{\omega\in \mathcal L,\dim\omega=n+1}m_i \theta_{i,\omega}.$$
Then the boundary of $\nu$ is equal to $\xi-\mu$ by $(1)$ and $(2)$.
\end{proof}

We are going to use the method of proof of Lemma \ref{lem-comp} in order to prove that, if $\mathcal M$ is a closed and bounded $\tilde\R$-Nash manifold possibly with corners and induced from a Nash manifold $N$, then $\tilde H^w_{*}(\mathcal M)\to H_*(N)$ is an isomorphism.

\begin{prop}\label{prop-comp} Let $M$ be a compact Nash manifold possibly with corners.
Then there is a natural isomorphism $\tilde H^w_{*}(\tilde M)\to H_*(M)$.
\end{prop}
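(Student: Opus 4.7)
The plan is to follow the strategy of Lemma \ref{lem-comp}(2) and adapt it to the setting of weak singular chains, leveraging the existence of a Nash triangulation of the compact Nash manifold with corners $M$. Since Lemma \ref{lem-comp}(2) already supplies a natural isomorphism $\tilde H_*(\tilde M)\cong H_*(M)$, it will be enough to show that the forgetful map $\tilde H^w_*(\tilde M)\to \tilde H_*(\tilde M)$ (well-defined by Lemma \ref{lem-wcc}) is an isomorphism.

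For surjectivity I would first invoke a Nash triangulation of $M$: this produces a finite simplicial complex $K$ together with characteristic maps $\Phi_\sigma:\sigma\to M$ that are Nash on each closed simplex. Proposition \ref{prop-Nwc} guarantees that each extension $\tilde\Phi_\sigma:\tilde\sigma\to\tilde M$ is weakly continuous, so the simplicial chain complex of $\tilde K$ maps naturally into the weak singular chain complex $\tilde S^w_*(\tilde M)$. Composing with the forgetful maps yields a factorisation
$$H_*(K)\longrightarrow \tilde H^w_*(\tilde M)\longrightarrow \tilde H_*(\tilde M)\cong H_*(M)$$
of the classical isomorphism $H_*(K)\cong H_*(M)$, from which surjectivity of the middle arrow is immediate.

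For injectivity I would mimic the prism construction at the end of the proof of Lemma \ref{lem-comp}(2), but using a weakly continuous version of the simplicial homotopy theorem (Lemma \ref{lem-homo}) in place of the plain one. Concretely, the missing ingredient is: given a weakly continuous $\wR$-semialgebraic map $\phi$ from a closed and bounded $\wR$-polyhedron $\mathcal X$ to $\tilde M$, which coincides with a Nash-PL map on a subpolyhedron $\mathcal X_0$ (meaning an $\wR$-linear combination along a subdivision, pushed by the $\tilde\Phi_\sigma$), produce a weakly continuous $\wR$-semialgebraic homotopy from $\phi$ to a Nash-PL map, rel $\mathcal X_0$. Granted this, the canonical prism decomposition of $\tilde\triangle^n\times[0,1]_{\tilde\R}$ used in Lemma \ref{lem-comp}(2) produces a bounding chain built entirely by composing weakly continuous maps, hence weakly continuous by Proposition \ref{prop-compo}; the rest of the injectivity argument then transfers verbatim.

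The main obstacle will be the construction of this weakly continuous Nash-PL approximation. My plan is to embed $M\subset\R^N$ and fix a Nash tubular neighbourhood $r:U\to M$, so that by Proposition \ref{prop-Nwc} the extension $\tilde r$ is weakly continuous on every compact $\wR$-semialgebraic subset of $\tilde U$. After subdividing $\mathcal X$ finely enough so that, on each subsimplex, the $\wR$-linear interpolation $\phi_L$ of $\phi$ at the vertices stays inside $\tilde U$ along with the straight-line homotopy $h_s=(1-s)\phi+s\phi_L$, the composition $\tilde r\circ h_s$ provides a weakly continuous homotopy to a Nash-PL map landing in $\tilde M$. The delicate point will be to arrange the subdivision and the tubular radius uniformly in the Puiseux setting, in particular on subsimplices whose vertices lie in $\mathfrak m_+$; once this uniformity is secured, Proposition \ref{prop-compo} and Proposition \ref{prop-Nwc} together guarantee that every composite appearing in the prism construction is weakly continuous, and the argument of Lemma \ref{lem-comp}(2) carries over.
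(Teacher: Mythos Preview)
Your overall architecture—pass through an intermediate ``nice'' homology and prove a homotopy/approximation lemma in the weak category—mirrors the paper's. The paper factors
\[
\tilde H^w_*(\tilde M)\ \longleftarrow\ \tilde H^N_*(\tilde M)\ \longleftarrow\ H^N_*(M)\ \longrightarrow\ H_*(M),
\]
using \emph{Nash} singular simplexes over $\R$ as the intermediate (Lemmas~\ref{lem-homN} and~\ref{lem-homN2}), rather than routing through $\tilde H_*(\tilde M)$ and a Nash triangulation as you do; either intermediate is reasonable, and your surjectivity paragraph is fine.

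The genuine gap is exactly the point you label ``delicate''. For the straight-line homotopy $h_s=(1-s)\phi+s\phi_L$ to land in the tube $\tilde U$ you need $|\phi(\gamma)-\phi_L(\gamma)|<\tilde\epsilon$ with $\epsilon\in\R_+^*$ the tubular radius. But $\tilde\R$-uniform continuity of a weakly continuous $\phi$ gives a priori only an infinitesimal modulus $\delta\in\mathfrak m_+$ (compare Remark~\ref{rmk}(2)), so no finite barycentric subdivision of $\tilde\triangle^n$ with real vertices need achieve diameter $<\delta$. The paper's decisive tool here is Lemma~\ref{lem-tau}: the identity $\Upsilon\circ\phi=\Upsilon\circ\phi\circ(\Upsilon\times\cdots\times\Upsilon)$ says the constant term of $\phi(\gamma)$ depends only on $\Upsilon(\gamma)$, and $\Upsilon\circ\phi|_{\triangle^n}$ is a continuous semialgebraic function on the \emph{real} simplex. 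In Lemma~\ref{lem-homN2} one then Nash-approximates this real function by $g$ (rel boundary), and $\tilde g$ is an $\tilde a$-approximation of $\phi$ for a real $a>0$; the tubular retraction (weakly continuous by Proposition~\ref{prop-Nwc}) supplies the homotopy. Your linear-interpolation scheme can in fact be repaired with the same lemma—once $\Upsilon(\phi-\phi_L)$ is governed by a real subdivision of $\triangle^n$, the $\tilde\R$-inequality $|\phi-\phi_L|<\tilde\epsilon$ follows—but without Lemma~\ref{lem-tau} the ``uniformity in the Puiseux setting'' you are hoping for is simply not available from weak continuity alone.
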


In order to prove Proposition \ref{prop-comp}, we introduce as a tool another homology groups defined using Nash simplexes. Let $M$ be a Nash manifold possibly with corners.
Denote by $S^N_{n}(M)$ the set of Nash $n$-simplexes, namely Nash maps from $\triangle^n$ to $M$, and denote by $H^N_{*}(M)$ the corresponding homology groups. Similarly, denote by $\tilde S^N_{n}(\tilde M)$ the set of $\tilde\R$-extensions $\tilde u: \tilde \triangle^n \longrightarrow \tilde M$ of Nash $n$-simplexes $u\in S^N_{n}(M)$, and define the corresponding homology groups $\tilde H^N_{*}(\tilde M)$.

\begin{proof}[Proof of Proposition \ref{prop-comp}]
We have natural maps
$$\tilde H_*^w(\tilde M) \longleftarrow \tilde H^N_{*}(\tilde M) \longleftarrow H^N_{*}(M)\longrightarrow H_*(M),$$
the middle one being clearly an isomorphism. We prove in Lemma \ref{lem-homN} and Lemma \ref{lem-homN2} below that the two others are also isomorphisms.
\end{proof}

\begin{lemma}\label{lem-homN} Let $M$ be a compact Nash manifold possibly with corners.
Then the natural map from $H^N_{*}(M)$ to $H_*(M)$ is an isomorphism. 
\end{lemma}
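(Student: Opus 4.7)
The plan is to imitate the proof of Lemma \ref{lem-comp}(2), replacing the piecewise linear approximation provided by Lemma \ref{lem-homo} with a Nash approximation. The key input is a Nash triangulation of $M$: since $M$ is a compact Nash manifold possibly with corners, there exist a finite simplicial complex $K$ and a semialgebraic homeomorphism $|K|\to M$ whose restriction to each closed simplex of $K$ is a Nash embedding (Shiota's Nash triangulation theorem; see \cite{Shiota}). Under such a triangulation, a simplex-wise affine map from a polyhedron $\Sigma$ into $|K|$ corresponds to a map $\Sigma\to M$ that is Nash on each simplex, and therefore defines a Nash chain.

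From this I extract the Nash analog of Lemma \ref{lem-homo} that I will use: any continuous map $\phi\colon\Sigma\to M$ from a compact polyhedron $\Sigma$, Nash on each simplex of a closed subpolyhedron $\Sigma_0\subset\Sigma$, is homotopic relative to $\Sigma_0$ to a map which is Nash on each simplex of a suitable (iterated barycentric) subdivision of $\Sigma$. This follows from the classical relative simplicial approximation theorem applied via the Nash triangulation of $M$, using that within a single simplex of $|K|$ the linear interpolation between $\phi$ and its simplicial approximation is affine and therefore, under the Nash embedding of that simplex into $M$, Nash; the resulting homotopy is moreover Nash on each simplex of a product decomposition of $\Sigma\times[0,\,1]_{\R}$.

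Granted this tool, the proof proceeds exactly as in Lemma \ref{lem-comp}(2). For surjectivity of $H^N_*(M)\to H_*(M)$: given a singular cycle $\xi=\sum_{i=1}^l m_i\sigma_i$, I form the polyhedron $\Sigma$ as the disjoint union of $l$ copies of $\triangle^n$, glue matching faces along the identifications dictated by $\partial\xi=0$ to obtain $\Sigma'$, extend the $\sigma_i$ to a continuous $\sigma'\colon\Sigma'\to M$, and apply the Nash approximation to produce a Nash cycle $\mu$ homotopic to $\xi$; the homotopy $\theta$, Nash on each simplex of the canonical decomposition $\mathcal L$ of $\triangle^n\times[0,\,1]_{\R}$, yields a Nash $(n+1)$-chain $\nu$ with $\partial\nu=\xi-\mu$ via the usual formula. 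For injectivity: a Nash cycle which bounds a continuous $(n+1)$-chain gives a continuous map $\sigma'\colon\Sigma'\to M$ already Nash on the boundary subpolyhedron $\Sigma_0'$, so the relative Nash approximation yields a Nash $(n+1)$-chain with the same boundary. The main obstacle is the Nash relative simplicial approximation stated above; once that is in hand, the remainder is a verbatim repetition of the argument in Lemma \ref{lem-comp}(2).
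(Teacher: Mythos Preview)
Your overall scaffold is exactly the paper's: reduce to a relative Nash approximation lemma (the Nash analog of Lemma~\ref{lem-homo}) and then repeat the chain-level argument of Lemma~\ref{lem-comp}(2). The difference lies in how you propose to prove that approximation lemma, and there the argument has a genuine gap.

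Your claim is that a continuous map $\phi:\Sigma\to M$, Nash on each simplex of $\Sigma_0$, is homotopic \emph{rel $\Sigma_0$} to a map Nash on the simplices of some subdivision, and that this follows from relative simplicial approximation through a Nash triangulation $\psi:|K|\to M$. But relative simplicial approximation requires $\psi^{-1}\circ\phi|_{\Sigma_0}$ to be \emph{simplicial} in $|K|$; being Nash in $M$ does not give this. A Nash map $\sigma:\triangle^n\to M$ pulled back to $|K|$ is typically not piecewise linear, let alone affine into a single simplex, so you cannot freeze $\Sigma_0$ while approximating the rest. (The same issue shows up in your surjectivity paragraph: you say the homotopy $\theta$ is ``Nash on each simplex of the canonical decomposition of $\triangle^n\times[0,1]$'', but at the $t=0$ end $\theta$ equals the original continuous $\sigma_i$, which are not Nash; for surjectivity $\nu$ only needs to be singular, so this particular slip is harmless, but it signals the same confusion.) If you try to repair this by first homotoping the Nash boundary data to genuinely simplicial data, you must produce that homotopy inside the \emph{Nash} chain complex, and the linear interpolation $\psi\bigl((1-t)\psi^{-1}\sigma_i(x)+t\,\sigma_i'(x)\bigr)$ is not obviously Nash in $x$, because $\psi^{-1}\circ\sigma_i$ need not be Nash on the simplices of any subdivision of $\triangle^n$.

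The paper proves the relative Nash approximation differently, avoiding triangulations of $M$ altogether. One embeds $M$ (or a slightly enlarged $N$ containing $M$ in its interior) in $\R^m$ with a Nash tubular retraction $v:U\to N$; it then suffices to build a Nash map $g_1:\triangle^n\to\R^m$ that is $C^0$-close to $g$ and agrees with $g$ on $\partial\triangle^n$, since $h_s=v\bigl((1-s)g+sg_1\bigr)$ gives the homotopy. For this one works coordinatewise: extend the Nash boundary values $g|_{\partial\triangle^n}$ to a Nash function $g_2$ on $\triangle^n$ by the Nash extension theorem of Coste--Ruiz--Shiota, subtract to assume $g$ vanishes on (a neighbourhood of) $\partial\triangle^n$, write $g=q\prod_j l_j$ with $l_j$ the linear forms of the facets, and take a polynomial approximation $p$ of $q$; then $p\prod_j l_j$ is the desired Nash approximation vanishing on $\partial\triangle^n$. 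An induction on the number of simplices and on dimension handles the general $\Sigma,\Sigma_0,\Sigma'$ configuration. This produces a map globally Nash on each $\triangle^n$ (no subdivision), so the boundary is literally unchanged and the injectivity step goes through without any subdivision chain homotopy.
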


\begin{proof} The proof follows the same lines as the proof of Lemma \ref{lem-comp}. The only difference is that we need a counterpart for Lemma 2.1 in \cite{SI}.

If $M$ has corners, let $N$ be a compact Nash manifold with corners which contains $M$ in its interior and such that there exists a Nash isotopy $H_s:M \longrightarrow N$, with $s\in [0,1]$, such that $H_0=\id$ and $\Ima H_1 =N$ (such an isotopy exists by the proof of Theorem VI.2.1 in \cite{Shiota}). In case $M$ has no corners, set $N=M$.
Let $\Sigma$ be the union of $l\in \mathbf N$ copies of the standard $n$-simplex $\triangle^n$, and consider as in the proof of Lemma \ref{lem-comp} a union $\Sigma_0$ of some faces of the copies of $\triangle^n$ in $\Sigma$, together with a quotient space $\Sigma'$ obtained by identifying some proper faces of the copies of $\triangle^n$ through via linear isomorphisms, with the quotient map $\pi:\Sigma \longrightarrow \Sigma'$.

Let $g':\Sigma' \longrightarrow M$ be a continuous semialgebraic map such that the restriction of the continuous semialgebraic map $g=g'\circ \pi$ to any face in $\Sigma_0$ is a Nash map. Then what we are going to prove is the following statement.

\textit{(*) There exists an homotopy $h_s': \Sigma' \longrightarrow M_1$, for $s\in [0,1]$, such that the induced homotopy $h_s=h_s'\circ \pi$ satisfies $h_0=g$, in restriction to $\Sigma_0$ the maps $h_s$ coincide with $g$ and $h_1$ is a Nash map.}

In order to prove this statement, consider first the case where $l=1$ so that $\Sigma=\triangle^n$, and assume moreover that $g_{|\partial \triangle^n}$ is of class Nash. Thus we can suppose $\Sigma_0=\partial \Sigma$ and $\Sigma'=\Sigma$. Assume $N$ is included in $\mathbf R^m$ and let $v:U\longrightarrow N$ be a Nash tubular neighbourhood of $N$ in $\mathbf R^m$. In order to solve the problem for such $g$, it suffices to find a strong Nash approximation $g_1:\triangle^n\longrightarrow \mathbf R^m$ of $g$ in the $C^0$-topology such that $g_1=g$ on $\partial \triangle^n$ and $\Ima g_1\subset U$ because in that case the maps defined by
$$h_s(x)=v\big((1-s)g(x)+sg_1(x) \big),~~s\in [0,1]$$
gives a relevant homotopy. To construct such an approximation, it sufficies to consider the case $m=1$. Extend $g_{|\partial \triangle^n}$ to a Nash function $g_2$ on $\triangle^n$ by Proposition 0.7 in \cite{CRS}, so that, replacing $g$ with $g-g_2$ if necessary, we can suppose that $g$ vanishes on the boundary of $\triangle^n$. Replacing again $g$ with $g\circ w$, where $w:\triangle^n\longrightarrow \triangle^n$ is a continuous semialgebraic map closed to the identity such that the inverse image $w^{-1}(\partial \triangle^n)$ of the boundary of $\triangle^n$ is a neighbourhood of $\partial \triangle^n$ in $\triangle^n$, we can even suppose that $g$ vanishes on a neighbourhood of the boundary of $\triangle^n$.
Now we construct the approximation as follows. Let $l_j$ be linear functions on $\R^n$ whose zero sets are the linear spaces spanned by the faces of $\triangle^n$ of dimension $n-1$.
Define a continuous semialgebraic function $q$ on $\triangle^n$ by
$$
q=\left\{
\begin{array}{l}
0\qquad\qquad\,\text{on}\ \partial\triangle^n\\
\frac{g}{\prod_j l_j}\quad\qquad\,\text{on}\Int \triangle^n.
\end{array}
\right.
$$
Let $p$ be a Nash approximation (for example a polynomial approximation) of $q$.
Then $p\prod_j l_j$ is a relevant Nash approximation of $g$.

The proof of the general statement \textit{(*)} follows from this particular case by induction on the number $l$ of copies of $\triangle^n$ in $\Sigma$ and on the dimension $n$. More precisely, if $l>1$, let $\Sigma_1$ be one copy of $\triangle^n$ in $\Sigma$ and let $\Sigma_2$ be the union of the other copies. By the induction hypothesis we obtain a homotopy $h_s: \pi(\Sigma_2) \longrightarrow M_1$, and this homotopy induces maps from the union of faces in $\Sigma_1$ that are identified with faces in $\Sigma_2$ via $\pi$, namely maps from $\Sigma_1\cap \pi^{-1}(\pi(\Sigma_1)\cap \pi(\Sigma_2))$. We can extend this maps to give an homotopy on $\pi(\Sigma_1)$, fixed on $\pi(\Sigma_2\cap \Sigma_0)$. Using the induction hypothesis for the case $l=1$, and extending if necessary $N$ to a bigger Nash manifold with corners, we can ask moreover that the homotopy ends with a Nash map. Therefore it suffices to treat the case $l=1$.
Furthemore, we can assume that the restriction of $g$ to $\partial \Sigma$ is Nash by the induction hypothesis on the dimension, so that the particular case treated upstairs enables to achieve the proof.
\end{proof}

\begin{lemma}\label{lem-homN2} Let $M$ be a compact Nash manifold possibly with corners.
The natural map $\tilde H^N_{*}(\tilde M)\to \tilde H^w_{*}(\tilde M)$ is an isomorphism.
\end{lemma}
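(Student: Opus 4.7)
The plan is to imitate the argument of Lemma \ref{lem-comp} and Lemma \ref{lem-homN}, using Lemma \ref{lem-tau} as the bridge between a weakly continuous $\tilde\R$-semialgebraic simplex and an underlying continuous semialgebraic map on $\triangle^n$, and then invoking the Nash approximation machinery already built in the proof of Lemma \ref{lem-homN}.

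The central construction is a weak homotopy machine. Given a weakly continuous $\tilde\R$-semialgebraic map $\sigma:\tilde\triangle^n\to\tilde M$, Lemma \ref{lem-tau} gives a continuous semialgebraic map $g_\sigma:=\Upsilon\circ\sigma|_{\triangle^n}:\triangle^n\to M$; moreover $\Upsilon(\sigma(\gamma))=g_\sigma(\gamma(0))$ so the difference $\sigma(\gamma)-\widetilde{g_\sigma(\gamma(0))}$ is infinitesimal in $\tilde\R^m$. Choose an embedding $M\subset\R^m$ with a Nash tubular neighborhood $v:U\to M$ of width $\delta$, and choose a Nash map $u:\triangle^n\to M$ with $\|u-g_\sigma\|_{C^0}<\delta/3$ (this is possible by the strong Nash approximation constructed inside the proof of Lemma \ref{lem-homN}). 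Define
$$H_\sigma(\gamma,\lambda)=\tilde v\bigl(\lambda\,\sigma(\gamma)+(1-\lambda)\,\tilde u(\gamma)\bigr),\qquad \lambda\in[0,1]_{\tilde\R}.$$
Since $\Upsilon$ of the argument is $\lambda\,g_\sigma(\gamma(0))+(1-\lambda)\,u(\gamma(0))$, which lies in a $(\delta/3)$-neighborhood of $M$, and the remainder is infinitesimal, the argument lies in a compact subtube of $\tilde U$; thus $H_\sigma$ is well defined, and weak continuity follows from Proposition \ref{prop-Nwc} applied to $v$ on the compact subtube and to $u$, combined with Proposition \ref{prop-compo}. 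We have $H_\sigma(\cdot,1)=\sigma$ (since $v|_M=\id$) and $H_\sigma(\cdot,0)=\tilde u$.

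For surjectivity, take a weak singular cycle $\xi=\sum m_i\sigma_i$ with $\partial\xi=0$. Assemble the data into the polyhedron $\Sigma'$ and the map $\sigma':\Sigma'\to\tilde M$ exactly as in the proof of Lemma \ref{lem-comp}. Applying $\Upsilon$ fibrewise yields a continuous semialgebraic $g:\Sigma'\to M$ — compatibility across identified faces follows because the $\tilde\R$-linear face identifications $\psi$ of Lemma \ref{lem-comp} are determined by their action on vertices and therefore preserve real points, so $\Upsilon\circ(\sigma_j\circ\psi)=g_{\sigma_j}\circ\psi|_{\triangle^{n-1}}$. Now invoke the statement \emph{(*)} established in the proof of Lemma \ref{lem-homN}: $g$ admits a uniform Nash approximation $u:\Sigma'\to M$, coinciding with $g$ on any prescribed subpolyhedron where $g$ is already Nash. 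Applying the homotopy machine with this global $u$ to each $\sigma_i$ produces a weak homotopy from $\xi$ to $\sum m_i\tilde u_i\in \tilde S^N_n(\tilde M)$, so the image class equals the class of $\xi$. For injectivity, if $\xi=\sum m_i\tilde u_i$ is a Nash cycle with $\xi=\partial\eta$ for some weak chain $\eta=\sum n_j\tau_j$, run the same procedure on $\eta$, but this time using the "fixed on $\Sigma_0$" clause of statement \emph{(*)} to keep the approximation equal to the already-Nash simplexes $\tilde u_i$ on the corresponding boundary faces. This delivers a Nash chain $\eta'$ with $\partial\eta'=\xi$.

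The main obstacle is the combinatorial bookkeeping: performing the pointwise $\Upsilon$-projection, the Nash approximation, and the tubular-neighborhood lift simultaneously on all simplexes of a chain, consistently across all face identifications, while (in the injectivity step) keeping prescribed faces pointwise fixed. This is the reason one must borrow the full polyhedral framework $(\Sigma,\Sigma',\pi)$ of Lemma \ref{lem-comp} together with the refined, relative version of Nash approximation isolated as statement \emph{(*)} of Lemma \ref{lem-homN}. A subsidiary technical check is that the linear interpolation $\lambda\sigma+(1-\lambda)\tilde u$ does stay in a compact subtube of $\tilde U$ uniformly in $(\gamma,\lambda)$; this reduces, via Lemma \ref{lem-tau}, to the real $C^0$-control $\|u-g_\sigma\|<\delta/3$ provided by the approximation.
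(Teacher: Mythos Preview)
Your proof is correct and follows essentially the same route as the paper's: both reduce to the framework of Lemma~\ref{lem-homN}, use Lemma~\ref{lem-tau} to extract from a weakly continuous simplex $\sigma$ the continuous semialgebraic map $\Upsilon\circ\sigma|_{\triangle^n}$ on the real simplex, Nash-approximate it relative to the boundary via statement~\textit{(*)}, and then use the (weakly continuous, since real-induced) tubular projection to build the required weak homotopy. The paper's version is simply terser---it states only the substitute approximation step and leaves the combinatorial assembly implicit by pointing back to Lemma~\ref{lem-homN}---whereas you have written out the explicit interpolation $H_\sigma$ and the $(\Sigma,\Sigma',\pi)$ bookkeeping, but there is no genuine difference in strategy.
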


\begin{proof} 
We proceed as in the proof of Lemma \ref{lem-homN} (note that the orthogonal projection of a tubular $\wR$-neighbourhood of $\tilde M$ in $\wR^m$ is weakly continuous because it is induced from a tubular neighbourhood of $M$ in its ambient Euclidean space $\R^m$). 
In particular, it suffices find a substitute for the approximation argument. More precisely, we are going to prove that for any $a\in\R_+^*$, an $\wR$-semialgebraic weakly continuous function $\phi$ on $\tilde \triangle^n$, whose restriction to $\partial \tilde \triangle^n$ is the $\wR$-extension of some Nash function $h$ on $\partial\triangle^n$, is $\tilde a$-approximated by the $\wR$-extension $\tilde g$ of some Nash function $g$ on $\triangle^n$ whose restriction to $\partial \triangle^n$ coincides with $h$.

Indeed, the function 
$$\Upsilon \circ \phi_{| \triangle^n}: \triangle^n \longrightarrow \R$$
is a continuous semialgebraic function by Lemma \ref{lem-tau}, and its restriction to $\partial \triangle^n$ coincides with the Nash function $h$. Let $g$ be a Nash $a/2$-approximation of $\Upsilon \circ \phi_{| \triangle^n}$ whose restriction to $\partial \triangle^n$ coincides with $h$. Then $\tilde g$ gives a relevant approximation of $\phi$.
\end{proof}


\section{Homology of the Milnor fibre}\label{sect-homol}

Let $f$ be a polynomial function on $\R^n$. We may associate to $f$ positive and negative Milnor fibres as in \cite{MCP}. The positive Milnor fibre $F_f(r,a)$ of $f$ at $x_0\in f^{-1}(0)\subset \R^n$ is the semialgebraic subset of $\R^n$ defined by
$$F_f(r,a)=\{x\in \R^n:~~ |x-x_0| \leq r,~~f(x)=a\}$$
where $a>0\in \R$ and $r>0\in \R$ are sufficiently small so that $f$ is a locally trivial fibration on a neighbourhood of $x_0$ over $(0,a]$ with fibre $F_f(r,a)$.
We associate to $f$ and $x_0$ a set of Puiseux series $\mathcal F_f$ by
$$\mathcal F_{f,x_0}=\{\gamma \in \wR^n:~~ \gamma (0)=x_0,~~ f\circ \gamma (t) =t\}.$$
Note that $\mathcal F_{f,x_0}$ is a local $\wR$-Nash manifold, so that we can discuss about the $\wR$-semialgebraic singular homology groups and the $\wR$-semialgebraic weak singular homology groups of $\mathcal F_{f,x_0}$.
In this part, we are going to compare these homology groups of $\mathcal F_{f,x_0}$ with the (classical) homology groups of $F_f(r,a)$. To this aim, we study the Nash triviality of the family of Milnor fibres $F_f(r,a)$, with $a>0\in \R$ and $r>0\in \R$, together with 
its analog after extension into the real field of Puiseux series, namely the semialgebraic subsets $\tilde F_{\tilde f}(\rho,\alpha)$ defined by
$$\tilde F_{\tilde f}(\rho,\alpha)=\{\gamma \in \wR^n:~~ |\gamma-\tilde x_0| \leq \rho,~~\tilde f(\gamma)=\alpha \}$$
with $\alpha>0 \in \wR$ and $\rho>0 \in \wR$.

\vskip 5mm

Note that $F_f(r,a)$ is a Nash manifold with boundary and that $\tilde F_{\tilde f}(\rho,\alpha)$ is an $\tilde\R$-Nash manifold with boundary. 

In the following, we fix $x_0=0\in \R^n$ and denote simply $\mathcal F_{f,x_0}=\mathcal F_f$ for simplicity of notation. Remark that $\mathcal F_f \subset\{\pm\gamma:\gamma\in \mathfrak m_+\}^n$. Moreover, for $\alpha=t$ the extension $\tilde F_{\tilde f}(\rho,t)$ is equal in that case to
$$\tilde F_{\tilde f}(\rho,t)=\{\gamma \in \wR^n:~~ |\gamma| \leq \rho,~~\tilde f(\gamma)=t \}.$$

We can recover $\mathcal F_f$ as a union of some of these extended Milnor fibres.

\begin{lemma}\label{decomp} 
For any $\rho_0 \in \mathfrak m_+\cup \{0\}$, the set $\mathcal F_f$ is equal to the union of the $\tilde F_{\tilde f}(\rho,t)$ over $\rho \in \mathfrak m_+$ with $\rho > \rho_0$, namely
$$\mathcal F_f=\cup_{\rho \in \mathfrak m_+,~\rho > \rho_0} \tilde F_{\tilde f}(\rho,t).$$
\end{lemma}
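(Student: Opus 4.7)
The plan is to verify the two inclusions separately, relying on the basic observation that for $\gamma \in \wR^n$, the condition $\gamma(0) = 0$ is equivalent to $|\gamma| \in \mathfrak m_+ \cup \{0\}$. One direction: if $\gamma(0) = 0$, every component $\gamma_i$ has strictly positive $t$-adic valuation, hence so does $\sum \gamma_i^2$, and taking the square root in the real closed field $\wR$ gives $|\gamma|$ of strictly positive valuation (or $0$). Conversely, if $|\gamma|$ is infinitesimal then $|\gamma|^2$ is, and since each $\gamma_i^2$ is bounded above by $|\gamma|^2$ and non-negative, each $\gamma_i$ is infinitesimal or zero, forcing $\gamma(0) = 0$.

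For the inclusion $\supseteq$, take $\gamma \in \tilde F_{\tilde f}(\rho,t)$ with $\rho \in \mathfrak m_+$ and $\rho > \rho_0$. Then $0 \leq |\gamma| \leq \rho$, so $|\gamma|$ is infinitesimal (since $\mathfrak m_+$ is stable under taking smaller non-negative elements). By the observation above, $\gamma(0) = 0$, and combined with $\tilde f(\gamma) = t$ this gives $\gamma \in \mathcal F_f$.

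For the inclusion $\subseteq$, take $\gamma \in \mathcal F_f$. Since $t = \tilde f(\gamma) \neq 0$ we have $\gamma \neq 0$, and by the observation $|\gamma| \in \mathfrak m_+$. Now I split into cases according to $\rho_0$: if $|\gamma| > \rho_0$, I simply put $\rho = |\gamma|$, which lies in $\mathfrak m_+$, satisfies $\rho > \rho_0$, and clearly $|\gamma| \leq \rho$. If instead $|\gamma| \leq \rho_0$ (which can only happen when $\rho_0 \in \mathfrak m_+$), I use the fact that $\mathfrak m_+$ has no maximum to exhibit a suitable $\rho$; for example $\rho = 2\rho_0 \in \mathfrak m_+$ since the relation $\rho_0 < \tilde x$ for all $x \in (0, \infty)_\R$ is preserved under doubling by halving $x$, and then $|\gamma| \leq \rho_0 < \rho$.

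There is no serious obstacle here; the statement is essentially a translation lemma between the condition $\gamma(0) = 0$ used to define $\mathcal F_f$ and the valuation bound $|\gamma| \leq \rho$ used to define the extended Milnor fibres $\tilde F_{\tilde f}(\rho,t)$. The only subtlety to highlight is that the parameter $\rho_0$ plays no real role: the witness $\rho$ can always be taken either equal to $|\gamma|$ itself or, if $|\gamma|$ is too small, as a larger infinitesimal obtained by rescaling $\rho_0$, exploiting that the infinitesimal halo admits no maximum.
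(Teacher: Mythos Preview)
Your proof is correct and follows essentially the same approach as the paper's: both verify the two inclusions directly, using that $\gamma(0)=0$ is equivalent to $|\gamma|$ being infinitesimal, and that $\mathfrak m_+$ has no maximum. Your argument is simply more explicit than the paper's terse version, spelling out the equivalence and the case split on whether $|\gamma|>\rho_0$.
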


\begin{proof} Note first that $\tilde F_{\tilde f}(\rho,t) \subset \mathcal F_f$ for $\rho \in \mathfrak m_+$ since for $\gamma \in \wR$, if $|\gamma| \leq |\rho|$ then the limit of $\gamma (t)$ when $t$ goes to zero exist and satisfies $|\gamma(0)|\leq |\rho(0)|=0$. Conversely, it suffices to notice that for any $\gamma \in \mathfrak m_+$ is less than some element in $\mathfrak m_+$.
\end{proof}

\subsection{Nash triviality}

Let $M_1$ and $M_2$ be Nash manifolds possibly with corners, and let $M_3$ be a semialgebraic subset of $M_2$.
A Nash map $g:M_1\to M_2$ is called {\it Nash trivial} over $M_3$ if there is a Nash diffeomorphism $h:g^{-1}(x)\times M_3\to g^{-1}(M_3)$ for some $x\in M_3$ such that $g\circ h$ is the projection $g^{-1}(x)\times M_3\to M_3$.

We define similarly the $\tilde\R$-{\it Nash triviality} of a $\tilde\R$-Nash map.

\begin{rmk}
If $g:M_1\to M_2$ is Nash trivial over $M_3$, then $\tilde g:\tilde M_1\to\tilde M_2$ is $\tilde\R$-Nash trivial over $\tilde M_3$.
Indeed, let $h:g^{-1}(x)\times M_3\to g^{-1}(M_3)$ be a Nash diffeomorphism of Nash trivialization of $g: M_1\to M_2$.
Then $\tilde h:\tilde g^{-1}(\tilde x)\times\tilde M_3\to\tilde g^{-1}(\tilde M_3)$ is a $\tilde\R$-Nash $\tilde\R$-diffeomorphism of $\tilde\R$-Nash trivialization of $\tilde g:\tilde M_1\to\tilde M_2$.
\end{rmk}

We use classical triviality results in the Nash category to deal with the triviality of real Milnor fibres.

\begin{lemma}\label{lem-triv} \begin{flushleft}
\end{flushleft}
\begin{enumerate}
\item There exist $r_0\in\R_+^*$ and a non-negative continuous semialgebraic function $v$ on $[0,r_0]$ with zero set $\{0\}$ such that the projection map
$$\pi: \bigcup_{0<\alpha<\rho\in\tilde\R}\tilde F_{\tilde f}(\rho,\alpha)\times\{(\rho,\alpha)\} \longrightarrow \tilde\R^2$$
given by $(\gamma,\rho,\alpha)\mapsto(\rho,\alpha)$ is $\tilde\R$-Nash trivial over 
$$\mathcal D=\{(\rho,\alpha)\in\tilde\R^2:0<\rho\le\tilde r_0,\,0<\alpha\le\tilde v(\rho)\}.$$ 
In particular, $\tilde F_{\tilde f}(\rho_1,\alpha_1)$ and $\tilde F_{\tilde f}(\rho_2,\alpha_2)$ are $\tilde\R$-Nash $\tilde\R$-diffeomorphic for $(\rho_1,\alpha_1)$ and $(\rho_2,\alpha_2)$ in $\mathcal D$.

\item Moreover, the map $\pi$ is weakly continuously $\tilde\R$-Nash trivial over 
$$\mathcal D'=\{(\rho,\alpha)\in\tilde\R^2:\rho_0\le\rho\le\tilde r_0,\,\alpha_0\le\alpha \le\tilde v(\rho)\}$$
for any $0<\alpha_0<\rho_0\in\mathfrak m_+$.
In particular, $\tilde F_{\tilde f}(\rho_1,\alpha_1)$ and $\tilde F_{\tilde f}(\rho_2,\alpha_2)$ are weakly continuously $\tilde\R$-Nash $\tilde\R$-diffeomorphic for $(\rho_1,\alpha_1)$ and $(\rho_2,\alpha_2)$ in $\mathcal D'$.
\end{enumerate} 
\end{lemma}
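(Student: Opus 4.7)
The plan is to derive both parts from a parametric Nash triviality theorem applied to $f$ near $x_0=0$, and to upgrade to weak continuity in (2) using Proposition \ref{prop-Nwc}.

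For part (1), I would invoke the Nash triviality theorem of Coste--Shiota \cite{CS2} applied to the two-parameter family of Milnor fibres $F_f(r,a)$. It provides $r_0 \in \R_+^*$, a non-negative continuous semialgebraic function $v$ on $[0,r_0]$ with $v^{-1}(0)=\{0\}$, a base point $(r_1,a_1) \in D$ with $D = \{(r,a) : 0 < r \leq r_0,\ 0 < a \leq v(r)\}$, and a Nash diffeomorphism
$$h : F_f(r_1,a_1) \times D \longrightarrow \bigsqcup_{(r,a)\in D} F_f(r,a)\times\{(r,a)\}$$
compatible with the projections to $D$. Taking the $\tilde\R$-extension and using the remark preceding the lemma, $\tilde h$ provides the required $\tilde\R$-Nash $\tilde\R$-diffeomorphism of $\tilde\R$-Nash trivialization of $\pi$ over $\mathcal D = \tilde D$.

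For part (2), I would first arrange $v$ to be Nash by replacing the semialgebraic function of Coste--Shiota with a positive Nash minorant on $(0, r_0]$ vanishing at the origin; this makes the closure of $D$ a Nash manifold with corners on which $h$ is defined. Since $\rho_0, \alpha_0 \in \mathfrak m_+$ are strictly positive in $\tilde\R$, the set $\mathcal D'$ is a closed and bounded $\tilde\R$-semialgebraic subset of $\tilde D$, and hence $\tilde F_f(r_1, a_1) \times \mathcal D'$ is a compact $\tilde\R$-semialgebraic subset of $\tilde F_f(r_1, a_1) \times \tilde D$. Applying Proposition \ref{prop-Nwc} to $h$ and to its inverse yields that the restriction of $\tilde h$ to $\mathcal D'$, together with its inverse, is weakly continuous, giving the desired weakly continuous $\tilde\R$-Nash trivialization of $\pi$ over $\mathcal D'$.

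The main obstacle is ensuring that the data produced by the Nash triviality theorem is Nash rather than only continuous semialgebraic, so that the domain of $h$ is a Nash manifold with corners and Proposition \ref{prop-Nwc} applies. This is accommodated by replacing the original $v$ by a Nash minorant preserving the zero locus at the origin; the other boundary pieces of $D$ are automatically Nash, so this single adjustment suffices.
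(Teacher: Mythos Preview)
Your proposal is correct and follows essentially the same strategy as the paper: part~(1) via the Coste--Shiota Nash triviality theorem over $\R$ (the paper makes explicit that the projection and its restriction to the boundary are proper submersions over $D$, then invokes \cite{CS2}) followed by extension to $\tilde\R$, and part~(2) via Proposition~\ref{prop-Nwc}. One minor inaccuracy: it is $D$ itself (once $v$ is taken Nash), not its closure, that is the Nash manifold with corners on which $h$ is defined---the trivialization $h$ does not extend across the degenerate locus $r=0$ or $a=0$---but this does not affect your application of Proposition~\ref{prop-Nwc}, which only requires $D$ to be a Nash manifold with corners and $\mathcal D'$ to be closed and bounded in $\tilde D$.
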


\begin{proof} The proof of $(1)$ is a consequence of Theorem 3 in \cite{CS2}. To see this, note that the projection map
$$p: \bigcup_{0<a<r\in\R} F_{f}(r,a)\times\{(r,a)\} \longrightarrow \R^2$$
given by $(x,r,a)\mapsto(r,a)$ and its restriction to the boundaries
$$ \bigcup_{0<a<r\in\R} \partial F_{f}(r,a)\times\{(r,a)\} \longrightarrow \R^2$$
are proper and submersive onto
$$\{(r,a)\in \R^2:~0<a\ll r \ll 1\}.$$
In particular, there exist $r_0\in\R_+^*$ and a non-negative continuous semialgebraic function $v$ on $[0,r_0]$ with zero set $\{0\}$, such that these maps are proper and submersive onto
$$D=\{(r,a)\in \R^2:~r\leq r_0,~0<a\leq v(r)\}.$$
As a consequence $p$ is Nash trivial over $D$ by Theorem 3 in \cite{CS2}, and therefore $\pi$ is $\tilde R$-Nash trivial over $\tilde D=\mathcal D$. The second statement of the lemma follows from Proposition \ref{prop-Nwc}.
\end{proof}

Recall that $\mathcal A_f$ is the $\tilde\R$-algebraic set defined by $\tilde f$, namely
$$\mathcal A_f=\{\gamma\in\tilde\R^n:\tilde f(\gamma)=t\}.$$

\begin{cor}\label{cor-wctriv} The map 
$$\bigcup_{\rho\in\wR}\{\gamma\in \mathcal F_f:|\gamma|\le\rho\}\times\{\rho\}\longrightarrow \wR$$
given by $(\gamma,\rho)\mapsto \rho$
 and hence the map $\gamma\mapsto|\gamma|$ from $\mathcal F_f$ to $\wR$ are weakly continuously trivial over $\{\rho\in\mathfrak m_+:\rho\ge\rho_0\}$ for some $\rho_0\in\mathfrak m_+$.

Moreover, the map
$$\bigcup_{\rho\in\wR}\{\gamma\in\mathcal A_f,\,|\gamma|\le\rho\}\times\{\rho\}\longrightarrow \tilde\R$$
given by $(\gamma,\rho)\mapsto \rho$ and the map $\gamma\mapsto|\gamma|$ from $\mathcal A_f$ to $\wR$ are weakly continuously trivial over $[\rho_0,\,\tilde r_0]_{\tilde\R}$ for some $r_0\in\R$.
\end{cor}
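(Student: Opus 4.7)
The plan is to apply Lemma \ref{lem-triv}(2) in order to obtain a weakly continuous $\wR$-Nash trivialization of $\pi$ over a domain containing the horizontal slice $\alpha=t$, and then restrict that trivialization to the slice.

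First I would fix $r_0\in\R_+^*$ and $v$ as produced by Lemma \ref{lem-triv}(1); by shrinking $r_0$ if needed, I may assume $v$ is strictly increasing on $[0,r_0]$, so that $\tilde v$ is a strictly increasing $\wR$-continuous bijection $[0,\tilde r_0]_{\wR}\to[0,v(r_0)]_{\wR}$. Since $t$ is infinitesimal and $v(r_0)$ is a positive real, $t$ lies in $[0,v(r_0)]_{\wR}$, so there exists a unique $\rho_0\in\mathfrak m_+$ with $\tilde v(\rho_0)=t$; I then choose any $\alpha_0\in\mathfrak m_+$ with $0<\alpha_0<\rho_0$. By monotonicity, $\tilde v(\rho)\ge\tilde v(\rho_0)=t\ge\alpha_0$ for every $\rho\in[\rho_0,\tilde r_0]_{\wR}$, so the horizontal slice $\{(\rho,t):\rho\in[\rho_0,\tilde r_0]_{\wR}\}$ lies inside the region
$$\mathcal D'=\{(\rho,\alpha)\in\wR^2:\rho_0\le\rho\le\tilde r_0,\,\alpha_0\le\alpha\le\tilde v(\rho)\}$$
appearing in Lemma \ref{lem-triv}(2).

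Applying Lemma \ref{lem-triv}(2) with these parameters yields a weakly continuous $\wR$-Nash trivialization of $\pi$ over $\mathcal D'$. Restricting it to the slice above produces a weakly continuous $\wR$-Nash $\wR$-diffeomorphism
$$\tilde F_{\tilde f}(\rho_0,t)\times[\rho_0,\tilde r_0]_{\wR}\longrightarrow\bigcup_{\rho\in[\rho_0,\tilde r_0]_{\wR}}\tilde F_{\tilde f}(\rho,t)\times\{\rho\}$$
commuting with the projection onto $[\rho_0,\tilde r_0]_{\wR}$. Since $\{\gamma\in\mathcal A_f:|\gamma|\le\rho\}=\tilde F_{\tilde f}(\rho,t)$, this is the trivialization of the first map in part (2). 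For the map $\gamma\mapsto|\gamma|$ on $\mathcal A_f$, I would further restrict this diffeomorphism to the boundaries $\{|\gamma|=\rho\}$ of the fibres, using that a Nash diffeomorphism of Nash manifolds with boundary sends boundary to boundary.

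For part (1) I would observe that since $\tilde r_0$ is not infinitesimal, $\{\rho\in\mathfrak m_+:\rho\ge\rho_0\}\subset[\rho_0,\tilde r_0]_{\wR}$; moreover, for $\rho\in\mathfrak m_+$ the condition $|\gamma|\le\rho$ forces $\gamma(0)=0$, so $\{\gamma\in\mathcal A_f:|\gamma|\le\rho\}=\{\gamma\in\mathcal F_f:|\gamma|\le\rho\}$. Part (1) is therefore obtained by further restricting the trivialization above to this subbase, and the corresponding statement for $\gamma\mapsto|\gamma|$ on $\mathcal F_f$ follows once more by restriction to fibre boundaries. The only substantive point in the whole argument is the simultaneous choice of $\rho_0$ and $\alpha_0$ arranging that the slice $\alpha=t$ sits inside the domain of weak continuity $\mathcal D'$ granted by Lemma \ref{lem-triv}(2); the rest is bookkeeping of restrictions of the trivialization to subsets of the base and to boundaries of the fibres.
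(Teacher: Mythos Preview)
Your approach is essentially the paper's own: specialize Lemma~\ref{lem-triv}(2) to the slice $\alpha=t$, using that $\tilde v(\tilde r_0)=v(r_0)\in\R_+^*$ dominates $t$, and then identify $\{\gamma\in\mathcal A_f:|\gamma|\le\rho\}$ with $\tilde F_{\tilde f}(\rho,t)$ and, for $\rho\in\mathfrak m_+$, with $\{\gamma\in\mathcal F_f:|\gamma|\le\rho\}$.

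One small slip: from ``$0<\alpha_0<\rho_0$'' alone you cannot conclude ``$t\ge\alpha_0$'', since $\rho_0$ is defined by $\tilde v(\rho_0)=t$ and there is no a priori comparison between $\rho_0$ and $t$ (it depends on the growth of $v$). You need to choose $\alpha_0$ with $\alpha_0\le t$ as well, e.g.\ $\alpha_0=\min(\rho_0,t)/2$; the rest of the argument is then fine.
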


\begin{proof} We apply Lemma \ref{lem-triv} by specifying $\alpha$ to the value $t$. This is possible because $t< \tilde v (\tilde r_0)$ since the semialgebraic function $v$ provided by Lemma \ref{lem-triv} is defined over $\R$, so that $\tilde v (\tilde r_0)$ belongs to $\R_+^*\subset \wR$.
It remains to note that 
$$\mathcal F_f=\cup_{\rho \in \mathfrak m_+} \tilde F_{\tilde f}(\rho,t).$$
\end{proof}

\begin{rmk}\label{rmk-cor} We have deduced  Corollary \ref{cor-wctriv} from Lemma \ref{lem-triv} by specifying for $\alpha$ the value $t\in (0,\tilde v(\tilde r_0)]_{\wR}$. In the proof of Theorem \ref{thm-hom} below, we will use the same result for a real value for $\alpha$, so belonging to $(0,v(r_0)]$.
\end{rmk}

\subsection{Comparison of homology groups}

The goal of this section is to prove that the natural maps from $\tilde H^w_{*}(\mathcal F_f)$ to $\tilde H_*(\mathcal F_f)$ and from $\tilde H_*(\mathcal F_f)$ to $H_*(F_f(a,r))$ are isomorphisms, for $a\in \R_+^*$ and $r\in \R_+^*$ small enough.

\begin{thm}\label{thm-hom} Let $f$ be a polynomial function on $\R^n$ vanishing at $0$.
Then there exist natural isomorphisms 
$$\tilde H^w_{*}(\mathcal F_f)\to\tilde H_*(\mathcal F_f)\to H_*(F_f(r,a))$$
with $0<a\ll r\ll 1$ small enough.
\end{thm}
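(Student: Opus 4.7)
The plan is to funnel both natural maps through an intermediate space $\tilde F_{\tilde f}(\rho_1,t)$ for a carefully chosen $\rho_1\in\mathfrak m_+$: first exhibit $\tilde F_{\tilde f}(\rho_1,t)$ as a strong deformation retract of $\mathcal F_f$ (simultaneously $\wR$-continuously and weakly continuously), and then identify $\tilde F_{\tilde f}(\rho_1,t)$ with the extension $\widetilde{F_f(r,a)}$ of the real Milnor fibre, whose homologies are controlled via Lemma~\ref{lem-comp} and Proposition~\ref{prop-comp}. Concretely, fix $r_0$ and $v$ from Lemma~\ref{lem-triv}, real numbers $r\in(0,r_0]$ and $a\in(0,v(r)]$, and $\rho_0,\alpha_0\in\mathfrak m_+$ with $\alpha_0<\rho_0$ and $\alpha_0<t$ as in Corollary~\ref{cor-wctriv} and Lemma~\ref{lem-triv}(2). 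Then pick $\rho_1\in\mathfrak m_+$ with $\rho_1\geq\rho_0$ and $\tilde v(\rho_1)\geq t$; such a choice exists because $\tilde v(\rho)$ tends to the strictly positive real $v(r_0)$ as $\rho$ runs through $\mathfrak m_+$ towards $\tilde r_0$. The entire slice $\{(\rho,t):\rho\in[\rho_1,\tilde r_0]_{\wR}\}$ then lies in both $\mathcal D$ and $\mathcal D'$.

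\textbf{Deformation retract.} Along this slice the trivialization of Lemma~\ref{lem-triv} yields an $\wR$-Nash and weakly continuous family of $\wR$-diffeomorphisms $\psi_\rho:\tilde F_{\tilde f}(\rho_1,t)\to\tilde F_{\tilde f}(\rho,t)$, normalised so that $\psi_{\rho_1}=\id$. Define $r_s:\mathcal F_f\to\mathcal F_f$, for $s\in[0,1]_{\wR}$, by
$$r_s(\gamma)=\begin{cases}\gamma & \text{if }|\gamma|\leq\rho_1,\\ \psi_{(1-s)|\gamma|+s\rho_1}\bigl(\psi_{|\gamma|}^{-1}(\gamma)\bigr) & \text{if }|\gamma|>\rho_1.\end{cases}$$
The family $(r_s)_{s\in[0,1]_{\wR}}$ is $\wR$-semialgebraic, $\wR$-continuous and weakly continuous (since so are the $\psi_\rho$); it glues consistently at $|\gamma|=\rho_1$ because $\psi_{\rho_1}=\id$, satisfies $r_0=\id_{\mathcal F_f}$, leaves $\tilde F_{\tilde f}(\rho_1,t)$ pointwise fixed for every $s$, and $r_1$ retracts $\mathcal F_f$ onto $\tilde F_{\tilde f}(\rho_1,t)$. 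By the homotopy axiom, valid for both Eilenberg-Steenrod systems $\tilde H_*$ and $\tilde H^w_*$, the inclusion $\tilde F_{\tilde f}(\rho_1,t)\hookrightarrow\mathcal F_f$ induces isomorphisms on both theories.

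\textbf{Conclusion and main obstacle.} Applying Lemma~\ref{lem-triv} to the pairs $(\rho_1,t)$ and $(\tilde r,\tilde a)$, both in $\mathcal D\cap\mathcal D'$, produces an $\wR$-Nash and weakly continuous $\wR$-diffeomorphism $\tilde F_{\tilde f}(\rho_1,t)\cong\widetilde{F_f(r,a)}$. Since $F_f(r,a)$ is a compact Nash manifold with corners, Lemma~\ref{lem-comp} gives $\tilde H_*(\widetilde{F_f(r,a)})\cong H_*(F_f(r,a))$ and Proposition~\ref{prop-comp} gives $\tilde H^w_*(\widetilde{F_f(r,a)})\cong H_*(F_f(r,a))$. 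Composing with the deformation retract produces the stated isomorphisms, and the natural transformation from $\tilde H^w_*$ to $\tilde H_*$ induced by the inclusion of weakly continuous simplexes into $\wR$-continuous ones (via Lemma~\ref{lem-wcc}) makes the full triangle commute. The main obstacle is the simultaneous control of the parameters so that both parts of Lemma~\ref{lem-triv} apply along the slice $\alpha=t$ throughout $[\rho_1,\tilde r_0]_{\wR}$, which reduces to the estimate $\tilde v(\rho)\geq t\geq\alpha_0$ uniformly on that slice; this is delicate because $\tilde v(\rho)$ is itself infinitesimal for $\rho\in\mathfrak m_+$, but can be made to exceed $t$ by pushing $\rho_1$ sufficiently high in $\mathfrak m_+$, using the continuity and strict positivity of $v$ on $(0,r_0]$.
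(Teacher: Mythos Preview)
Your strategy matches the paper's: reduce to a closed bounded $\wR$-Nash manifold via the triviality of Lemma~\ref{lem-triv}/Corollary~\ref{cor-wctriv}, then invoke Lemma~\ref{lem-comp} and Proposition~\ref{prop-comp}. The paper packages the first step as Lemma~\ref{lem-homeq}/Corollary~\ref{cor-iso}, obtaining that the inclusions $\tilde F_{\tilde f}(\rho_0,t)\hookrightarrow\mathcal F_f\hookrightarrow\tilde F_{\tilde f}(\tilde r_0,t)$ induce isomorphisms on both $\tilde H_*$ and $\tilde H^w_*$. It then works with $\tilde F_{\tilde f}(\tilde r_0,t)$ for \emph{real} $r_0$ and moves $\alpha$ from $t$ to a real value $a_0$ (Remark~\ref{rmk-cor}). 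This sidesteps your ``main obstacle'' entirely: since $r_0$ is real, $\tilde v(\tilde r_0)=v(r_0)\in\R_+^*$ automatically dominates the infinitesimal $t$, so no delicate choice of an infinitesimal $\rho_1$ with $\tilde v(\rho_1)\ge t$ is needed.

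There is also a technical wrinkle in your explicit retraction that the paper's construction avoids. Your formula $r_s(\gamma)=\psi_{(1-s)|\gamma|+s\rho_1}\bigl(\psi_{|\gamma|}^{-1}(\gamma)\bigr)$ feeds $|\gamma|$ into the trivialisation, so its weak continuity requires that $\gamma\mapsto|\gamma|$ be weakly continuous on $\mathcal F_f$; but the square root in $|\gamma|$ is not weakly continuous in general (already for $f(x,y)=x$, along $\gamma_2=c\,t+bt^{1/2}$ the $t^{3/2}$-coefficient of $|\gamma|$ blows up as $b\to 0$), and the weak continuity of the $\psi_\rho$ alone does not repair this. The homotopy in Lemma~\ref{lem-homeq}, by contrast, is $\theta_\lambda(\gamma)=\nu\circ\psi\bigl(\gamma,\lambda(\rho_0-\tilde r_0)+\tilde r_0\bigr)$, where the second argument depends only on $\lambda$; it never evaluates $|\gamma|$, so its weak continuity follows directly from that of $\psi$. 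Rewriting your retraction in that form (or simply citing Corollary~\ref{cor-iso}) fixes the issue.
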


First, we reduce the problem to the closed and bounded case by the following lemma.

\begin{lemma}\label{lem-homeq}
Let $\mathcal M$ be a local $\tilde\R$-Nash manifold possibly with corners, and $\mathcal N$ be a closed and bounded $\tilde\R$-Nash manifold possibly with corners. 
Let $\phi:\mathcal N\to\tilde\R$ be a positive $\tilde\R$-Nash function such that $\phi^{-1}(\mathfrak m_+)=\mathcal M$ and assume that the map 
$$\pi:\bigcup_{\rho\in\tilde\R}\{\gamma\in \mathcal N:\phi(\gamma)\le\rho\}\times\{\rho\}\longrightarrow \tilde\R$$ 
defined by $(\gamma,\rho)\to\rho$
is weakly continuously $\tilde\R$-Nash trivial over $[\rho_0,\,\tilde r_0]_{\tilde\R}$ for some $\rho_0\in\mathfrak m_+$ and $r_0\in\R_+^*$, via a trivialisation already defined over $\R$.

Then the inclusions $\mathcal M\to \phi^{-1}((0,\,\tilde r_0]_{\tilde\R})$ and $\phi^{-1}((0,\,\rho_0]_{\tilde\R})\to \mathcal M$ are local $\tilde\R$-semialgebraic weak homotopy equivalences.
\end{lemma}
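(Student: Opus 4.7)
The strategy is to build a single local $\tilde\R$-semialgebraic weakly continuous deformation retraction $H_\lambda$ of $\phi^{-1}((0,\tilde r_0]_{\tilde\R})$ onto $\phi^{-1}((0,\rho_0]_{\tilde\R})$ whose restriction to $\mathcal M$ is still a deformation retraction onto $\phi^{-1}((0,\rho_0]_{\tilde\R})$. Together with the chain of inclusions $\phi^{-1}((0,\rho_0]_{\tilde\R})\subset\mathcal M\subset\phi^{-1}((0,\tilde r_0]_{\tilde\R})$, these two deformation retractions immediately imply that the two inclusions in the statement are local $\tilde\R$-semialgebraic weak homotopy equivalences, with $H_1$ providing a common weak homotopy inverse.

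The homotopy is extracted from the trivialisation. I write the given trivialisation as $h(\eta,\rho)=(h_\rho(\eta),\rho)$, where $h_\rho:\{\phi\le\rho_0\}\to\{\phi\le\rho\}$ is an $\tilde\R$-Nash diffeomorphism for each $\rho\in[\rho_0,\tilde r_0]_{\tilde\R}$, and I precompose with $h_{\rho_0}^{-1}\times\id$ so that $h_{\rho_0}=\id$. Because the trivialisation is induced from a real Nash one, both $h$ and $h^{-1}$ are weakly continuous on closed and bounded pieces by Proposition~\ref{prop-Nwc}; the same proposition also applies to $\phi$ on the relevant compact sublevel sets. I then define
$$H_\lambda(\gamma)=\begin{cases}\gamma & \text{if } \phi(\gamma)\le\rho_0,\\ h_{(1-\lambda)\phi(\gamma)+\lambda\rho_0}\bigl(h_{\phi(\gamma)}^{-1}(\gamma)\bigr) & \text{if } \phi(\gamma)\ge\rho_0,\end{cases}$$
the two cases agreeing on $\{\phi=\rho_0\}$ since $h_{\rho_0}=\id$. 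Direct inspection gives $H_0=\id$, $H_1(\phi^{-1}((0,\tilde r_0]_{\tilde\R}))\subset\phi^{-1}((0,\rho_0]_{\tilde\R})$, and $H_\lambda$ fixes $\phi^{-1}((0,\rho_0]_{\tilde\R})$ pointwise. For the preservation of $\mathcal M$, note that $H_\lambda(\gamma)$ lies in $\{\phi\le(1-\lambda)\phi(\gamma)+\lambda\rho_0\}$; if $\gamma\in\mathcal M$ with $\phi(\gamma)\ge\rho_0$, this upper bound is a convex combination of two elements of $\mathfrak m_+$ and hence still in $\mathfrak m_+$, so $H_\lambda(\gamma)\in\mathcal M$.

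The $\tilde\R$-semialgebraicity of $H$ is transparent, and on each of the two closed semialgebraic pieces $\{\phi\le\rho_0\}$ and $\{\phi\ge\rho_0\}$, weak continuity of $H$ follows from Proposition~\ref{prop-compo} applied to the composition of the weakly continuous maps $h$, $h^{-1}$, $\phi$ and the polynomial blending $(1-\lambda)\phi(\gamma)+\lambda\rho_0$. The main obstacle I anticipate lies in ensuring that the glued piecewise map is weakly continuous on the whole domain: within each slice $\tilde\R_{p,q}^n$ a sequence may approach the gluing locus $\{\phi=\rho_0\}$ from both sides, so one must verify that the two branches have matching weakly continuous extensions across this locus. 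The normalisation $h_{\rho_0}=\id$ makes the two formulas coincide on the locus, reducing the question to the standard pasting argument for closed semialgebraic partitions inside each $\tilde\R_{p,q}^n$.
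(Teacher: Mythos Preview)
Your approach is correct but takes a more elaborate route than the paper. You build a genuine deformation retraction fixing $\phi^{-1}((0,\rho_0])$ pointwise via a piecewise formula, which forces you to confront the gluing along $\{\phi=\rho_0\}$; you rightly flag this as the delicate step and resolve it with the pasting lemma inside each $\tilde\R_{p,q}^n$. The paper instead writes down a single global homotopy
\[
\theta_\lambda(\gamma)=\nu\circ\psi\bigl(\gamma,\,(1-\lambda)\tilde r_0+\lambda\rho_0\bigr),
\]
where $\psi$ is the trivialisation normalised at the large end $\tilde r_0$ and $\nu$ is the projection to $\mathcal N$. This $\theta_\lambda$ is \emph{not} a deformation retraction (it moves points of $\phi^{-1}((0,\rho_0])$), but it satisfies $\theta_0=\id$, $\theta_1(\phi^{-1}((0,\tilde r_0]))\subset\phi^{-1}((0,\rho_0])$, and $\theta_\lambda(\mathcal M)\subset\mathcal M$; that is already enough to make both inclusions weak homotopy equivalences (the inclusion $\phi^{-1}((0,\rho_0])\hookrightarrow\mathcal M$ follows by a two-out-of-three argument). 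The payoff of the paper's formula is that it avoids any piecewise definition and never uses $\phi$ inside the homotopy, so weak continuity is immediate from Proposition~\ref{prop-compo} applied to $\psi$ and a polynomial reparametrisation. Your construction buys a pointwise-fixing retraction, at the cost of the gluing and of needing $\phi$ itself to be weakly continuous.

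One caveat worth making explicit: you invoke Proposition~\ref{prop-Nwc} for $\phi$, but that proposition is about the $\tilde\R$-extension of a \emph{real} Nash map, while the lemma only literally assumes $\phi$ is $\tilde\R$-Nash; by Remark~\ref{rmk}(2) such a map need not be weakly continuous. In the intended application $\phi(\gamma)=|\gamma|$ does come from $\R$, and the clause ``trivialisation already defined over $\R$'' effectively forces the sublevel sets of $\phi$ --- hence $\phi$ itself --- to be real; but you should say so rather than cite Proposition~\ref{prop-Nwc} directly.
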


\begin{proof} We are going to construct a relevant $\wR$-semialgebraic weak homotopy. Let denote by $\mathcal N_{\rho_0}$ and $\mathcal N_{\tilde r_0}$ respectively the $\wR$-Nash manifolds $\phi^{-1}((0,\,\rho_0]_{\tilde\R})$ and $\phi^{-1}((0,\,\tilde r_0]_{\tilde\R})$. By assumption, there exist a weakly continuous $\wR$-Nash $\wR$-diffeomorphism 
$$\psi:\mathcal N_{\tilde r_0}\times [\rho_0,\tilde r_0]_{\wR} \longrightarrow \bigcup_{\rho\in [\rho_0,\tilde r_0]_{\wR}}\{\gamma\in \mathcal N:\phi(\gamma)\le\rho\}\times\{\rho\}$$
such that $\pi \circ \psi$ is equal to the projection map
$$\mathcal N_{\tilde r_0} \times [\rho_0,\tilde r_0]_{\wR} \longrightarrow [\rho_0,\tilde r_0]_{\wR}.$$
We can assume moreover that $\psi$ is the identity map on restriction to $\mathcal N_{\tilde r_0} \times \{\tilde r_0\}$. Let denote by $\nu$ the projection
$$\nu:\bigcup_{\rho\in\tilde\R}\{\gamma\in \mathcal N:\phi(\gamma)\le\rho\}\times\{\rho\}\longrightarrow \wR^n$$ 
defined by $(\gamma,\rho)\to\gamma$. Then we construct a $\wR$-semialgebraic weak homotopy $\theta_{\lambda}: \mathcal N_{\tilde r_0} \longrightarrow \mathcal N_{\tilde r_0}$ by defining
$$\theta_{\lambda}(\gamma)=\nu \circ \psi \big(\gamma, \lambda(\rho_0-\tilde r_0)+\tilde r_0 \big)$$
for $\lambda \in [0,1]_{\wR}$ and $\gamma \in \mathcal N_{\tilde r_0}$. In particular $\theta_0$ is the identity map on $\mathcal N_{\tilde r_0}$, the image of $\mathcal N_{\tilde r_0}$ under $\theta_1$ is equal to $\mathcal N_{\rho_0}$ and moreover $\theta_{\lambda}(\mathcal M)\subset \mathcal M$ for any $\lambda \in [0,1]_{\wR}$ since the trivialisation $\psi$ comes from a trivialisation defined over $\R$.
\end{proof}

The following result is an immediate consequence of Lemma \ref{lem-homeq} and Lemma \ref{lem-wcc}.

\begin{cor}\label{cor-iso} Under the assumptions of Lemma \ref{lem-homeq}, the inclusions $\mathcal M\to \phi^{-1}((0,\,\tilde r_0]_{\tilde\R})$ and $\phi^{-1}((0,\,\rho_0]_{\tilde\R})\to \mathcal M$ are $\tilde\R$-semialgebraic $\tilde\R$-homotopy equivalences.
Therefore, the maps 
$$\tilde H_*(\phi^{-1}((0,\,\rho_0]_{\tilde\R}))\to\tilde H_*(\mathcal M)\to\tilde H_*(\phi^{-1}((0,\,\tilde r_0]_{\tilde\R}))$$ 
and 
$$\tilde H^w_{*}(\phi^{-1}((0,\,\rho_0]_{\tilde\R}))\to\tilde H^w_{*}(\mathcal M)\to\tilde H_{w*}(\phi^{-1}((0,\,\tilde r_0]_{\tilde\R}))$$ 
are isomorphisms.
\end{cor}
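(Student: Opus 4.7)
My plan is to combine Lemma \ref{lem-homeq}, which realises the inclusions as local $\tilde\R$-semialgebraic weak homotopy equivalences, with Lemma \ref{lem-wcc}, which upgrades weakly continuous $\tilde\R$-semialgebraic maps to $\tilde\R$-continuous ones, and then to invoke homotopy invariance for each of the two homology theories in turn.

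The $\tilde H^w_{*}$ statement will follow directly. Lemma \ref{lem-homeq} furnishes an $\tilde\R$-semialgebraic weakly continuous homotopy $\theta_{\lambda}$ realising each of the inclusions $\mathcal M\hookrightarrow \phi^{-1}((0,\tilde r_0]_{\tilde\R})$ and $\phi^{-1}((0,\rho_0]_{\tilde\R})\hookrightarrow\mathcal M$ as a local $\tilde\R$-semialgebraic weak homotopy equivalence. Since $\tilde H^w_{*}$ satisfies the Eilenberg-Steenrod axioms for the category of local $\tilde\R$-Nash manifolds together with $\tilde\R$-semialgebraic weakly continuous maps (as recorded in Section \ref{sect-algtop}), such equivalences induce isomorphisms on $\tilde H^w_{*}$, which handles the second chain.

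For the $\tilde H_{*}$ statement, I would apply Lemma \ref{lem-wcc} to the explicit formula $\theta_\lambda(\gamma)=\nu\circ\psi(\gamma,\lambda(\rho_0-\tilde r_0)+\tilde r_0)$ produced in the proof of Lemma \ref{lem-homeq}. Viewed as a map of the pair $(\gamma,\lambda)$, it is $\tilde\R$-semialgebraic (a composition of $\tilde\R$-semialgebraic maps) and weakly continuous (by Proposition \ref{prop-compo}); Lemma \ref{lem-wcc} therefore promotes it to an $\tilde\R$-continuous map, so each inclusion is in fact an $\tilde\R$-semialgebraic $\tilde\R$-homotopy equivalence. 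Another appeal to the Eilenberg-Steenrod axioms, this time for $\tilde H_{*}$, yields the first chain of isomorphisms.

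No substantive obstacle remains; the only piece of bookkeeping is to verify that weak continuity of $\theta$ holds jointly in both variables $(\gamma,\lambda)$, so that Lemma \ref{lem-wcc} can be applied to the whole homotopy rather than to its individual time slices. This is immediate from the explicit formula above, the weak continuity of $\psi$ and $\nu$, and Proposition \ref{prop-compo}.
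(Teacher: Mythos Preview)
Your argument is correct and follows essentially the same route as the paper, which records the corollary as an immediate consequence of Lemma \ref{lem-homeq} and Lemma \ref{lem-wcc}. Your only addition is to spell out the joint weak continuity of $\theta$ via Proposition \ref{prop-compo} before invoking Lemma \ref{lem-wcc}, which is exactly the bookkeeping the paper leaves implicit.
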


\begin{rmk} The above proof shows also that the inclusions $\mathcal M \longrightarrow \phi^{-1}([0,\tilde r_0]_{\wR})$ and $\phi^{-1}([0,\rho_0/2]_{\wR})\longrightarrow \mathcal M$ are homotopy equivalences in the sense of product topology, and the maps
$$H_*(\phi^{-1}((0,\rho_0]_{\wR})) \longrightarrow H_*(\mathcal M) \longrightarrow H_*(\phi^{-1}((0,\tilde r_0]_{\wR}))$$
are isomorphisms. 
\end{rmk}

\begin{proof}[Proof of theorem \ref{thm-hom}] The set $\mathcal F_f$ is the local $\wR$-Nash manifold associated to the $\wR$-Nash manifold $\mathcal A_f$ (actually an $\wR$-algebraic set) and to the $\wR$-Nash function $\phi:\mathcal A_f \longrightarrow \wR$ defined by $\phi(\gamma)=|\gamma|$ for $\gamma\in \wR^n$, cf. Example \ref{ex-f}. Note moreover that for $\rho\in \wR_+^*$, we have $\tilde F_{\tilde f}(\rho,t)=\phi^{-1}((0,\rho]_{\wR})$.

Using Corollary \ref{cor-wctriv} combined with Corollary \ref{cor-iso}, we obtain therefore that the morphisms
$$\tilde H_*(\tilde F_{\tilde f}(\rho_0,t))\longrightarrow \tilde H_*(\mathcal F_f)\longrightarrow\tilde H_*(\tilde F_{\tilde f}(\tilde r_0,t))$$ 
and 
$$\tilde H^w_{*}(\tilde F_{\tilde f}(\rho_0,t))\longrightarrow\tilde H^w_{*}(\mathcal F_f)\longrightarrow\tilde H^w_{*}(\tilde F_{\tilde f}(\tilde r_0,t))$$ 
are isomorphisms. We concentrate on $\tilde F_{\tilde f}(\tilde r_0,t)$. As noted in Remark \ref{rmk-cor}, for a sufficiently small vale $a_0\in \R_+^*$, we have similarly natural isomorphisms
$$H_*(\tilde F_{\tilde f}(\tilde r_0,t)) \longrightarrow H_*(\tilde F_{\tilde f}(\tilde r_0,\tilde a_0))$$
and
$$H^w_*(\tilde F_{\tilde f}(\tilde r_0,t)) \longrightarrow H^w_*(\tilde F_{\tilde f}(\tilde r_0,\tilde a_0)).$$
We achieve the proof using Lemma \ref{lem-comp} combined with Proposition \ref{prop-comp}, since Lemma \ref{lem-comp} provides a natural isomorphism from $H_*(\tilde F_{\tilde f}(\tilde r_0,\tilde a_0))$ to $H_*( F_{f}(r_0, a_0))$, whereas Proposition \ref{prop-comp} gives a natural isomorphism between $H^w_*(\tilde F_{\tilde f}(\tilde r_0,\tilde a_0))$ and $H_*( F_{f}(r_0, a_0))$.
\end{proof}
\section{A characterisation of Milnor fibres}
The final goal of the paper is to give a characterisation of the Milnor fibre $F_f(r,a)$ of a polynomial function $f$ in terms of its associated set of Puiseux series $\mathcal F_f$. We propose a semialgebraic (together with a piecewise linear) characterisation in Theorem \ref{main}.(1), valid in any dimension. We propose also a Nash characterisation, for which we need to exclude some small dimension for topological reasons.

Recall that if $h$ is a semialgebraic homeomorphism between semialgebraic neighborhoods of 0 in $\R^n$, then $\tilde h$ is an $\wR$-semialgebraic $\wR$-homeomorphism between $\wR$-semialgebraic $\wR$-neighborhoods of 0 in $\wR^n$.

\begin{thm}\label{main}
Let $f$ and $g$ be polynomial functions on $\R^n$ vanishing at 0, and let $h$ be a semialgebraic homeomorphism between semialgebraic neighborhoods of 0 in $\R^n$.

\begin{enumerate}
\item If $\tilde h(\mathcal F_f)=\mathcal F_g$ then $F_f(r,a)$ and $F_g(r,a)$ are semialgebraically homeomorphic for $0<a\ll r \ll 1$.

In particular, $F_f(r,a)$ and $F_g(r,a)$ are piecewise linear homeomorphic, in the sense that $C^\infty$ semialgebraic triangulations of $F_f(r,a)$ and $F_g(r,a)$ are piecewise linear homeomorphic.

\item Assume moreover that $\tilde h_{|\mathcal F_f}$ is a weakly continuous homeomorphism onto $\mathcal F_g$. Then $F_f(r,a)$ and $F_g(r,a)$ are Nash diffeomorphic for $0<a\ll r \ll 1$, under the condition that $n$ is not equal to 5 or 6.
If $n\not=5$, then $\Int F_f(r,a)$ and $\Int F_g(r,a)$ are analytically diffeomorphic for $0<a\ll r \ll 1$.
\end{enumerate}
\end{thm}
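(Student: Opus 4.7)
My strategy is to use the Nash triviality results of Section \ref{sect-homol} together with the Tarski--Seidenberg transfer principle to reduce each assertion to the construction of a suitable map between extended Milnor fibres $\tilde F_{\tilde f}(\rho,t)$ and $\tilde F_{\tilde g}(\rho',t)$ for some $\rho,\rho'\in\mathfrak m_+$. Indeed, Lemma \ref{lem-triv}(1) implies that for $(r_0,a_0)\in\R_+^2$ small and $\rho_0\in\mathfrak m_+$ suitable, the extended Milnor fibres $\tilde F_{\tilde f}(\tilde r_0,\tilde a_0)=\widetilde{F_f(r_0,a_0)}$ and $\tilde F_{\tilde f}(\rho_0,t)$ are $\tilde\R$-Nash $\tilde\R$-diffeomorphic, and similarly for $g$. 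Any $\tilde\R$-semialgebraic or $\tilde\R$-Nash equivalence obtained at the Puiseux level, being a statement about semialgebraic sets defined over $\R$, descends to an equivalence over $\R$ by the transfer principle.

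For part (1), the map $\tilde h$ is a $\tilde\R$-semialgebraic $\tilde\R$-homeomorphism of $\tilde\R^n$ sending $\mathcal F_f$ onto $\mathcal F_g$. Continuity of $\tilde h$ and $\tilde h^{-1}$ at $0$, applied to the closed and bounded sets $\tilde F_{\tilde f}(\rho,t)$, produces for sufficiently small $\rho\in\mathfrak m_+$ elements $\rho_1<\rho_2\in\mathfrak m_+$ such that
$$\tilde F_{\tilde g}(\rho_1,t)\subset \tilde h\bigl(\tilde F_{\tilde f}(\rho,t)\bigr)\subset \tilde F_{\tilde g}(\rho_2,t).$$
Composing $\tilde h$ with the $\tilde\R$-Nash trivialisation of Corollary \ref{cor-wctriv}, which identifies $\tilde F_{\tilde g}(\rho_1,t)$ with $\tilde F_{\tilde g}(\rho_2,t)$, converts this pinched inclusion into a genuine $\tilde\R$-semialgebraic $\tilde\R$-homeomorphism $\tilde F_{\tilde f}(\rho,t)\to\tilde F_{\tilde g}(\rho_1,t)$. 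Transferring to real parameters $(r_0,a_0)$ via Lemma \ref{lem-triv}(1) and then applying the transfer principle to the resulting $\tilde\R$-semialgebraic $\tilde\R$-homeomorphism between $\widetilde{F_f(r_0,a_0)}$ and $\widetilde{F_g(r_0,a_0)}$ produces the semialgebraic homeomorphism over $\R$. The piecewise linear consequence then follows from the classical fact that any two $C^\infty$ semialgebraic triangulations of semialgebraically homeomorphic compact sets are PL equivalent.

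For part (2), the additional weak continuity of $\tilde h|_{\mathcal F_f}$, combined with Proposition \ref{prop-C1}, ensures that the induced map is of class $\tilde\R$-$C^1$. Using the weakly continuous $\tilde\R$-Nash trivialisation provided by Lemma \ref{lem-triv}(2) and Corollary \ref{cor-wctriv} in place of the merely $\tilde\R$-continuous one, the argument of the previous paragraph upgrades to produce a weakly continuous $\tilde\R$-$C^1$ diffeomorphism between the extended Milnor fibres, which via Nash approximation and transfer yields a $C^1$-diffeomorphism $F_f(r,a)\cong F_g(r,a)$ over $\R$. The main obstacle, and the source of the dimensional restrictions, is the final step of promoting this $C^1$-equivalence to a Nash (hence $C^\infty$) diffeomorphism: this is handled using Milnor's theory of topological microbundles to smooth the tangent microbundle and then approximate in the Nash category. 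Since the obstructions to such smoothings live in homotopy groups of $\mathrm{Top}/\mathrm{O}$ concentrated around dimension $4$, the excluded dimensions $n=5,6$ correspond respectively to a $4$-dimensional interior and to a $4$-dimensional boundary of the Milnor fibre, while for analytic diffeomorphism of open interiors only $n=5$ must be excluded.
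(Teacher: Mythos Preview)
Your argument for part (1) has a genuine gap at the step where you ``convert this pinched inclusion into a genuine $\tilde\R$-semialgebraic $\tilde\R$-homeomorphism''. The sandwich $\tilde F_{\tilde g}(\rho_1,t)\subset \tilde h(\tilde F_{\tilde f}(\rho,t))\subset \tilde F_{\tilde g}(\rho_2,t)$, together with the Nash trivialisation identifying the outer two, only shows that the three sets are $\tilde\R$-homotopy equivalent: the boundary $\tilde h(\partial\tilde F_{\tilde f}(\rho,t))$ sits in the collar $\tilde F_{\tilde g}(\rho_2,t)\setminus\Int\tilde F_{\tilde g}(\rho_1,t)\cong\partial\tilde F_{\tilde g}(\rho_1,t)\times[\rho_1,\rho_2]_{\tilde\R}$ but has no reason to be a graph over the first factor, so no homeomorphism falls out. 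Producing one is precisely the substantive content the paper isolates in Proposition~\ref{prop-key}, whose proof is not a formality: it relies on uniqueness of PL regular neighbourhoods (Lemma~\ref{lemPL}) and a delicate partial triangulation of the map $(f,d_i)$. In fact the paper never works over $\tilde\R$ for part~(1): it observes $\mathcal F_f=\mathcal F_{g\circ h}$, hence $F_f(r,a)=F_{g\circ h}(r,a)$ by Lemma~\ref{lem-deb}, and then applies Proposition~\ref{prop-key} on $\R^n$ with the two distance functions $|x|$ and $|h^{-1}(x)|$.

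For part (2) the gap is inherited, and the shape of the argument is also different from the paper's. Proposition~\ref{prop-C1} only tells you that $\tilde h|_{\mathcal F_f}$ is of class $\tilde\R$-$C^1$; it does not give a $\tilde\R$-$C^1$ \emph{diffeomorphism} between extended Milnor fibres, since $\tilde h(\tilde F_{\tilde f}(\rho,t))$ is still not a set of the form $\tilde F_{\tilde g}(\rho',t)$. The paper instead leverages the proof of part~(1) (Remark~\ref{rmk-add}(1)) to build a semialgebraic \emph{isotopy} $g_s:F_f(r_0,r_0^l)\to F_g(r_0^{l_1},r_0^l)$ with $g_0$ a homeomorphism onto and $g_{1/2}=h$, extends it for $s\in[1/2,1)$ via the Nash trivialisations $H_f,H_g$ so that $g_s$ is conjugate to $h|_{F_f((2-2s)r_0,\cdot)}$, and uses Proposition~\ref{prop-C1} plus transfer to show $g_s$ is a $C^1$ embedding for $s$ close to $1$. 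The microbundle machinery (Corollary~\ref{cor-con}, resting on Proposition~\ref{prop-con}) is then applied to this isotopy; its role is not to upgrade a $C^1$ diffeomorphism to a Nash one (that step is routine), but to deduce that two smooth manifolds are $C^\infty$ diffeomorphic from the existence of a homeomorphism isotopic to a $C^1$ embedding which need not be surjective. This is where the dimension restrictions enter, via the smoothing theory of \cite{KS}.
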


The proof of Theorem \ref{main}.(1) is based on piecewise linear topology \cite{PL}, and is exposed in section \ref{sect-PL}. The proof of Theorem \ref{main}(2) is more involved, and we use the theory of microbundles \cite{KS,Milnor} to discuss the Nash structures on the Milnor fibres. Note that Theorem \ref{main}.(2) is false without the additional condition that $\tilde h_{|\mathcal F_f}$ is a weakly continuous homeomorphism, as illustrated by the example of Kervaire's exotic sphere in section \ref{kerv}. We postpone the proof of this fact after the proof of Theorem \ref{main}.(1) since we make use of it to prove Proposition \ref{prop-cex}.

\subsection{Proof of the piecewise linear characterisation}\label{sect-PL}

The goal of the section is to understand to which extend the set of Puiseux series $\mathcal F_f$ determines the Milnor fibre $F_f(r,a)$ of a polynomial function $f$. Here is a first result is this direction.

\begin{lemma}\label{lem-deb} Let $f$ and $g$ be continuous semialgebraic function germs at $0$ in $\R^n$. If $\mathcal F_f=\mathcal F_g$ then $F_f(r,a)=F_g(r,a)$ for $0<a\ll r \ll 1$.
\end{lemma}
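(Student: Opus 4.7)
The strategy is to argue by contradiction, using the semialgebraic curve selection lemma to convert a potential discrepancy between the level sets of $f$ and $g$ near $0$ into an element of the symmetric difference $(\mathcal F_f \setminus \mathcal F_g) \cup (\mathcal F_g \setminus \mathcal F_f)$.

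First I would negate the conclusion: suppose that for arbitrarily small $0<a\ll r\ll 1$ we have $F_f(r,a)\neq F_g(r,a)$. By the symmetry in $f$ and $g$, I may assume without loss of generality that there exist sequences $r_n \to 0$, $a_n \to 0$ with $0<a_n \ll r_n$, and points $x_n \in \R^n$ with $|x_n|\le r_n$, $f(x_n)=a_n$ and $g(x_n)\neq a_n$. Consider then the semialgebraic set
$$A=\{(x,a)\in\R^n\times\R : a>0,\ f(x)=a,\ g(x)\neq a\}.$$
The sequence $(x_n,a_n)$ shows that $(0,0)\in\overline{A}$.

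Next I would apply the semialgebraic curve selection lemma (Theorem 2.5.5 in \cite{BCR}) to obtain a continuous semialgebraic curve $s\mapsto (x(s),a(s))$ from $[0,\epsilon)$ into $\R^n\times\R$ with $(x(0),a(0))=(0,0)$ and $(x(s),a(s))\in A$ for all $s\in(0,\epsilon)$. Since the one-variable semialgebraic function $a(s)$ satisfies $a(0)=0$ and $a(s)>0$ for $s>0$, it is strictly monotone (hence strictly increasing) near $0$, so it admits a continuous semialgebraic inverse $s=s(t)$ defined on some interval $[0,\delta)$. Setting $\gamma(t)=x(s(t))$ produces a continuous semialgebraic germ $\gamma:[0,\delta)\to\R^n$ with $\gamma(0)=0$, $f\circ\gamma(t)=t$, and $g\circ\gamma(t)\neq t$ for all small $t>0$.

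Finally, regarding $\gamma$ as an element of $\wR^n$, the relations $\gamma(0)=0$ and $\tilde f(\gamma)=t$ say exactly that $\gamma\in\mathcal F_f$, while $\tilde g(\gamma)\neq t$ in $\wR$ says that $\gamma\notin\mathcal F_g$. This contradicts the assumption $\mathcal F_f=\mathcal F_g$ and concludes the proof. The argument is essentially routine once one has identified the right semialgebraic set to apply curve selection to; the only mild subtlety is the reparametrisation step ensuring that the second coordinate of the curve can be taken to be $t$ itself, which is where the one-variable monotonicity property of semialgebraic functions is used.
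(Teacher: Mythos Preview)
Your argument is correct and follows essentially the same route as the paper's own proof: both reduce the claim, via the semialgebraic curve selection lemma, to producing a continuous semialgebraic arc through the origin on which $f$ and $g$ disagree but one of them is positive, and then reparametrise so that this positive value equals the parameter $t$, yielding an element of $\mathcal F_f\setminus\mathcal F_g$ (or vice versa). The only cosmetic difference is that the paper first reformulates the goal as showing $f=g$ on the germ $\{f>0\}\cup\{g>0\}$ and applies curve selection directly in $\R^n$, whereas you introduce the auxiliary parameter $a$ and work in $\R^n\times\R$; the content is the same.
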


\begin{proof} It suffices to prove that $g=f$ on $\{g>0\}\cup\{f>0\}$.
Assuming it is not the case, there exists a continuous semialgebraic curve $\gamma: (\R,0)\longrightarrow (\R^n,0)$ along which $f$ is not equal to $g$ and either $f\circ \gamma (s)$ or $g\circ \gamma (s)$ is strictly positive for $s>0$ small enough. Assume for example that $g\circ \gamma (s)$ is strictly positive. We can suppose $g\circ \gamma (s)=s$ by changing the parameter $s$, so that $\gamma \in \mathcal F_g$. By assumption we obtain therefore $\gamma \in \mathcal F_f$, which contradicts that $f$ is not equal to $g$ along $\gamma$.
\end{proof}

Next result is the key argument in the proof of Theorem \ref{main}.(1). 

\begin{prop}\label{prop-key} Let $f$ be a continuous semialgebraic function on a compact semialgebraic subset $X$ of $\R^n$, with $0\in X$ and $f(0)=0$. Let $d_i$ be a non-negative continuous semialgebraic function on $X$ whose zero set is reduced to $\{0\}$, for $i\in \{1,2\}$.
Set 
$$
N_i(a,r)=\{x\in X:d_i(x)\le r,\,f(x)=a\}$$
for real numbers $a,r\in \R$, with $i\in \{1,2\}$.
Then $N_1(a,r)$ and $N_2(a,r)$ are semialgebraically homeomorphic for $0<a\ll r\ll 1$.
\end{prop}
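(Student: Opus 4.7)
The plan is to combine three classical ingredients: the {\L}ojasiewicz inequality, Hardt's semialgebraic triviality theorem, and a collar-based sandwich argument. Since $d_1$ and $d_2$ are continuous semialgebraic functions on the compact set $X$ vanishing exactly at $0$, the {\L}ojasiewicz inequality will supply positive constants and exponents with $d_1 \le C_1 d_2^{\beta}$ and $d_2 \le C_2 d_1^{\alpha}$ on a neighborhood of $0$ in $X$. From these I would construct continuous semialgebraic positive functions $r \mapsto r^-(r)$ and $r \mapsto r^+(r)$, with $r^-(r)<r<r^+(r)$ and $r^{\pm}(r)\to 0$ as $r\to 0$, yielding the sublevel-set sandwich
$$\{d_2 \le r^-(r)\} \subset \{d_1 \le r\} \subset \{d_2 \le r^+(r)\}$$
on a neighborhood of $0$ in $X$.

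Next, I would apply Hardt's semialgebraic triviality theorem to each of the semialgebraic projections
$$\Phi_i:\{(x,a,r)\in X\times\R_{>0}^2: f(x)=a,\ d_i(x)\le r\}\to \R^2,\qquad i=1,2,$$
and take a common refinement of the two resulting finite semialgebraic partitions of $\R^2$. A curve-selection argument applied to a stratum of this refinement whose closure contains the origin then produces a common ``Milnor wedge'' region $\mathcal R=\{(a,r):0<r\le r_0,\ 0<a\le v(r)\}$ over which both $\Phi_1$ and $\Phi_2$ are semialgebraically trivial, with $v:(0,r_0]\to\R_{>0}$ continuous semialgebraic and $v(r)\to 0$ as $r\to 0$. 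After possibly shrinking $v$ and $r_0$ I can arrange that $(a,r)\in\mathcal R$ entails $(a,r^{\pm}(r))\in\mathcal R$ as well.

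Intersecting the sandwich with $f^{-1}(a)$ for $(a,r)\in\mathcal R$ then yields
$$N_2(a,r^-(r)) \subset N_1(a,r) \subset N_2(a,r^+(r)),$$
while the triviality of $\Phi_2$ over $\mathcal R$ identifies the annular region $N_2(a,r^+(r))\setminus \Int N_2(a,r^-(r))$ with a semialgebraic product collar $\partial N_2(a,r^-(r))\times[r^-(r),r^+(r)]$, together with semialgebraic homeomorphisms $N_2(a,r^-(r))\cong N_2(a,r^+(r))\cong N_2(a,r)$. Viewing $N_1(a,r)$ as a closed semialgebraic subset of the larger fibre which contains the inner core $N_2(a,r^-(r))$ and meets the collar in a region bounded above by the section $\{d_1=r\}\cap f^{-1}(a)$, a classical shift-of-collar reparameterization in the $[r^-(r),r^+(r)]$-direction then produces a semialgebraic homeomorphism $N_1(a,r)\cong N_2(a,r^+(r))\cong N_2(a,r)$, as required.

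The main obstacle is this last step: the sandwich alone places $N_1(a,r)$ only as a topological subset between two homeomorphic sets, and extra structure is needed to exhibit it as a ``collared subset'' of $N_2(a,r^+(r))$ whose complement in the collar is itself a product cylinder on $\{d_1=r\}\cap f^{-1}(a)$. The delicate point will be arranging compatibility between the $\Phi_1$- and $\Phi_2$-trivializations; I would handle this by Whitney-stratifying the combined map $(d_1,d_2,f):X\to\R^3$ compatibly with the sublevel sets involved, and then transporting the two trivializations onto a common stratified model via the semialgebraic version of Thom's first isotopy lemma.
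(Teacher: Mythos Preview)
Your architecture is natural, and you correctly locate the obstacle, but the proposed fix does not close the gap. Hardt triviality of $\Phi_2$ identifies the annulus $N_2(a,r^+)\setminus\Int N_2(a,r^-)$ with a cylinder $\partial N_2(a,r^-)\times[r^-,r^+]$ in which the interval coordinate is $d_2$. Inside this cylinder the set $\{d_1\le r\}$ contains the bottom and misses the top, but nothing forces it to be the subgraph of a section: $d_1$ need not be monotone along the collar lines manufactured by the $\Phi_2$-trivialization, so the ``shift-of-collar reparameterization'' has nothing to grab. Stratifying $(d_1,d_2,f)$ and invoking Thom's isotopy lemma does not repair this. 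That lemma, applied to the triple map, yields local triviality over open strata of $\R^3$; it controls the \emph{joint} level sets $\{d_1=s_1,\,d_2=s_2,\,f=a\}$, but it does not produce a single trivialization of the $d_2$-annulus along whose collar lines $d_1$ is monotone, nor does it let you transport the $\Phi_1$- and $\Phi_2$-trivializations onto a common frame---their collar directions are unrelated semialgebraic isotopies. In the smooth setting one would look for a vector field on $f^{-1}(a)$ gradient-like for both $d_1$ and $d_2$ and integrate it, but here $f,d_1,d_2$ are merely continuous semialgebraic and there is no flow available.

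The paper's proof sidesteps the compatibility problem entirely by passing to the piecewise linear category. It first triangulates $f$ so that $f$ becomes simplicial, and then, via a weak local triangulation of the pairs $(f,d_i)$ together with cell-complex and Alexander-trick adjustments, reduces to the case where $d_1$ and $d_2$ are themselves PL. In that situation (Lemma~\ref{lemPL}) both $\{d_i\le\epsilon\}$ are regular neighbourhoods of $0$ in the underlying complex, and the uniqueness of regular neighbourhoods delivers the PL (hence semialgebraic) homeomorphism directly, with no need to reconcile two different collar structures.
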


We postpone the proof of Proposition \ref{prop-key} in order to show how we use it in the proof of Theorem \ref{main}.(1).

\begin{proof}[Proof of Theorem \ref{main}.(1)] The continuous semialgebraic functions $f$ and $g\circ h$ share the same set of Puiseux series $\mathcal F_f=\mathcal F_{g\circ h}$ since $\tilde h ^{-1}(\mathcal F_g)$, which coincides with $\mathcal F_f$ by assumption, is equal to $\mathcal F_{g\circ h}$. Therefore the sets $F_f(r,a)$ and $F_{g\circ h}(r,a)$ are equal for $0<a\ll r\ll 1$ by Lemma \ref{lem-deb}.

Define a distance function $d$ a compact neighbourdhood $X$ of $\R^n$ by $d(x)=|h^{-1}(x)|$. Then $F_g(r,a)$ is semialgebraically homeomorphic to 
$$N(a,r)=\{x\in X:d(x)\le r,\,g(x)=a\}$$
by Proposition \ref{prop-key}, for $0<a\ll r\ll 1$. This last set is carried to
$$\{y\in h^{-1}(X): |y|\le r,\,g\circ h (y)=a\}$$
by the semialgebraic homeomorphism $h^{-1}$, which is equal to $F_{g\circ h}(r,a)$ for $0<a\ll r\ll 1$. As a consequence $F_f(r,a)=F_{g\circ h}(r,a)$ is semialgebraically homeomorphic to $F_g(r,a)$.
\end{proof}

The proof of Proposition \ref{prop-key} is classical piecewise linear topology in the case all the data are piecewise linear. The most delicate part is to come back to this situation. We begin with a lemma in the piecewise linear case.

\begin{lemma}\label{lemPL} Let $f$ be a piecewise linear function on a compact polyhedron $X$ of $\R^n$, with $0\in X$ and $f(0)=0$. Let $d_i$ be a non-negative piecewise linear function on $X$ whose zero set is reduced to $\{0\}$, for $i\in \{1,2\}$. 
Then $N_1(a,r)$ and $N_2(a,r)$ are piecewise linear homeomorphic for $0<a\ll r\ll 1$.
\end{lemma}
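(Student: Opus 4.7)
The strategy is to produce a piecewise linear isotopy carrying $N_1(a,r)$ to $N_2(a,r)$ via a linear interpolation of the two distance functions. First I would fix a common simplicial triangulation of $X$ on which $f$, $d_1$ and $d_2$ are simultaneously simplicial, and set $d_s:=(1-s)d_1+s\,d_2$ for $s\in[0,1]$. The function $(x,s)\mapsto d_s(x)$ is then piecewise linear on $X\times[0,1]$, non-negative, and vanishes exactly on $\{0\}\times[0,1]$ (since $d_s$ is a convex combination of non-negative PL functions sharing the zero set $\{0\}$). Introducing the parametrised family
$$\widetilde N(a,r):=\bigl\{(x,s)\in X\times[0,1]: d_s(x)\le r,\ f(x)=a\bigr\},$$
the fibres of the projection $\pi:\widetilde N(a,r)\to[0,1]$ over $s=0$ and $s=1$ are precisely $N_1(a,r)$ and $N_2(a,r)$.

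Next I would invoke the piecewise linear analogue of Thom's first isotopy lemma (from \cite{PL}): a proper PL map to a polyhedron admits a finite PL stratification of the target over each stratum of which the map is PL trivial. Applied to $\pi$, this produces a finite PL stratification of $[0,1]$; the crux is to show that for a suitable choice $0<a\ll r\ll 1$ uniform in $s\in[0,1]$, no point of the interior $(0,1)$ is a stratification critical value of $\pi$, so $\pi$ becomes PL trivial over the whole interval. The resulting PL homeomorphism between the fibres at $s=0$ and $s=1$ then delivers $N_1(a,r)\cong N_2(a,r)$.

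To guarantee uniform smallness of $(a,r)$ across all $s$, I would combine compactness of $[0,1]$ with the finiteness of the common simplicial triangulation: locally near each $s_0\in[0,1]$ the PL version of the Milnor-fibration set-up yields thresholds $(a_{s_0},r_{s_0})$ for which $\pi$ is PL trivial over a neighbourhood of $s_0$, and a Lebesgue-number argument applied to this open cover of $[0,1]$ produces common thresholds $(a_0,r_0)$ valid for all $s$. The crude inclusions $\{d_1\le r\}\cap\{d_2\le r\}\subset \{d_s\le r\}\subset \{d_1\le r\}\cup\{d_2\le r\}$, immediate from convexity, confirm that $\{d_s\le r\}$ shrinks uniformly in $s$ to $\{0\}$ as $r\to 0$.

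The main obstacle is precisely this uniform control and the gluing of local PL trivializations into a global one, which is the parametric PL Thom--Mather content cited from \cite{PL}; the compactness of $X$ together with the explicit PL (in fact, affine) dependence of $d_s$ on $s$ is what makes the argument tractable, reducing it in essence to a finite general-position statement on the common triangulation.
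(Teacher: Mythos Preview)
Your approach is genuinely different from the paper's. The paper does not interpolate between $d_1$ and $d_2$; instead it invokes the \emph{uniqueness of regular neighbourhoods} (Theorem~3.8 in \cite{PL}) twice. First, after passing to the double barycentric subdivision $K''$ of a common triangulation on which $f,d_1,d_2$ are simplicial, both $\{d_i\le\epsilon\}$ are regular neighbourhoods of $0$ in $X$, so there are simplicial isomorphisms $\alpha_i$ of $X$ carrying $|\st(0,K'')|$ onto $\{d_i\le\epsilon\}$ while preserving each simplex of $K$. Since $f$ is linear on simplices, $\alpha_1^{-1}(f^{-1}(0))=\alpha_2^{-1}(f^{-1}(0))$, and the problem reduces to showing that two simplicial functions on $|\st(0,K'')|$ with the same zero set have PL homeomorphic nearby positive level sets --- which is again regular-neighbourhood uniqueness. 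No parametric or isotopy-lemma machinery is used.

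Your interpolation idea is natural, but as written it has a real gap. The tool you appeal to --- a ``PL analogue of Thom's first isotopy lemma'' giving local PL triviality of $\pi:\widetilde N(a,r)\to[0,1]$ --- is not available in \cite{PL} in the generality you need. Crucially, $X$ is an arbitrary compact polyhedron, not a PL manifold, so $f^{-1}(a)$ and the $N_s(a,r)$ are general polyhedra; there is no PL Ehresmann/submersion theorem for maps of polyhedra, and the standard substitute in PL topology is precisely regular-neighbourhood theory. Moreover, your step ``locally near each $s_0$ the PL Milnor-fibration set-up yields thresholds for which $\pi$ is PL trivial'' is not a reduction but a local restatement of the lemma itself (with $d_{s_0}$ and a nearby $d_s$ in place of $d_1,d_2$), so the compactness/Lebesgue argument never gets started. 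To make the parametric route rigorous you would have to show that $\{d_s\le r\}$ is a regular neighbourhood of $0$ in $X$ for every $s$ and then quote \emph{isotopy} uniqueness of regular neighbourhoods --- at which point you have essentially reproduced the paper's argument, only with an extra parameter to carry around.
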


\begin{proof} Let $K$ be a simplicial complex such that $X$ is the underlying polyhedron of $K$ and $f$ together with $d_1,d_2$ are simplicial on $K$, i.e. linear on each simplex in $K$.

Let $K'$ denote the barycentric subdivision of $K$, and choose $\epsilon$ so small that the set 
$$\{x\in X:0<d_i(x)\le\epsilon\},$$ 
for $i=1$ or $2$, does not contain any vertex in $K''$, the double barycentric subdivision of $K$.
Then by the uniqueness of regular neighborhoods (cf. Theorem 3.8 in \cite{PL}), there exist simplicial isomorphisms $\alpha_i$ from $K''$ to some simplicial subdivisions $K_i$ of $K$ such that
$$\alpha_i(|\st (0,K'')|)=\{x\in X:\phi_i(x)\le\epsilon\},$$
for $i=1$ or $2$, where the notation $\st (0,K'')$ denotes the star of $K''$ at $0$ (i.e. the simplicial complex obtained by taking all simplices adjacent to $0$) and $|\st(0,K'')|$ denotes its underlying polyhedron.
Explicitly, we define $\alpha_i$ to be the identity map on $\{0\}\cup(X-|\st(0,K'')|)$ and $\alpha_i(v)$ to be equal to $\phi_i(\epsilon)\cap l_v$ for each vertex $v$ in $\lk(0,K'')$, and we extend linearly $\alpha_i$ to each simplex in $K''$, where the notation $l_v$ denotes the segment with ends 0 and $v$. In particular, note that $\alpha^{-1}_1(f^{-1}(0))$ is equal to $\alpha^{-1}_2(f^{-1}(0))$ by linearity of $f$ on the simplices.

Hence, by replacing $f$ with $f\circ\alpha_i$ and $X$ with $|\st(0,K'')|$, we have reduce the problem to prove that, for $f_1$ and $f_2$ simplicial functions on $K$ such that $f_1^{-1}(0)=f_2^{-1}(0)$, the sets $f_1^{-1}(a)$ and $f_2^{-1}(a)$ are piecewise linear homeomorphic for small values of $a>0$. This statement also follows from the uniqueness of regular neighborhoods.
\end{proof}

In order to prove Proposition \ref{prop-key}, it is natural to hope to triangulate the maps $(f,d_i): X \to \R^2$, for $i=1,2$. However a global triangulation of a continuous semialgebraic map from a compact set to $\R^2$ is impossible in general. We overcome this difficulty using a weak triangulation (given in Lemma 5 in \cite{SII}) which will be sufficient to treat our local situation.

Before entering into the details of the proof, we recall the notion of cell complex (from \cite{PL}) that will be useful for the proof. A cell means a compact convex polyhedron in $\R^n$.
A cell is piecewise linear homeomorphic to a simplex.
A cell complex means a family of cells such that the boundaries of each cell is the union of some cells and the union of the interiors of the cells is a locally finite disjoint union.
Note also that, given a cell complex, the interiors of the elements form a cell complex in the sense of topology.
A cellular map $h:L_1\to L_2$ between cell complexes means a piecewise linear map $h:|L_1|\to|L_2|$ which linearly carries each element of $L_1$ to some element of $L_2$, where $|L_1|$ denotes the underlying polyhedron of $L_1$.
The Alexander trick (\cite{PL}, p. 37) is the statement which states that a piecewise linear homeomorphism between the boundaries of two cells is extended to a piecewise linear homeomorphism between the cells.
We can see how to apply the Alexander trick in \cite{PL} and \cite{SI}.

\begin{proof}[Proof of Proposition \ref{prop-key}] By the triangulation theorem of semialgebraic functions (Theorem 3.2 in \cite{SI}), we can suppose that $X$ is the underlying polyhedron of a simplicial complex $K$ and that $f$ is simplicial on $K$, with $f^{-1}(0)$ a union of simplices. Using Lemma \ref{lemPL}, we are reduced to prove that we can suppose that $d_1$ and $d_2$ are piecewise linear.

By Lemma 5 in \cite{SII}, there exist a semialgebraic homeomorphism $h_i$ of $X$
such that $h_i(\sigma)=\sigma$ for each $\sigma\in K$, and a neighbourhood $U$ of 0 in $f^{-1}(0)$ together with a compact polyhedral neighbourhood $V$ of $U-\{0\}$ in $X$ such that $(f,d_i)\circ h_i$ is piecewise linear on $V$ for $i=1$ and $2$.

Note that:
\begin{enumerate}
\item $V$ is not necessarily a neighbourhood of 0,
\item $d_i\circ h_i$ is piecewise linear only on $V$, but not necessarily on $f^{-1}([0,\,a])$,
\item $f\circ h_i$ is not necessarily piecewise linear on $f^{-1}([0,\,a])$ globally.
\end{enumerate}

The point $(2)$ is not annoying for the reduction to the case where $d_1$ and $d_2$ are piecewise linear because we are interested only in the set 
$$\{x\in X:d_i(x)\le\epsilon,\,f(x)=a\},$$
that is, we need the condition that $d_i$ is piecewise linear only on a neighbourhood of $\{x\in X:d_i(x)=\epsilon,\,f(x)=a\}$ in $X$.
However, the main difficulty consists in point $(3)$. In the sequel, we modify the semialgebraic homeomorphism  $h_i$ so that $f$ becomes equal to $f\circ h_i$ on $f^{-1}([0,\,a])$.

We compare first the functions $f$ and $f\circ h_i$ on $V$.
By subdividing $K$ we can assume that $V$ is the underlying polyhedron of some subcomplex of $K$, and that the $f\circ h_i$ are simplicial on $K|_V$ for $i=1,2$, although we may lose the property that $h_i(\sigma)=\sigma$ for $\sigma\in K$.
Set 
$$K|_{V\cap f^{-1}(a)}=\{\sigma\cap V\cap f^{-1}(a):\sigma\in K\}$$ 
for $a\in\R$. Then $K|_{V\cap f^{-1}(a)}$ is a cell complex (not necessarily simplicial).
Then, for $0<a\ll1\in\R$, there is a unique cellular isomorphism 
$$k_{i, a}:K|_{V\cap f^{-1}(a)}\to K|_{V\cap(f\circ\alpha_i)^{-1}(a)}$$ 
such that for each $\sigma\in K|_{V\cap f^{-1}(a)}$, the cells $\sigma$ and $k_{i, a}(\sigma)$ are included in some simplex in $K$ of dimension equal to $\dim\sigma+1$.
Hence, for some $a>0$, there is exist a piecewise linear homeomorphism 
$$k_i:V\cap f^{-1}([0,\,a])\to V\cap(f\circ h_i)^{-1}([0,\,a])$$ 
such that $f=f\circ h_i\circ k_i$ on $V\cap f^{-1}([0,\,a])$ and $k_i(\sigma)=\sigma$ for $\sigma\in K|_V$. Moreover, we can extend $h_i$ to a piecewise linear homeomorphism $h_i$ of $X$ so that $h_i(\sigma)=\sigma$ for $\sigma\in K$ by the Alexander trick. Note that $d_i\circ h_i\circ k_i$ continues to be piecewise linear on $V\cap f^{-1}([0,\,a])$.

So replacing $h_i$ with $h_i\circ k_i$, we have obtained that $f=f\circ h_i$ on $V\cap f^{-1}([0,\,a])$. 
Finally, we are going to modify $h_i$ outside of $V\cap f^{-1}([0,\,a])$ so that $f=f\circ h_i$ on $f^{-1}([0,\,a])$.
Set 
$$L=K|_{f^{-1}([0,\,a])}=\{\sigma\cap f^{-1}([0,\,a]):\sigma\in K\}$$ 
and consider the restriction $h'_i={h_i}_{|{V\cap|L|}}$.
Then $L$ is a cellular decomposition of $f^{-1}([0,\,a])$, the set $V\cap|L|$ is the underlying polyhedron of some subcomplex $L'$ of $L$ such that $h_i'(\sigma)=\sigma$ for $\sigma\in L'$, and $f=f\circ h_i'$ on $|L'|$.
Hence by the Alexander trick again, we can extend $h_i'$ to a semialgebraic homeomorphism $h_i''$ of $|L|$ so that $f=f\circ h_i''$ on $|L|$.
\end{proof}

\begin{rmk}\label{rmk-add}\begin{flushleft}
\end{flushleft}
\begin{enumerate}
\item As a consequence of the proof above, we can refine Theorem \ref{main}.(1) as follows. Choose $0<a_1\ll r_1\ll 1$ and $0<a_2\ll r_2\ll 1$ so that 
$$h(F_f(r_1,a_1)) \subset F_g(r_2,a_2).$$
Then
$$h_{|F_f(r_1,a_1)} :F_f(r_1,a_1) \to F_g(r_2,a_2)$$
is semialgebraically isotopic to a semialgebraic homeomorphism onto $F_g(r_2,a_2)$. We will use this refined version in the proof of Theorem \ref{main}.(2).
\item In the same spirit of the proof of Theorem \ref{main}.(1), we can prove that if $f$ and $g$ are polynomial functions on $\R^n$ vanishing at 0, if $h$ is a semialgebraic homeomorphism between semialgebraic neighbourhoods of 0 in $f^{-1}(0)\subset \R^n$, and if the restriction of $\tilde h$ to
$$\{\gamma \in {\tilde f^{-1}(0): \gamma (0)=0}\}$$
is a bijection onto
$$\{\gamma \in {\tilde g^{-1}(0): \gamma (0)=0}\}$$
then the sets $\{x\in f^{-1}(0): |x|\leq r\}$ and $\{x\in g^{-1}(0): |x|\leq r\}$ are semialgebraically homeomorphic, for $r>0$ small enough.
\end{enumerate}
\end{rmk}

\subsection{Kervaire's exotic sphere}\label{kerv}

Regard $\C$ as $\R^2$, and define polynomials $f$ and $g$ on $\C^6\times\R$ by 
$$f(z,x)=|z_1|^2+x^2$$
and
$$g(z,x)=|z_1^2+\cdots+z_5^2+z_6^3|^2+x^2,$$
for $(z,x)=(z_1,...,z_6,x)\in\C^6\times\R.$ Let $S$ denote the sphere in $\C^6\times\R$ with center 0 and with radius 1. Then set $g^{-1}(0)\cap S$ is the Kervaire's exotic sphere (cf. \cite{FMilnor} p. 72), i.e. $g^{-1}(0)\cap S$ is a topological sphere of dimension 9 (Theorem 8.5 and 9.1 in \cite{FMilnor}) which is not diffeomorphic to the standart 9-sphere. Note that $f^{-1}(0)\cap S$ is a standart sphere, also of dimension 9.

\begin{prop}\label{prop-cex}\begin{flushleft}
\end{flushleft}
\begin{enumerate}
\item There exists a semialgebraic homeomorphism $h$ of $\C^6\times\R$ such that $h(0)=0$ and $\tilde h(\mathcal F_f)=\mathcal F_g$.
\item $F_f(r,a)$ and $F_g(r,a)$ are not Nash diffeomorphic for $0<a\ll r \ll 1$.
\end{enumerate}
\end{prop}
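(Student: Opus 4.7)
For part (1), the strategy is to construct a semialgebraic homeomorphism $h$ of $\C^6\times\R$ with $h(0)=0$ and $g\circ h=f$ on some semialgebraic neighbourhood of $0$. From such an $h$ one immediately deduces that for every $\gamma\in\mathcal F_f$ the arc $\tilde h(\gamma)$ satisfies $g\circ\tilde h(\gamma)(t)=(g\circ h)(\gamma(t))=f(\gamma(t))=t$ and $\tilde h(\gamma)(0)=h(0)=0$, hence belongs to $\mathcal F_g$; invertibility of $h$ then gives $\tilde h(\mathcal F_f)=\mathcal F_g$. To build $h$, I would use that both $f=|z_1|^2+x^2$ and $g=|w|^2+x^2$ factor through the squared norm $q(u,v)=|u|^2+v^2$ on $\C\times\R$, with $\Phi_f(z,x)=(z_1,x)$ and $\Phi_g(z,x)=(w,x)$. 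The zero sets $\{z_1=0,x=0\}$ and $\{w=0,x=0\}$ are cones on the standard $S^9$ and on Kervaire's sphere $\Sigma_K^9$ respectively, and by the Kervaire--Milnor classification $\Sigma_K^9$ is PL-homeomorphic to $S^9$; combined with the uniqueness of PL regular neighbourhoods (Theorem 3.8 in \cite{PL}) and the triangulation theorem for semialgebraic maps (Theorem 3.2 in \cite{SI}) in the spirit of the proof of Proposition \ref{prop-key}, this PL-equivalence of pairs can be refined into a semialgebraic homeomorphism matching the full functions $f$ and $g$. The main obstacle is extending from the zero sets to the positive levels: while the Milnor fibrations of $z_1$ and $w$ are not topologically equivalent in isolation, the extra $x$-variable supplies a common $2$-sphere base $S^2_{\sqrt a}$ whose simple-connectedness, together with the finite order $6$ of the Brieskorn monodromy, absorbs the discrepancy.

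For part (2), I proceed by contradiction using the Kervaire invariant. Suppose $\Psi:F_g(r,a)\to F_f(r,a)$ is a Nash (hence smooth) diffeomorphism. Since $f$ factors through the trivial projection $(z,x)\mapsto(z_1,x)$, the Milnor fibre $F_f(r,a)$ is smoothly $S^2\times D^{10}$, and $\partial F_f\cong S^2\times S^9$ carries the standard smooth structure. Inside $F_g$, for a regular point $(u_0,v_0)\in S^2_{\sqrt a}$ of the map $(z,x)\mapsto(w(z),x)$ with $u_0\ne 0$, the fibre $M=\{(z,v_0):w(z)=u_0,\,|z|^2\le r^2-v_0^2\}$ is the Brieskorn Milnor fibre of $w$: a smoothly embedded parallelizable $10$-manifold in $F_g$ with boundary $\partial M$ smoothly diffeomorphic to $\Sigma_K^9$ and with Kervaire invariant equal to $1$ (the defining property of $\Sigma_K^9$ in the Kervaire--Milnor theory). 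Then $\Psi(M)\subset F_f$ is a parallelizable $10$-submanifold of Kervaire invariant $1$ whose boundary $\Psi(\partial M)\subset S^2\times S^9$ is smoothly a copy of $\Sigma_K^9$.

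The contradiction then comes from the smooth topology of the ambient $S^2\times D^{10}$: any smoothly embedded homotopy $9$-sphere in the simply-connected closed $11$-manifold $S^2\times S^9$ is smoothly isotopic to a standard slice $\{\mathrm{pt}\}\times S^9$ by codimension-two smooth unknotting together with Smale's $h$-cobordism theorem, hence diffeomorphic to the standard $S^9$ rather than to $\Sigma_K^9$. Equivalently, every parallelizable bounding manifold inside $F_f=S^2\times D^{10}$ of a smoothly embedded homotopy $9$-sphere has Kervaire invariant $0$ by Kervaire--Milnor theory, contradicting the value $1$ that $\Psi(M)$ inherits from $M$. The hard part will be to justify rigorously the codimension-two smooth unknotting inside $S^2\times S^9$, a specific simply-connected ambient $11$-manifold which is not itself a sphere; I expect to appeal to the classical smooth isotopy classification in high-dimensional topology via the microbundle techniques of \cite{Milnor} and $h$-cobordism-based arguments.
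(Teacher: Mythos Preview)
For part~(1), your instinct to build $h$ with $g\circ h=f$ near $0$ is exactly right, but the execution misses the two key steps the paper uses. First, the paper works on the unit sphere $S^{12}\subset\C^6\times\R$: the links $f^{-1}(0)\cap S$ and $g^{-1}(0)\cap S$ are both topological $9$-spheres sitting in $S^{12}$ with codimension~$3$, so \emph{Zeeman's unknotting theorem} (Theorem~7.1 in \cite{PL}) yields a PL---hence semialgebraic---homeomorphism $H$ of $S$ carrying one link to the other; one then arranges $f=g\circ H$ near the link as in the proof of Proposition~\ref{prop-key}. Second, $H$ is extended to all of $\C^6\times\R$ by an explicit \emph{weighted-cone} formula: the straight ray $s\mapsto sy$ through $y\in S$ is sent to the curve $s\mapsto(s^{1/2}z_1,\dots,s^{1/2}z_5,s^{1/3}z_6,sx)$ through $H(y)=(z,x)$, chosen so that $f\circ l_y(s)=s^2f(y)$ and $g\circ c_{H(y)}(s)=s^2g(H(y))$, whence $g\circ h=f$ globally. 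Your discussion of the Brieskorn monodromy over $S^2_{\sqrt a}$ is a detour: the construction never compares the positive-level fibrations of $z_1$ and $w$, only the pair $(S,\text{link})$ and its cone, so no ``absorption of discrepancy'' is needed.

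For part~(2), your route is far more involved than the paper's and contains a genuine gap. The paper simply compares the smooth boundaries $\partial F_f(r,a)$ and $\partial F_g(r,a)$, which are governed by the links $f^{-1}(0)\cap S$ (the standard $S^9$) and $g^{-1}(0)\cap S$ (the Kervaire sphere); once the boundaries are smoothly distinct, so are the Milnor fibres. Your argument instead transports the Brieskorn fibre $M$ into $F_f\cong S^2\times D^{10}$ and tries to conclude that $\Psi(\partial M)\cong\Sigma_K^9$ must be smoothly standard inside $S^2\times S^9$ via ``codimension-two smooth unknotting together with the $h$-cobordism theorem.'' But codimension~$2$ is precisely where smooth and PL unknotting fail, the complement of a $9$-sphere in $S^2\times S^9$ is not simply connected, and nothing in \cite{Milnor} or the $h$-cobordism theorem supplies such a statement; you yourself flag this as ``the hard part.'' Without it the contradiction does not close, so the argument as written is incomplete.
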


\begin{proof}\begin{flushleft}
\end{flushleft}
\begin{enumerate}
\item We begin with constructing a semialgebraic homeomorphism $H$ of $S$ such that $$H(f^{-1}(0)\cap S)=g^{-1}(0)\cap S$$
and $f=g\circ H$ on a neighbourhood of $f^{-1}(0)\cap S$ in $S$. 

Regard $g^{-1}(0)\cap S$ and $f^{-1}(0)\cap S$ as piecewise linear manifolds by semialgebraic $C^1$ triangulations (cf. Proposition I.3.13 and Remark I.3.22 in \cite{S2} for semialgebraic $C^1$ triangulations).
Then these manifolds are piecewise linear homeomorphic to a standard piecewise linear sphere, since a topological sphere of dimension greater than or equal to 5 admits a unique piecewise linear manifold structure \cite{KS}.
Regard moreover the pairs $(S,g^{-1}(0)\cap S)$ and $(S,f^{-1}(0)\cap S)$ as piecewise linear manifold pairs (by semialgebraic $C^1$ triangulations).
Then they are unknotted piecewise linear sphere pairs by the Zeeman's unknotting theorem since 
$$\dim S-\dim f^{-1}(0)\cap S=\dim S-\dim g^{-1}(0)\cap S=3$$
(see Theorem 7.1 in \cite{PL}).
Hence both of them are standard piecewise linear sphere pairs, and therefore there exists a semialgebraic homeomorphism $H$ such that $H(f^{-1}(0)\cap S)=g^{-1}(0)\cap S$.

Moreover, we can modify $H$ so that the additional condition that $f=g\circ H$ on a neighbourhood of $f^{-1}(0)\cap S$ in $S$ holds, as we did in the proof of Proposition \ref{prop-key}.

Let $y=(z,x)\in S$. Define two semialgebraic maps 
$$l_y: [0,+\infty) \to \C^6\times\R,~~~ s\mapsto sy$$ 
and 
$$c_y: [0,+\infty) \to \C^6\times\R,~~~s\mapsto (s^{1/2}z_{1},...,s^{1/2}z_{5},s^{1/3}z_{6},s x)$$
so that the image $L_y$ of $l_y$ is the half line passing through $y\in S$, and the image $C_y$ of $c_y$ is a semialgebraic curve with origin $0\in \C^6\times\R$. Note moreover that
$$f\circ l_y(s)=s^2f(y)~~~\textrm{~~~and~~~}~~~ g\circ c_y(s)=s^2g(y)$$
for $s\in [0,+\infty)$, and that the functions $f\circ l_y$ and $g\circ c_y$ are strictly increasing and converging to $+\infty$ if $f(y)\neq 0$, respectively $g(y)\neq 0$. In particular, for $y\in S$ there exists an homeomorphism $H_y$ from $L_y$ onto $C_{H(y)}$ such that $H_y \circ l_y=c_{H(y)}$ if $f(y)=0$, and $g\circ H_y=f$ on $L_y$ otherwise.

We use these homeomorphisms to define a map 
$$h:C^6\times\R \to C^6\times\R$$
so that $h_{|L_y}$ coincides with $H_y:L_y \to C_{H(y)}$, for any $y\in S$. Then $h$ preserves the origin, and $h$ is well-defined $C^6\times\R\setminus \{0\}$ is the disjoint union of the half lines $L_y\setminus \{0\}$, for $y\in S$. Moreover $h$ is a  bijection because $C^6\times\R\setminus \{0\}$ is the disjoint union of thecurves $C_y\setminus \{0\}$, for $y\in S$, and it satisfies $f=g\circ h$ by construction.
Finally $h$ is clearly a semialgebraic map continuous at $S\setminus f^{-1}(0)$, and the continuity of $h$ at $S\cap f^{-1}(0)$ follows from the condition that $f=g\circ H$ on a neighbourhood of $f^{-1}(0)\cap S$ in $S$. As a consequence $h$ is a semialgebraic homeomorphism such that $\tilde h(\mathcal F)=\mathcal F_g$, as required.

\vskip 5mm
\item The set $g^{-1}(0)\cap S$ is the Kervaire's exotic sphere (cf. \cite{FMilnor} p. 72), whereas $f^{-1}(0)\cap S$ is a standart sphere. As a consequence the sets $F_f(r,a)$ and $F_g(r,a)$, whose boundary is diffeomorphic to $f^{-1}(0)\cap S$ and $g^{-1}(0)\cap S$ respectively, cannot be diffeomorphic.
\end{enumerate}
\end{proof}

\section{Microbundles and the Nash characterisation}

We begin with recalling some notions about microbundles, as introduced by J. Milnor in \cite{Milnor}. Then we use the notion of concordance of microbundles to deduce the required isomorphisms, using \cite{KS}.

\subsection{Microbundles}

A {\it microbundle} of rank $n\in \mathbb N$ is a diagram 
$$B\overset{i}{\longrightarrow}E\overset{j}{\longrightarrow}B$$ 
where $B$ and $E$ are topological spaces, such that the composition $j\circ i$ is the identity map, and for each $b\in B$ there exist an open neighbourhood $U$ of $b$, an open neighbourhood $V$ of $i(b)$ and a homeomorphism $h:V\to U\times\R^n$, with $i(U)\subset V$ and $j(V)\subset U$, such that the map $h\circ i|_U:U\to U\times\R^n$ coincides with the map $x\to(x,0)$ and the map $p_1\circ h:V\to U$ coincides with $j|_V$, where $p_1$ denotes the projection $U\times\R^n\to U$ onto the first coordinate.

A {\it smooth microbundle} is a microbundle such that $B$ and $E$ are $C^\infty$ manifolds possibly with boundary, such that $i$ and $j$ are of class $C^\infty$ and $h$ is a diffeomorphism.
A {\it smooth structure} on a topological manifold possibly with boundary is a $C^\infty$-equivalence class of atlases on it. A smooth structure on a microbundle is the data of smooth structures on $B$ and $E$ such that the microbundle becomes a smooth microbundle.

Let $\mathfrak b_k$ denote a microbundle $B\overset{i_k}{\longrightarrow}E_k\overset{j_k}{\longrightarrow}B$, for $k\in \{1,2\}$.
We say that $\mathfrak b_1$ is {\it isomorphic} to $\mathfrak b_2$ if there exist open neighbourhoods $U_1$ of $i_1(B)$ in $E_1$ and $U_2$ of $i_2(B)$ in $E_2$ together with a homeomorphism $h:U_1\to U_2$ such that the following diagram is commutative.
$$\xymatrix{
 B\ar[r]^{i_1}\ar[d]_{\id} & U_1\ar[r]^{\ j_1|_{U_1}\ \,}\ar[d]_{h} & B \ar[d]_{\id}\\
B \ar[r]^{i_2} & U_2\ar[r]^{\ j_2|_{U_2}\ \,} & B \\
}$$
If $h$ is, moreover, an inclusion, we say $\mathfrak b_1$ is micro-identical to $\mathfrak b_2$.

  An important example is the {\it tangent microbundle} $\frak t M$ of a topological manifold $M$ possibly with boundary, defined by
$$M\stackrel{D}\to M\times M\stackrel{p_1}\to M$$
where $D$ is the diagonal map (and $p_1$ still denotes the projection onto the first factor).
If $M$ is a $C^\infty$ manifold possibly with boundary, we regard the tangent vector bundle $p:TM\to M$, denoted by $\frak T M$, as a microbundle $M\to T M\to M$ by defining the maps $M\to T M$ and $T M\to M$ to be respectively the zero cross-section and the projection.
Then $\frak T M$ is isomorphic to $\frak t M$ (Theorem 2.2 in \cite{Milnor}.
We fix such an isomorphism for each $M$.

Let $(\frak t M)\times[0,\,1]$ denote the microbundle 
$$M\times[0,\,1]\stackrel{D\times\id}\longrightarrow M\times M\times[0,\,1]\stackrel{p_1\times\id}\longrightarrow M\times[0,\,1].$$
We define similarly a microbundle bundle $\mathfrak b\times[0,\,1]$ for any microbundle $\mathfrak b$.

In order to introduce a microbundle map, let us recall what a vector bundle map is. Note that any vector bundle over a topological space is a microbundle.
Let $\mathfrak b_1$ and $\mathfrak b_2$ be vector bundles $E_1\overset{j_1}{\longrightarrow}B_1$ and $E_2\overset{j_2}{\longrightarrow}B_2$, respectively, and let $g:B_1\to B_2$ be a continuous map.
A {\it vector bundle map} $\mathfrak b_1\to\mathfrak b_2$ covering $g$ is the following commutative diagram
$$\xymatrix{
 E_1\ar[r]^{G}\ar[d]_{j_1} & E_2\ar[d]_{j_2}\\
B_1\ar[r]^{g} & B_2 \\
}$$
such that $G$ is a continuous map and for each $x\in B_1$, $G|_{{j_1}^{-1}(x)}$ is a linear morphism onto ${j_2}^{-1}_2(g(x))$.
In a similar way, we define a microbundle map as follows.
Let $\mathfrak b_k$ be a microbundle $B_k\overset{i_k}{\longrightarrow}E_k\overset{j_k}{\longrightarrow}B_k$, for $k\in\{1,2\}$, and let $g:B_1\to B_2$ be a continuous map.
A {\it microbundle map} $\mathfrak b_1\to\mathfrak b_2$ covering $g$ is the following commutative diagram
$$\xymatrix{
 B_1\ar[r]^{i_1}\ar[d]_{g} & U_1\ar[r]^{\ j_1|_{U_1}\ \,}\ar[d]_{G} & B_1 \ar[d]_{g}\\
B_2\ar[r]^{i_2} & E_2\ar[r]^{\ j_2\ \,} & B_2 \\
}$$
such that $G|_{U_1\cap j^{-1}_1(b_1)}$ is an open embedding into $j^{-1}_2(g(b_1))$ for each $b_1\in B_1$, where $U_1$ is an open neighbourhood of $i_1(B_1)$ in $E_1$ and $G$ is a continuous map.
For a homeomorphism $g:M_1\to M_2$ between topological manifolds possibly with boundary, let {\boldmath$g$}$:\frak t M_1\to\frak t M_2$ denote the microbundle map covering $g$ defined by $U_1=M_1\times M_1$ and $G(x,y)=(g(x),g(y))$.

Given a microbundle $\mathfrak b$: $B\overset{i}{\longrightarrow}E\overset{j}{\longrightarrow}B,$
a topological space $A$ and a continuous map $g:A\to B$, we define the {\it induced microbundle} $g^*\mathfrak b$ by the diagram 
$$A\to\{(a,e)\in A\times E:g(a)=j(e)\}\to A$$
where the maps are defined respectively by $a\mapsto(a,i\circ g(a))$ and $(a,e)\mapsto a$.

\subsection{Concordance}

A {\it concordance} between smooth structures $\mathfrak b_0$ and $\mathfrak b_1$ on a microbundle $\mathfrak b$ is a smooth structure $\mathfrak A$ on $\mathfrak b\times[0,\,1]$ such that the restriction $\mathfrak A|_{B\times\{k\}}$ is micro-identical to $\mathfrak b_k$, for $k\in \{0,1\}$.
A {\it stable smooth structure} on $\mathfrak b$ means a smooth structure on a microbundle $B\to E\times\R^l\to B$ for some $l\in\N$, where the maps are defined by $b\mapsto(i(b),0)$ and $(e,x)\mapsto e$.
We naturally define a {\it stable concordance} between stable smooth structures.

A {\it concordance} between two smooth structures on a topological manifold possibly with boundary $M$ is a smooth structure $\mathfrak A$ on $M\times[0,\,1]$ such that the restriction $\mathfrak A|_{M\times\{0\}}$ coincides with the first smooth structure, and $\mathfrak A|_{M\times\{1\}}$ coincides with the second.

The homotopy theorem for microbundles (cf. \S 3 in \cite{Milnor}) is a useful tool to produce isomorphisms between microbundles. We recall its statement since we will use it in the sequel. Let $A$ and $B$ be topological spaces, $\mathfrak b$: $B\overset{i}{\longrightarrow}E\overset{j}{\longrightarrow}B$ be a microbundle and $f$ and $g$ be continuous maps from $A$ to $B$.
Assume $A$ is paracompact and $f$ and $g$ are homotopic. Then the homotopy theorem for microbundles states that $f^*\mathfrak b$ and $g^*\mathfrak b$ are isomorphic.

The following proposition gives a criteria for two smooth structures on a topological manifold to be diffeomorphic.

\begin{prop}\label{prop-con}
Let $M$ be a topological manifold possibly with boundary of dimension different from $4$ and $5$ if $\partial M\not=\emptyset$, or of dimension different from $4$ if $\partial M=\emptyset$. Let $M_0$ and $M_1$ be $C^\infty$ manifolds possibly with boundary which are homeomorphic to $M$ via $h_k:M\to M_k$, for $k\in\{0,1\}$.
Assume that there exist a vector bundle $\mathfrak b:V\to M\times[0,\,1]$ and a microbundle isomorphism $H:\mathfrak b\to(\frak t M)\times[0,\,1]$ such that 
$${\bold h}_k\circ(H|_{M\times\{k\}}):\mathfrak b|_{M\times\{k\}}\to\frak T M_k$$ 
is a vector bundle map covering $h_k$, for $k\in \{0,1\}$.
Then $M_0$ and $M_1$ are $C^\infty$ diffeomorphic.
\end{prop}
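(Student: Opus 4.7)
The plan is to interpret the hypotheses of the proposition as describing a concordance between two vector-bundle reductions of the tangent microbundle $\mathfrak t M$, and then translate this microbundle-level concordance into a concordance of smooth structures on $M$ itself, via the Kirby--Siebenmann smoothing theory exposed in \cite{KS}. Once we have such a concordance, the product structure theorem will give the diffeomorphism between $M_0$ and $M_1$.

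First, I would make explicit the fact that the smooth structures on $M_0$ and $M_1$, transported back to $M$ through the homeomorphisms $h_0$ and $h_1$, furnish two vector-bundle reductions of $\mathfrak t M$: each $\mathfrak T M_k$ is a vector bundle isomorphic to $\mathfrak t M_k$ as a microbundle (by Theorem 2.2 of \cite{Milnor}), and pre-composing with $\boldsymbol h_k$ yields a microbundle isomorphism $h_k^*\mathfrak T M_k\to \mathfrak t M$ covering the identity. The datum $(\mathfrak b,H)$ then unpacks, almost tautologically, into a concordance between these two reductions: the vector bundle $\mathfrak b$ on $M\times[0,1]$, its microbundle isomorphism $H$ with $(\mathfrak t M)\times[0,1]$, and the boundary compatibility conditions expressed by $\boldsymbol h_k\circ(H|_{M\times\{k\}})$ being a vector bundle map, are precisely the content of a concordance in the sense of the paragraph preceding the proposition.

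Second, I would appeal to the main classification theorem of smoothing theory (\cite{KS}, Essay IV, together with the product structure theorem of Essay I). Under the stated dimension hypotheses, this theorem asserts that the map which sends a smooth structure on $M$ to the corresponding vector-bundle reduction of $\mathfrak t M$ induces a bijection between concordance classes of smoothings of $M$ and concordance classes of reductions of $\mathfrak t M$. The exclusion of dimension $4$ (always) and of dimension $5$ in the bordered case (so that $\partial M$ also sits outside the exceptional dimension $4$) is exactly the hypothesis that makes this correspondence work. Under this bijection, the concordance between reductions identified in the previous step produces a concordance between the two smooth structures on $M$ inherited from $M_0$ and $M_1$.

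Finally, a concordance between smoothings of a topological manifold of admissible dimension is in fact an isotopy, again by the product structure theorem of Essay I of \cite{KS}, so the two smoothings of $M$ are diffeomorphic, which means that $M_0$ and $M_1$ are $C^\infty$ diffeomorphic. The main obstacle is not a computation but a bookkeeping one: one has to check that the concordance extracted from $(\mathfrak b,H)$ matches, at each endpoint, the reduction coming from $M_k$ in the precise form required by the Kirby--Siebenmann classification (micro-identical restrictions, with the correct identification via $\boldsymbol h_k$), so that the bijection can be applied. The dimension hypothesis is exactly what is needed for this classification and for concordance to imply isotopy, which is why the same restrictions appear in the statement of the proposition.
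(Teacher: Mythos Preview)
Your proposal is correct and follows essentially the same route as the paper: interpret $(\mathfrak b,H)$ as a concordance between the vector-bundle reductions $h_k^*(\mathfrak T M_k)$ of $\mathfrak t M$, invoke the Kirby--Siebenmann classification (Essay IV, Theorem 4.3) to obtain a concordance of smoothings on $M$, and then use Concordance Implies Isotopy (Essay I, Theorem 4.1) to conclude. The only point where the paper is more explicit is the boundary case: rather than appealing globally to the classification, it first runs the argument on $\partial M$, uses collaring to extend the smoothing to a neighbourhood of the boundary, and then applies the \emph{relative} versions of Theorems 4.3 and 4.1 to $\Int M$; you may want to spell this out as well.
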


\begin{proof}
Consider the case $\partial M=\emptyset$.
The induced vector bundles $h^*_0(\frak T M_0)$ and $h^*_1(\frak T M_1)$ over $M$ give two smooth structures on $\frak t M$.
By our assumption, they are concordant.

These smooth structures can also be defined as follows. Assume $M$ be embedded in $\R^n$, and let $r:N\to M$ be a retraction of an open neighbourhood of $M$ in $\R^n$.
Let $r_k:N\to M$, for $k\in \{0,1\}$, be continuous maps homotopic to $r:N\to M$ so that $h_k\circ r_k$ are smooth.
Then $h_k^*(\frak T M_k)$ is micro-identical to $(h_k\circ r_k)^*(\frak T M_k)|_M$, for $k\in \{0,1\}$.
Apply the pull-back rule (see p.~166 in \cite{KS}).
The structures $(h_k\circ r_k)^*(\frak T M_k)|_M$ are the smooth structures on $\frak t M$ endowed from the smooth structures on $M$ given by $h_k:M\to M_k$, and the smooth structures $(h_k\circ r_k)^*(\frak T M_k)|_M$ are also the images of the smooth structures on $M$ under the pull-back rule.
Then Theorem 4.3 in Essay IV of \cite{KS} states that the two smooth structures on $M$ given by $M_0$ and $M_1$ are concordant. As a consequence, $M_0$ and $M_1$ are diffeomorphic by the Concordance Implies Isotopy Theorem 4.1 in Essay I, ibid.
Note moreover that we can choose the diffeomorphism to be isotopic to $h_1\circ h_0^{-1}$.

If $\partial M\not=\emptyset$, we can repeat the same arguments as above for $\partial M_k$, with $k\in \{0,1\}$.
Then we see that $\big(h^*_0(\frak T M_0)\big)|_{\partial M}$ and $\big(h^*_1(\frak T M_1)\big)|_{\partial M}$ are stably concordant, and we can apply Theorems 4.3 and 4.1 of \cite{KS} as above.
Hence the boundaries $\partial M_0$ and $\partial M_1$ are diffeomorphic, and moreover, modifying $h_k$, we can give a smooth structure to $\partial M$ so that $h_k|_{\partial M}:\partial M\to\partial M_k$ is a diffeomorphism, for $k\in \{0,1\}$.
By collaring, we can extend smooth structures on $\partial M$ to a neighbourhood $U$ of $\partial M$ in $M$ where $h_k$ is a $C^\infty$ embedding.
Then we can, once more, apply the above arguments to $\Int M_k$, for $k\in \{0,1\}$, because the relative versions of Theorems 4.3 and 4.1 in \cite{KS} hold.
Hence there exists a $C^\infty$ diffeomorphism $F:\Int M_0\to\Int M_1$ such that $F=h_1\circ h_0^{-1}$ on $h_0(U')\cap\Int M_0$ for a closed neighbourhood $U'$ of $\partial M$ in $M$ included in $U$.
Thus we see that $M_0$ and $M_1$ are diffeomorphic in the same way.
\end{proof}

\begin{cor}\label{cor-con}
Let $M_0$ and $M_1$ be $C^\infty$ manifolds possibly with boundary, of dimension different from $4$ or $5$ if $\partial M_k\not=\emptyset$ or of dimension different from $4$ if $\partial M_k=\emptyset$, with $k\in \{0,1\}$.
Assume that there exists an isotopy $g_s:M_0\to M_1$, with $0\le s\le1$, such that $g_0$ is a homeomorphism onto $M_1$ and $g_1$ is a $C^1$ embedding.
Then $M_0$ and $M_1$ are $C^\infty$ diffeomorphic.
\end{cor}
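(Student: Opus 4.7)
The plan is to apply Proposition \ref{prop-con}, taking $M = M_0$ as the topological manifold together with the homeomorphisms $h_0 = \id_{M_0} : M_0 \to M_0$ and $h_1 = g_0 : M_0 \to M_1$. What remains is to construct a smooth vector bundle $\mathfrak{b} : V \to M_0 \times [0,1]$ and a microbundle isomorphism $H : \mathfrak{b} \to (\mathfrak{t} M_0) \times [0,1]$ whose endpoint restrictions yield vector bundle maps covering $h_0$ and $h_1$.

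First I would appeal to classical smoothing theory to replace $g_1$ by a $C^\infty$ embedding still isotopic to $g_0$, so that we may assume $g_1$ is of class $C^\infty$. Since $M_0$ and $M_1$ have the same dimension (being homeomorphic via $g_0$), the embedding $g_1$ is a local diffeomorphism, and $dg_1 : \mathfrak{T} M_0 \to g_1^*(\mathfrak{T} M_1)$ is a smooth vector bundle isomorphism over $M_0$. I then set $V = g_1^*(\mathfrak{T} M_1) \times [0,1]$, a smooth vector bundle over $M_0 \times [0,1]$.

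To construct $H$, I view the isotopy as a continuous map $G : M_0 \times [0,1] \to M_1$, $G(x,s) = g_s(x)$, and form the pullback microbundle $G^*(\mathfrak{t} M_1)$. Its restrictions to the two ends are $g_0^*(\mathfrak{t} M_1) \cong \mathfrak{t} M_0$ (canonical, because $g_0$ is a homeomorphism) and $g_1^*(\mathfrak{t} M_1) \cong g_1^*(\mathfrak{T} M_1) = V|_{s=1}$. The homotopy theorem for microbundles, applied to the homotopies $G \simeq g_0 \circ p$ and $G \simeq g_1 \circ p$, where $p : M_0 \times [0,1] \to M_0$ is the projection, produces microbundle isomorphisms $G^*(\mathfrak{t} M_1) \cong \mathfrak{t} M_0 \times [0,1]$ and $G^*(\mathfrak{t} M_1) \cong V$. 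Composing these provides the microbundle isomorphism $H : V \to \mathfrak{t} M_0 \times [0,1]$.

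The delicate point, which I expect to be the main obstacle, is that the microbundle isomorphism $H$ furnished by the homotopy theorem is not automatically fiberwise linear at the endpoints. At $s = 0$ we need $H|_{s=0}$ (modulo the identification $\mathfrak{t} M_0 \cong \mathfrak{T} M_0$) to be a vector bundle isomorphism covering the identity, and at $s = 1$ we need $\mathbf{g}_0 \circ H|_{s=1}$ to be a vector bundle map covering $g_0$; but $g_0$ is only a homeomorphism and $\mathbf{g}_0$ is not itself fiberwise linear under the standard identification. I plan to choose $H$ so as to absorb these non-linearities: at $s = 1$, invoke the homotopy theorem for vector bundles applied to the isotopy $g_s$ to obtain a continuous fiberwise-linear isomorphism $g_0^*(\mathfrak{T} M_1) \cong g_1^*(\mathfrak{T} M_1) \cong \mathfrak{T} M_0$, and then twist $H|_{s=1}$ by a microbundle self-isomorphism of $\mathfrak{t} M_0$ so that the composition $\mathbf{g}_0 \circ H|_{s=1}$ realizes that linear identification; an analogous but simpler twist, built from $(dg_1)^{-1}$ composed with the bundle transport along the isotopy, handles $s = 0$. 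Once $\mathfrak{b}$ and $H$ satisfy the hypotheses of Proposition \ref{prop-con}, it follows that $M_0$ and $M_1$ are $C^\infty$ diffeomorphic.
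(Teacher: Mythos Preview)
Your overall strategy---reduce to Proposition \ref{prop-con} with $M=M_0$, $h_0=\id$, $h_1=g_0$, and manufacture $(\mathfrak b,H)$ from the isotopy via the homotopy theorem for microbundles---is exactly the paper's. The difference lies only in how the ``delicate point'' about endpoint linearity is resolved, and here the paper is cleaner while your argument remains incomplete.

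The paper takes $\mathfrak b=(\mathfrak T M_0)\times[0,1]$ directly (your choice $g_1^*(\mathfrak T M_1)\times[0,1]$ is isomorphic to this via $dg_1$, so this is cosmetic). The key move is then: the microbundle homotopy $\mathbf g_s:\mathfrak t M_0\to\mathfrak t M_1$ covers the varying base maps $g_s$, and the proof of Milnor's homotopy theorem straightens it to a microbundle homotopy $G_s$ covering the \emph{fixed} base map $g_0$, with $G_0=\mathbf g_0$ and $G_1$ a vector bundle map (inherited from $\mathbf g_1$, which is linear because $g_1$ is $C^1$). One then sets $H(x,s)=(\mathbf g_0^{-1}\circ G_s(x),\,s)$. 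With this formula both endpoint conditions fall out for free: $\mathbf h_0\circ H|_{s=0}=\mathbf g_0^{-1}\circ\mathbf g_0=\id$ and $\mathbf h_1\circ H|_{s=1}=\mathbf g_0\circ\mathbf g_0^{-1}\circ G_1=G_1$, both visibly vector bundle maps.

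Your proposal instead builds $H$ by composing two separate applications of the homotopy theorem and then plans to ``twist $H|_{s=1}$ by a microbundle self-isomorphism of $\mathfrak t M_0$'' (and similarly at $s=0$). The gap is that a twist applied only at one slice destroys continuity of $H$ over $[0,1]$; you would need to extend each twist to a homotopy of self-isomorphisms over the whole interval and then verify that fixing one endpoint does not undo the linearity at the other. You have not indicated how to do this, and it is precisely this bookkeeping that the paper's formula $\mathbf g_0^{-1}\circ G_s$ avoids in one stroke. I would recommend replacing your two-step pullback construction by the paper's direct one.
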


\begin{proof}
Set $M=M_0$, let $h_0: M\to M_0$ denote the identity map, and let $h_1$ denote the map $g_0:M\to M_1$. Let $\mathfrak b$ be the vector bundle $(\frak T M)\times[0,\,1]$, i.e.
$$(T M)\times[0,\,1]\to M\times[0,\,1].$$
We want to define a microbundle map $H:\mathfrak b\to(\frak t M)\times[0,\,1]$ so that the conditions in Proposition \ref{prop-con} are satisfied.
Consider the microbundle homotopy {\boldmath$g$}$_s:\frak t M_0\to\frak t M_1$ covering $g_s:M_0\to M_1$, with $0\le s\le 1$.
Then $g_0$ is bijective, but {\boldmath$g$}$_0$ is not necessarily a vector bundle map. Moreover $g_1$ is not necessarily bijective, however {\boldmath$g$}$_1$ is a vector bundle map.
Our goal is to modify {\boldmath$g$}$_s$ so that $g_s$ is bijective for any $s\in[0,\,1]$ and {\boldmath$g$}$_1$ remains a vector bundle map.

By the homotopy theorem for microbundles (or more precisely its proof in \S 6 of \cite{Milnor}), we have a microbundle homotopy $G_s:\frak t M_0\to\frak t M_1$, with $ 0\le s\le1$, covering $g_0$ such that $G_0=${\boldmath$g$}$_0$ and $G_1:\frak T M_0\to\frak T M_1$ is a vector bundle map, where we identify $\frak t M_k$ with $\frak T M_k$ by Theorem 2.2 in \cite{Milnor}, for $k\in \{0,1\}$.
Set
$$
H(x,s)=(\text{\boldmath$g$}^{-1}_0\circ G_s(x),s)\quad\text{for}\ (x,s)\in(T M)\times[0,\,1].
$$
Then $H$ is a microbundle map from $\mathfrak b$ to $(\frak t M)\times[0,\,1]$ such that {\boldmath$h$}$_0\circ(H|_{M\times\{0\}})$ is the identity map. Moreover 
$${\bold h}_1\circ(H|_{M\times\{1\}})={\bold h}_1\circ\,{\bold g}_0^{-1}\circ G_1=G_1$$
and hence {\boldmath$h$}$_0\circ(H|_{M\times\{0\}})$ and {\boldmath$h$}$_1\circ(\pi|_{M\times\{1\}})$ are vector bundle maps.
Therefore $\mathfrak b$ and $H$ satisfy the conditions in Proposition \ref{prop-con}, and therefore $M_0$ and $M_1$ are $C^\infty$ diffeomorphic.
\end{proof}

We are now equipped to handle the proof of Theorem \ref{main}.(2). We will prove this by applying Corollary \ref{cor-con}. To this aim, we will make use of Proposition \ref{prop-C1} in order to convert the weak continuity assumption on $\tilde h$ into a $\wR$-$C^1$ regularity first, and finally into a $C^1$ regularity over $\R$.


\begin{proof}[Proof of Theorem \ref{main}.(2)]
First, note that there exist a sufficiently small $r_0\in \R_+^*$ and a semialgebraic neighborhood $U$ of $(0,r_0]\times \{0\}$ in $(0,+\infty)^2$ such that the projection map 
$$p:\cup_{0<a<r\in \R} F_f(r,a)\times\{(r,a)\} \to \R^2$$ 
given by $(x,r,a)\mapsto (r,a)$ is Nash trivial over $U$, as in the proof of Lemma \ref{lem-triv}.
Hence there exist $l\in\N$ such that the projection map 
$$\cup_{r\in(0,r_0]} F_f(r,r^l)\times\{r\} \to (0,\,r_0]$$
given by $(x,r)\mapsto r$ is Nash trivial.
As a consequence, there exist a Nash diffeomorphism 
$$H_f: F_f(r_0,r_0^l)\times(0,\,r_0]\to \cup_{r\in(0,\,r_0]} F_f(r,r^l)$$ 
such that $H_f( F_f(r_0,r_0^l)  \times\{r\})=F_f(r,r^l)$ and $H_f(\cdot,r_0)=\id$.
In the same way, there exist a Nash diffeomorphism 
$$H_g: F_g(r_0^{l_1},r_0^l) \times (0,\,r_0] \to \cup_{r\in(0,\,r_0]} F_g(r^{l_1},r^l)$$ 
such that $H_g( F_g(r_0^{l_1},r_0^l)  \times\{r\})=F_g(r^{l_1},r^l)$ and $H_g(\cdot,r_0)=\id$, for some small $l_1\in\mathbf Q_+^*$, by enlarging $l$ and shrinking $r_0$ if necessary.
Here we can choose $l_1\in\mathbf Q_+^*$ so that 
$$h(\{x\in\R^n:|x|\le r\})\subset\{x\in\R^n:|x|\le r^{l_1}\}$$
for any $r\in[0,\,r_0]$ by semialgebraicity of $h$.

Define a semialgebraic isotopy 
$$g_s:  F_f(r_0,r_0^l)\to F_g(r_0^{l_1},r_0^l)$$
with $ 0\le s\le1/2$ (rather than $0\le s\le1$) using Remark \ref{rmk-add}.(1), so that $g_0$ is a homeomorphism onto $F_g(r_0^{l_1}, r_0^l)$ and $g_{1/2}(x)=h(x)$ for $x\in F_f(r_0,r_0^l)$.
Note that 
$$h\circ H_f(x,r_0)=H_g(g_{1/2}(x),r_0)$$
for $x\in F_f(r_0,r_0^l)$.
Next, extend $g_s$ to
$$g_s: F_f(r_0,r_0^l) \to F_g(r_0^{l_1},r_0^l),$$
using the continuous semialgebraic embeddings 
$$
h|_{F_f((2-2s)r_0,(2-2s)^lr_0^l)}: F_f((2-2s)r_0,(2-2s)^lr_0^l) \to F_g((2-2s)^{l_1}r_0^{l_1},(2-2s)^lr_0^l)$$
where $s\in[1/2,\,1)$, so that 
$$
h\circ H_f(x,(2-2s)r_0)=H_g(g_s(x),(2-2s)r_0)$$
for $s\in[1/2,\,1)$. Then $g_s$ is similar to an isotopy, except that the parameter $s$ moves only in $[0,1)$.

In order to achieve the proof, we want to apply Corollary \ref{cor-con} to $g_s$, with $0\le s\le s_0$, for some $s_0\in(0,\,1)$ close to 1. To this aim, it suffices to see that $g_s$ is a $C^1$ embedding for $s\in(0,\,1)$ sufficiently close to 1.

Extend the semialgebraic maps $g_s$ to $\wR$-semialgebraic maps 
$$\tilde g_{\lambda}:\tilde F_{\tilde f}(\tilde r_0,\tilde r_0^l) \to \tilde F_{\tilde g}(\tilde r_0^{l_1},\tilde r_0^l)$$
where $\lambda \in[0,\,1)_{\tilde\R}$.

Using the assumption that $\tilde h|_{\mathcal F_f}$ is a weakly continuous homeomorphism, together with Proposition \ref{prop-C1}, we obtain that $\tilde h$ is of class $\wR$-$C^1$. Then the restriction of $\tilde h$ to $\tilde F_{\tilde f}(\rho,\rho^l)$ is an $\tilde\R$-$C^1$ embedding into $\tilde F_{\tilde g}(\rho^{l_1},\rho^l)$ for $\rho=t^{1/l}$, since $\tilde F_{\tilde f}(t^{1/l},t)$ is included in $\mathcal F_f$. As a consequence $\tilde g_{\lambda}$ is an $\tilde\R$-$C^1$ embedding for $\lambda_0 \in\tilde\R_+^*$ such that $(2-2\lambda_0)\tilde r_0=t^{1/l}$.

Remark that $\lambda_0$ belongs to $[1/2,\,1)_{\tilde\R}$, so that the subset of $[1/2,\,1)_{\tilde\R}$ defined by
$$\{\lambda \in [1/2,\,1)_{\tilde\R}:  \textrm{~~$\tilde g_{\lambda}$ is not a $\tilde\R$-$C^1$ embedding}\}$$
is not empty. Moreover this set is the $\tilde\R$-extension of the semialgebraic subset of $[1/2,\,1)$ defined by
$$\{s\in [1/2,\,1): \textrm{~~$g_s$ is not a $C^1$ embedding}\},$$
because the former is described by the $\tilde\R$-extensions of the polynomials which describe the latter.
Therefore $g_s$ is a $C^1$ embedding for $s$ close to 1.
\end{proof}


\enddocument